\newtheorem{thm}{Theorem}[section]
\newtheorem{lem}[thm]{Lemma}
\newtheorem{prop}[thm]{Proposition}
\newtheorem{cor}[thm]{Corollary}
\theoremstyle{definition}
\newtheorem{defn}[thm]{Definition}
\newtheorem{ex}[thm]{Example}
\theoremstyle{remark}
\newtheorem{rem}[thm]{Remark}
\DeclareMathOperator{\cov}{cov} 
\DeclareMathOperator{\TE}{TE} 
\newcommand{\E}{\mathbb{E}}
\newcommand{\N}{\mathbb{N}}
\newcommand{\R}{\mathbb{R}}
\newcommand{\T}{\mathbb{T}}
\newcommand{\indic}{{1\!\!1}}
\newcommand{\bs}{\boldsymbol}
\newcommand{\mtp}{{\rm MTP}_{2}}
\newcommand{\cip}{\mbox{\,$\perp\!\!\!\perp$\,}}
\newcommand{\indep}{\cip}
\newcommand{\im}{{\mathcal{IM}}}
\title[Dependence in elliptical partial correlation graphs]{Dependence in elliptical partial correlation graphs}
\author[]{David Rossell}
\address{Department of Economics and Business, Universitat Pompeu Fabra, Barcelona, Spain}
\email{david.rossell@upf.edu}
\thanks{}
\author[]{Piotr Zwiernik}
\address{Department of Economics and Business, Universitat Pompeu Fabra, Barcelona, Spain}
\email{piotr.zwiernik@upf.edu}
\thanks{DR and PZ were supported from the Spanish Government grants (RYC-2015-18544,RYC-2017-22544,PGC2018-101643-B-I00,EUR2020-112096), and Ayudas Fundaci\'on BBVA a Equipos de Investigaci\'on Cientifica 2017. DR was partially supported by NIH grant R01 CA158113-01 and Europa Excelencia EUR2020-112096. }
\keywords{Partial correlation graph, elliptical distribution, transelliptical distribution, copula, graphical models, multivariate total positivity.}
\subjclass[2010]{62H05,62H20,62H22}
\date{\today}                                            % Activate to display a given date or no date
\begin{document}
%\maketitle
\begin{abstract}

The Gaussian model {equips} strong properties that facilitate studying and interpreting graphical models. Specifically it reduces conditional independence and the study of positive association to determining partial correlations and their signs. When Gaussianity does not hold partial correlation graphs are a useful relaxation of graphical models, but it is not clear what information they contain (besides the obvious lack of linear association).
We study elliptical and transelliptical distributions as middle-ground between the Gaussian and other families that are more flexible but either do not embed strong properties or do not lead to simple interpretation.
We characterize the meaning of zero partial correlations in the elliptical family {and transelliptical copula models} and show that it retains much of the dependence structure from the Gaussian case. Regarding positive dependence, we prove impossibility results to learn (trans)elliptical graphical models, including that an elliptical distribution that is multivariate totally positive of order two for all dimensions must be essentially Gaussian. We then show how to interpret positive partial correlations as a relaxation, and obtain important properties related to faithfulness and Simpson's paradox.
We illustrate the transelliptical model potential to study tail dependence in S\&P500 data, and of positivity to {improve} regularized inference.

%We study (trans)elliptical distributions are used as a relaxation of graphical models typically defined through conditional independences. Since elliptical distributions allow for conditional independences only in the Gaussian case, it is not clear what information is contained in the partial correlation graph (apart from the obvious lack of linear association). This is even more obscured for transelliptical distributions. Motivated by this problem, in this paper we study the dependence structure in elliptical and transelliptical distributions. We provide a characterization of vanishing partial correlations that goes far beyond what has been known in the literature. In our analysis of positive dependence structure, we prove a number of impossibility results for elliptical distributions. We show that in high-dimensions an elliptical distribution that is multivariate totally positive of order two must be essentially Gaussian. 
\end{abstract}

\maketitle
%\tableofcontents

%\input intro

\section{Introduction}

Several papers study graphical models for elliptical and transelliptical distributions in the standard \citep{finegold2009robust,vogel2011elliptical} and high-dimensional settings \citep{barber2018rocket,bilodeau2014graphical,liu2012transelliptical,zhao2014calibrated}. These models found applications in many fields, such as finance and biology \citep{behrouzi2019detecting,stephens2013unified,vinciotti2013robust}, and (implicitly) wherever Gaussian graphical models were used but the underlying distribution is likely to depart from normality, e.g. be heavy-tailed or asymmetric. In the elliptical setting the usual definition of graphical models mimics the Gaussian case --- the model is given by zeros in the inverse covariance, or equivalently, by vanishing partial correlations. Despite this being a reasonable relaxation, the {corresponding} partial correlation graph (PG) cannot be interpreted in terms of conditional independence, since outside of the normal case no elliptical distributions allow for conditional independence (c.f. Proposition~\ref{prop:nocondind}). It is therefore unclear what type of dependence information is embedded by the PG.

For general distributions partial correlations inform only about linear dependence. Missing edges in the PG must then be interpreted with great care and, in some cases, they can fail to capture interesting dependence information. For example, in an aircraft data set from \cite{bowman1993density}, we can model dependence between the speed of an airplane and its wingspan. Although the sample correlation is negligible, more flexible dependence tests reveal that the variables are strongly related; see e.g. \cite{szekely2009brownian}. The reason is that for very fast (military) airplanes there is a negative dependence between speed and wingspan, while this dependence is positive for regular aircrafts.

The main theme of this paper is that for (trans)elliptical distributions there is {significantly} more information in the partial correlation graph beyond presence/absence of linear dependence. We introduce definitions and notation to aid the exposition.
\begin{defn}
A random vector $X=(X_1,\ldots,X_d)$ has an elliptical distribution  if there exists $\mu\in \R^d$ and a positive semi-definite matrix $\Sigma$ such that the  characteristic function of $X$ is of the form $\bs t\mapsto \phi(\bs t^T \Sigma \bs t)\exp(i \mu^T\bs t)$ for some $\phi:[0,\infty)\to \R$. We write $X\sim E(\mu,\Sigma)$ making $\phi$ in this notation implicit.
\label{def:elliptical_distrib}
\end{defn}
Important examples include the multivariate normal, Laplace and multivariate t-distributions.
Elliptical graphical models have been extended to transelliptical distributions (also known as elliptical copulas or meta-elliptical distributions,  \cite{fang2002meta,liu2012transelliptical}).

\begin{defn}
A random vector $Y$ has a transelliptical distribution with parameters $(\mu,\Sigma)$ if $f(Y):=(f_1(Y_1),\ldots,f_d(Y_d))\sim E(\mu,\Sigma)$
%there exist \emph{strictly increasing} functions $f_i$ such that  
{for some fixed strictly increasing functions $f_1,\ldots,f_d$.}
We write $Y \sim \TE(\mu, \Sigma)$, making $f$ in this notation implicit.
\label{def:transelliptical_distrib}
\end{defn}
Here the additional challenge is that $f$ is unknown. An  elegant approach to learning partial correlations relies on directly estimating the correlation matrix of $f(Y)$ without actually learning $f$; see \cite{liu2012transelliptical,lindskog2003kendall}, and then proceed as in the elliptical case (Section~\ref{ssec:rankcorrel}).

Throughout {we assume that $\Sigma$ is positive definite and}  denote $K= \Sigma^{-1}$, the set of vertices by $V=\{1,\ldots,d\}$, by $X_{(i)}$ the $d-1$ vector obtained by removing $X_i$ from $X$, and by $X_{(ij)}$ {the $d-2$ vector obtained by} removing $(X_i,X_j)$ from $X$.
Given $I,J\subseteq V$ denote by $X_I$ and $\mu_I$ the subvectors of $X$ and $\mu$ with coordinates in $I$ and by $\Sigma_{IJ}$ the corresponding subblock of $\Sigma$ with rows in $I$ and columns in $J$. 
The partial correlation between $(X_i,X_j)$ is 
\begin{equation}\label{eq:partialcor}
\rho_{ij\cdot V\setminus \{i,j\}}\;=\;-\frac{K_{ij}}{\sqrt{K_{ii}K_{jj}}}\quad\qquad\mbox{for all }i,j\in V	
\end{equation}
and so $\rho_{ij\cdot V\setminus \{i,j\}}=0$ if and only if $K_{ij}=0$.
Finally, we denote that $(X_i,X_j)$ are independent by $X_i \indep X_j$.

The usual interpretation of PGs in elliptical distributions is that --- since the conditional expectation $\E(X_i|X_{(i)})$ is linear in $X_{(i)}$ and the conditional correlation is equal to the partial correlation --- the condition $\rho_{ij \cdot V \setminus \{i,j\}}=0$ implies that $(X_i,X_j)$ are conditionally uncorrelated. That is, zero partial correlation implies zero conditional correlation. As we show in Theorem~\ref{th:anyf} something \emph{much} stronger is true. It is possible to fully characterize the PG in elliptical distributions: 
\begin{quote}
	The partial correlation $\rho_{ij \cdot V \setminus \{i,j\}}=0$ if and only if $\cov(g(X_i),X_j|X_{(ij)})=0$ for \textbf{every} function $g$ for which the covariance exists. 
\end{quote}
%To see why this may be important consider an extreme illustration when $X_1 \sim N(0,1)$ and $X_2=X_1^2$. Then $\rho_{12}=0$ but taking $f(x)=x^2$ we get ${\rm corr}(f(X_1),X_2)=1$. Theorem~\ref{th:anyf} shows that such an example is not possible if $(X_1,X_2)$ has an elliptical distribution.

A similar characterization extends to transelliptical distributions. The usual interpretation of $\rho_{ij \cdot V\setminus \{i,j\}}=0$ is that $f_i(Y_i), f_j(Y_j)$ are conditionally uncorrelated given $f_{(ij)}(Y)$, which is not very interesting, since $f$ is unknown. We show in Theorem~\ref{th:somef} that equivalently $\cov(f_i(Y_i),g(Y_j)|Y_{(ij)})=0$ for any $g$, provided the covariance exists. In particular, $\cov(f_i(Y_i),Y_j|Y_{(ij)})=0$, a more explicit dependence information in terms of $Y$.
{We also show in Proposition \ref{prop:partialcorr_condKendall} yet another equivalent interpretation, namely that $\rho_{ij \cdot V\setminus \{i,j\}}=0$ if and only if conditional Kendall's tau correlation between $g(Y_i)$ and $h(Y_j)$ being zero for all strictly increasing $g,h$.}

These findings are practically relevant. Recall that two variables $X_i$ and $X_j$ with general distribution are independent if and only if for all $L^2(\R)$ functions $g,h$ we have $\cov(g(X_i),h(X_j))=0$; see e.g. \cite[page 136]{feller1971ipt}.
That is, $X_i\indep X_j$ if and only if there is no way to transform $X_i$ and $X_j$ such that the new variables are correlated. Our characterization of $\rho_{ij \cdot V \setminus \{i,j\}}=0$ has an analogous interpretation, in elliptical families there is no way to transform $X_i$ such that the new variable is correlated with $X_j$.
{This rules out situations like the aircraft example above where speed is correlated with a non-linear function of wingspan.}

Further, interpreting $\rho_{ij \cdot V \setminus \{i,j\}}=0$ can be important in applications,
{given that elliptical and transelliptical models are a popular tool}
%. In particular (trans)elliptical models are often used 
to capture second-order or tail dependencies. Even though $\rho_{ij \cdot V \setminus \{i,j\}}=0$ such dependencies can be practically significant. As an example, let $X \sim E(0, \Sigma)$ and consider $\theta_{ij}={\rm corr}(X_i^2, X_j^2)$ as a simple measure of marginal tail (or second-order) dependence. {In  the  special sub-family of elliptical distributions defined by scale mixtures of normals}
%{that is, when $X$ satisfies the stochastic representation} $X= \mu + \tau^{-1/2} Z$ with $Z \sim N(0,\Sigma)$, $\tau>0$, {and $Z\indep \tau$} 
(c.f. Section~\ref{sec:review}), it is possible to show
\begin{align}
	\theta_{ij}={\rm corr}(X_i^2,X_j^2)\;=\; \lambda+(1-\lambda) \rho_{ij}^2,
\qquad\;\;\; \lambda\;=\;\frac{{\rm var}(\tfrac{1}{\tau})}{{\rm var}(\tfrac{1}{\tau})+2\E(\tfrac{1}{\tau^2})}\in [0,1],
\label{eq:corrsq}
\end{align}
where $\lambda=0$ if and only if $X$ is Gaussian. This measure is minimized for $\rho_{ij}=0$, then ${\rm corr}(X_i^2,X_j^2)=\lambda$, which can be non-negligible, e.g. $\lambda=1/(k-1)$ for the t-distribution with $k>4$ degrees of freedom. Figure \ref{fig:dependence_vs_partialcorr} shows this quantity and, for comparison, also the normalized mutual information (a standard measure of deviation from independence). Both measures converge to zero as $k\to\infty$ but this convergence is slow. Similarly, one may measure conditional tail dependence via ${\theta}_{ij \cdot V\setminus \{i,j\}}=$
\begin{align}
 {\rm corr}\Big((X_i - \E(X_i \mid X_{(ij)}))^2, (X_j - \E(X_j \mid X_{(ij)}))^2 \Big| X_{(ij)}\Big)\;=\; 
\lambda+(1-\lambda) \rho_{ij \cdot V \setminus \{i,j\} }^2,
\label{eq:corrsq_partial}
\end{align}
where the right-hand side follows from Proposition \ref{prop:margcond} below. When $\rho_{ij \cdot V\setminus \{i,j\}}=0$ the conditional tail dependence is $\lambda$. See Section~\ref{sec:examples} for further discussion and an illustration on stock market data.

\begin{figure}
\begin{center}
\includegraphics[scale=.7]{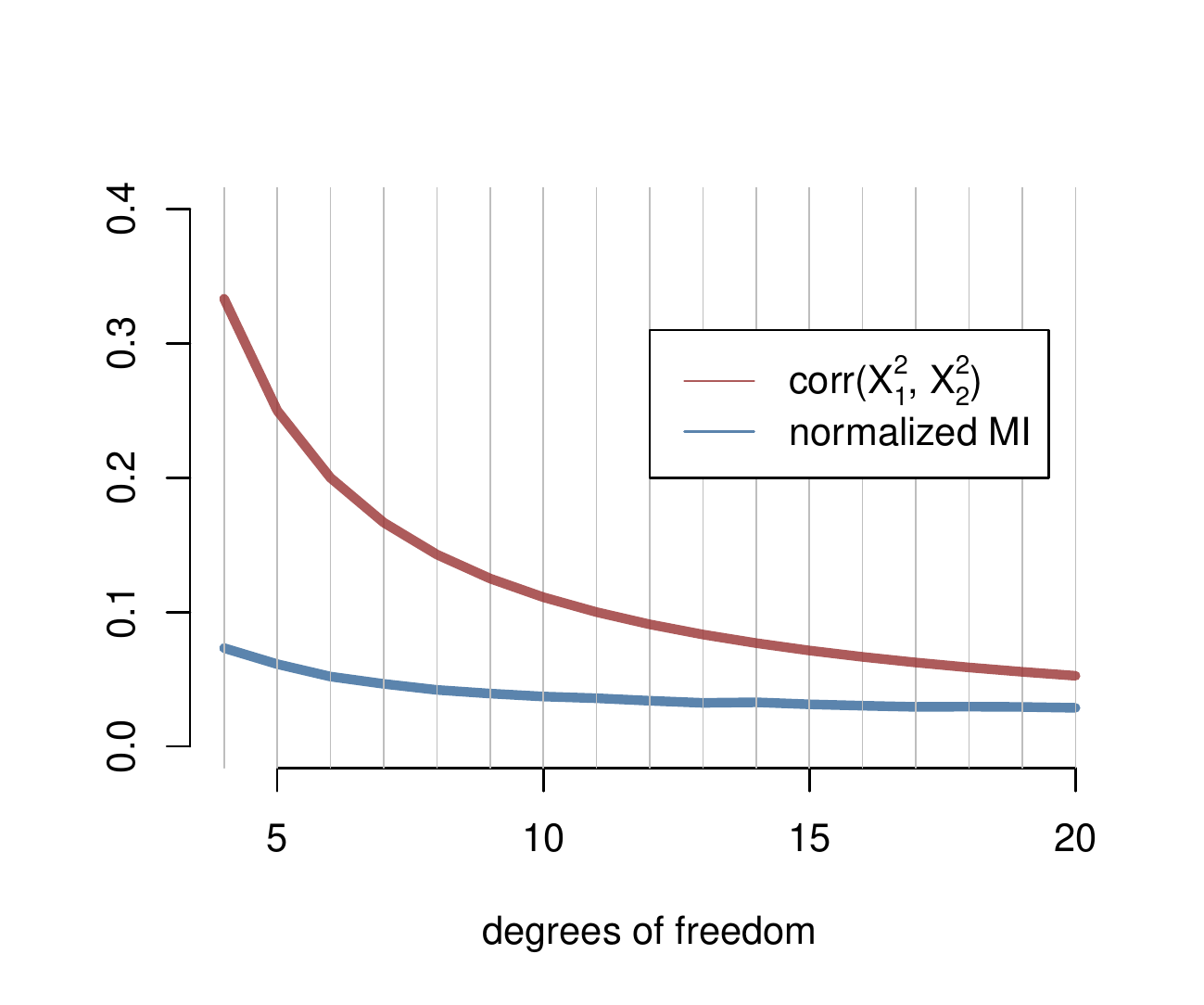}
\end{center}
\caption{${\rm corr} (X_i^2,X_j^2)$ for a multivariate t-distribution with $k$ degrees of freedom in the $\rho_{ij}=0$ case}
\label{fig:dependence_vs_partialcorr}
\end{figure}

Our other main contributions relate to PGs in settings where one wishes to study positive forms of association. Two standard ways to define positive dependence are via the notions of {\it multivariate total positivity of order two} ($\mtp$) and {\it conditionally increasing} (CI, Section~\ref{sec:TP}). Although these concepts are different, in the Gaussian case they are equivalent and reduce to constraining partial correlations to be non-negative. It is less clear how to interpret these concepts in general elliptical families. A first contribution is showing several impossibility results: within the elliptical family with at least one partial correlation zero there exist no conditionally increasing distributions (other than the Normal) implying the same result for $\mtp$ distributions. That is, if one wants remove edges in the PG with an additional positive dependence structure, one cannot rely on the standard notions of positive dependence. 

A natural relaxation is to learn a PG under the constraint that $\rho_{ij \cdot V \setminus \{i,j\}} \geq 0$, as proposed by \cite{agrawal2019covariance}. We refer to this strategy as {\it positive partial correlation graphs} (PPG). We contribute to understanding how should one interpret missing edges in the PPG, and to characterizing embedded positivity properties such as the positive correlation of each $X_i$ with any increasing function of the vector $X$. In Section~\ref{sec:examples} we illustrate how positivity constraints induce a type of regularization that can help improve inference relative to other standard forms of regularization, such as graphical LASSO, specifically attaining a higher log-likelihood with a sparser graph. This is meant as a testimony that our theoretical results have practical relevance. For further examples in risk modelling see \cite{abdous2005dependence,ruschendorf2017var}, and in psychology see \cite{epskamp2018tutorial,lauritzen2019total}, for example.

This paper also contributes to recent research aimed at understanding multivariate total positivity in a wide variety of contexts; see, for example, \cite{fallat:etal:17,lauritzen2019maximum,lauritzen2019total,robeva2018maximum,slawski2015estimation}. We provide in Theorem~\ref{th:abdous} a complete characterization of elliptical $\mtp$ distributions in terms of their density generator. In Theorem~\ref{thm:necessary_mtp2} this characterisation is used to show a remarkable result: a density generator may induce a $d$-variate $\mtp$ distribution for each $d\geq 2$ if and only if the underlying distribution is essentially Gaussian.

The paper is organized as follows. In Section~\ref{sec:review} we review basic results on elliptical distributions. In Section~\ref{sec:graphical} we characterize partial correlation graphs for elliptical and transelliptical distributions, giving a refined understanding of their encoded dependence information, {and how one should interpret zero partial correlations}. In Section~\ref{sec:TP} we study positive elliptical distributions and their alternative characterizations. In Section~\ref{sec:examples} we illustrate our main results with examples.

{In the rest of the article we employ the convention where the letter $X$ is reserved for elliptical random variables, $Y$ is reserved for transelliptical random variables, $Z$ is reserved for Gaussian random variables and $W$ for general random variables. The letter $f$ is reserved for the monotone functions defining transelliptical distributions as in Definition \ref{def:transelliptical_distrib}, $g$ and $h$ denote general functions, and $p$ denotes probability density functions.}

\section{Elliptical distributions}\label{sec:review}

We review some elliptical distribution results; for more information see  \cite{fang2018symmetric} or \cite{kelker1970distribution}, for example.

\subsection{Stochastic representation}

If $X\sim E(\mu,\Sigma)$ then $X$ admits the representation
\begin{equation}\label{eq:stoch}
	X\;=\;\mu+ \xi\,\cdot \Sigma^{1/2}\cdot U,
\end{equation}
where $\Sigma^{1/2}$ denotes the square root of {the  positive-definite} $\Sigma$, $\xi {\in \mathbb{R}^+}$ is {an arbitrary} random variable, $U \in \R^d$ is uniformly distributed on the unit $(d-1)$-dimensional sphere, and $\xi\indep U$. 
Elementary arguments show that, if $\E\xi<\infty$, then $\E X=\mu$ and {if $\E\xi^2<\infty$ then} ${\rm cov}(X)\;=\;\tfrac{\E(\xi^2)}{d}\cdot \Sigma$.  {Throughout this paper we assume that $0<\E \xi^2<\infty$. }

There is a useful representation equivalent to (\ref{eq:stoch}) in terms of normal variables.
Let $D^2 \sim \chi_d^2$ {with  $D^2\indep U$}. Then $Z= \sqrt{D^2}\, \Sigma^{1/2}\, U$ {is a mean  zero Gaussian variable with covariance $\Sigma$}. From (\ref{eq:stoch}) it follows that
\begin{align}\label{eq:stoch2}
  X=  \mu + \frac{1}{\sqrt{\tau}} \cdot Z,
\end{align}
where $\tau= D^2/\xi^2$ and $Z \sim N(0,\Sigma)$. {In general $\tau$ is not independent of $Z$.} The {special} case $\tau \indep Z$ corresponds to the \emph{scale mixture of normals} sub-family, which includes most popular elliptical distributions. The normal distribution corresponds to $\tau\equiv 1$. If $\tau \sim \chi^2_k/k$ for $k>2$ then $X$ has a multivariate t-distribution {with parameter $k$}. If $k=1$ we get the multivariate Cauchy, and if $\tau\sim {\rm Exp}(1)$ the multivariate Laplace distribution. 
{Equivalently, scale mixture of normals can be defined as the marginal distribution of $X$ associated to $(X,\tau)$ when $X \mid \tau \sim N(\mu,\tau^{-1/2} \Sigma)$, and $\tau \in \mathbb{R}^+$ follows an arbitrary continuous distribution.}

The elliptical family is closed under taking margins and under conditioning.

\begin{prop}\label{prop:margcond}
Let $X=(X_I,X_J) \sim E(\mu,\Sigma)$ be any split of $X$ into subvectors $X_I$ and $X_J$. Then
\begin{enumerate}
	\item [(i)] $X_I\sim E(\mu_I,\Sigma_{II})$,
	\item [(ii)] $X_I \mid X_J=x_J \sim E(\mu_{I|J},\Sigma_{II\cdot J})$, where $\mu_{I|J}:=\mu_I+\Sigma_{IJ}\Sigma_{JJ}^{-1}(x_J-\mu_J)$ and $$\Sigma_{II\cdot J}\;:=\;\Sigma_{II}-\Sigma_{IJ}\Sigma_{JJ}^{-1}\Sigma_{JI}\;=\; K_{II}^{-1}.$$
\end{enumerate}
\end{prop}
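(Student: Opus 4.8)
The plan is to derive both parts from the stochastic representation (\ref{eq:stoch}), which keeps the argument valid even when $\xi$ (hence $X$) has no density. Part (i) is read off the characteristic function: restricting $\bs t\mapsto\phi(\bs t^T\Sigma\bs t)\exp(i\mu^T\bs t)$ to arguments $\bs t$ supported on the coordinates in $I$ turns it into $\bs t_I\mapsto\phi(\bs t_I^T\Sigma_{II}\bs t_I)\exp(i\mu_I^T\bs t_I)$, the characteristic function of $E(\mu_I,\Sigma_{II})$ with the same generator $\phi$. The same computation applied to an affine map shows, more generally, that $AX+b\sim E(A\mu+b,\,A\Sigma A^T)$, i.e.\ the elliptical family is closed under affine transformations; I will use this in part (ii).

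For part (ii) I first reduce to a block-diagonal scale matrix. Assume $\mu=0$ (the location is put back by translation at the end) and set $\tilde X_I:=X_I-\Sigma_{IJ}\Sigma_{JJ}^{-1}X_J$. Since $(\tilde X_I,X_J)$ is an invertible block-triangular affine image of $X$, closure under affine maps gives $(\tilde X_I,X_J)\sim E(0,\mathrm{diag}(\Sigma_{II\cdot J},\Sigma_{JJ}))$: the cross block cancels exactly as in the Gaussian computation, with $\Sigma_{II\cdot J}=\Sigma_{II}-\Sigma_{IJ}\Sigma_{JJ}^{-1}\Sigma_{JI}$, and $\Sigma_{II\cdot J}=K_{II}^{-1}$ is the standard Schur-complement identity for the inverse of a block matrix; positive definiteness of $\Sigma_{II\cdot J}$ is inherited from $\Sigma\succ0$. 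Conditioning on $X_J=x_J$ thus reduces to conditioning $\tilde X_I$ on $X_J=x_J$ and then adding the constant $\Sigma_{IJ}\Sigma_{JJ}^{-1}x_J$, so it suffices to handle an elliptical pair with block-diagonal scale.

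For such a pair $(W_I,W_J)\sim E(0,\mathrm{diag}(A_{II},A_{JJ}))$ with $A_{II},A_{JJ}\succ0$, write $W_I=\xi\,A_{II}^{1/2}U_I$ and $W_J=\xi\,A_{JJ}^{1/2}U_J$ via (\ref{eq:stoch}), and invoke the classical spherical decomposition $U_I=\sqrt B\,V$, $U_J=\sqrt{1-B}\,V'$, where $B=\|U_I\|^2\sim\mathrm{Beta}(|I|/2,|J|/2)$, $V$ is uniform on $S^{|I|-1}$, $V'$ is uniform on $S^{|J|-1}$, and $\xi,B,V,V'$ are mutually independent. Since $W_J$ is a measurable function of $(\xi,B,V')$ alone, $V$ is independent of $W_J$ and remains uniform on $S^{|I|-1}$ and independent of $(\xi,B)$ after conditioning on $W_J=x_J$; writing $R_{x_J}$ for a variable with the conditional law of $\xi\sqrt B$ given $W_J=x_J$, we obtain $W_I\mid W_J=x_J\stackrel{d}{=}R_{x_J}\,A_{II}^{1/2}V$, which is precisely the representation (\ref{eq:stoch}) of $E(0,A_{II})$. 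Applying this with $A_{II}=\Sigma_{II\cdot J}$, undoing the shift, and restoring $\mu$ yields $X_I\mid X_J=x_J\sim E(\mu_{I|J},\Sigma_{II\cdot J})$ with $\mu_{I|J}=\mu_I+\Sigma_{IJ}\Sigma_{JJ}^{-1}(x_J-\mu_J)$.

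I expect the main obstacle to be the general, not necessarily absolutely continuous, case. When $X$ has a density proportional to $g\big((x-\mu)^T\Sigma^{-1}(x-\mu)\big)$, completing the square via $(x-\mu)^T\Sigma^{-1}(x-\mu)=(x_I-\mu_{I|J})^T\Sigma_{II\cdot J}^{-1}(x_I-\mu_{I|J})+(x_J-\mu_J)^T\Sigma_{JJ}^{-1}(x_J-\mu_J)$ immediately exhibits the conditional density as an elliptical density with centre $\mu_{I|J}$, scale $\Sigma_{II\cdot J}$ and shifted generator, and (ii) is one line. Without a density one must argue through the stochastic representation as above, where the delicate points are the rigorous use of the spherical decomposition and the claim that $V$ stays uniform and independent of $R_{x_J}$ under conditioning, together with checking that $\{X_J=x_J\}$ is a legitimate conditioning event — the law of $X_J$ charges the single point $\mu_J$ only if $\xi$ has an atom at $0$, in which case the conclusion holds there with the degenerate generator $R\equiv0$.
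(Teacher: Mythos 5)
Your argument is correct. The paper does not prove Proposition~\ref{prop:margcond} itself but defers to \cite[Theorem~2.18]{fang2018symmetric}, and your proof is essentially a self-contained reconstruction of that standard argument: closure under affine maps via the characteristic function, reduction to a block-diagonal scale matrix by the Schur-complement change of variables, and the beta decomposition $U_I=\sqrt{B}\,V$, $U_J=\sqrt{1-B}\,V'$ of the uniform distribution on the sphere, which shows that $V$ stays uniform and independent of the conditional radial part $R_{x_J}$. The only point that deserves the care you already flag is that ``$X_I\mid X_J=x_J$'' must be read as a regular conditional distribution (defined for a.e.\ $x_J$), since $X_J$ need not admit a density; with that understood, your independence argument for $V$ versus $(\xi,B,V')$ gives exactly the representation \eqref{eq:stoch} of $E(\mu_{I|J},\Sigma_{II\cdot J})$.
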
 
For the proof see, for example, \cite[Theorem~2.18]{fang2018symmetric}. The  conditional mean $\mu_{I|J}$ has the same form as in the Gaussian case, where $\Sigma$ above can be replaced by ${\rm cov}(X)$ (a scalar multiple of $\Sigma$). Moreover, the conditional correlations $\mbox{corr}(X_i,X_j \mid X_{(ij)})$ are the normalized entries of $K=\Sigma^{-1}$ (the partial correlations $\rho_{ij \cdot V \setminus \{i,j\}}$), and do not depend on the value of the conditioning variable $X_{(ij)}$; see also Lemma~\ref{lem:condcov} below.

\subsection{Characterization of Gaussianity within the elliptical family}

If $X$ is Gaussian then each marginal distribution and each conditional distribution is Gaussian. Moreover, the conditional covariances do not depend on the conditioning variable and independence is equivalent to zero correlations. These properties characterize the Gaussian distribution in the class of elliptical distributions. We recall these basic results.   
\begin{lem}[Lemma~4 and 8 in \cite{kelker1970distribution}]\label{lem:kelker4}
	Let $X\sim E(\mu,\Sigma)$. If $X_I$ is Gaussian for some $I \subseteq V$ then $X$ is Gaussian. Further, if $X_I$ given $X_J$ is Gaussian for some $I,J \subseteq V$ then $X$ is Gaussian.
      \end{lem}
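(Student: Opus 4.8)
The plan is to reduce the general statement to the two-dimensional case and then exploit the stochastic representation \eqref{eq:stoch2}. First I would observe that both assertions follow once we prove the simplest instance: if $X=(X_1,X_2)\sim E(\mu,\Sigma)$ with $\Sigma$ positive definite and $X_1$ is Gaussian, then $X$ is Gaussian. Indeed, for the marginal statement, if $X_I$ is Gaussian for $I\subseteq V$, then in particular $X_i$ is Gaussian for some $i$, and by Proposition~\ref{prop:margcond}(i) any pair $(X_i,X_j)$ is elliptical with $X_i$ Gaussian; the two-variable case then forces $(X_i,X_j)$ Gaussian for every $j$, and iterating (each such pair pins down the mixing variable $\tau$) shows $X$ is Gaussian. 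For the conditional statement, by Proposition~\ref{prop:margcond}(ii) the conditional law $X_I\mid X_J$ is elliptical with the \emph{same} generator $\phi$ (up to the usual reparametrization), so if it is Gaussian then that generator is the Gaussian one, hence $X$ itself is Gaussian; alternatively one reduces again to a single conditionally Gaussian coordinate and uses that $X_I\mid X_J$ Gaussian implies the radial part has the Gaussian distribution.

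The core step is therefore the univariate-marginal case. Write $X=\mu+\tfrac{1}{\sqrt\tau}Z$ with $Z\sim N(0,\Sigma)$ as in \eqref{eq:stoch2}, so that a single coordinate satisfies $X_1-\mu_1 \stackrel{d}{=} \tfrac{1}{\sqrt\tau}Z_1$ with $Z_1\sim N(0,\sigma_{11})$. The hypothesis is that $\tfrac{1}{\sqrt\tau}Z_1$ is Gaussian. I would now argue at the level of characteristic functions or, more cleanly, via the radial variable: set $R=\xi$ in \eqref{eq:stoch}, so any one-dimensional projection of $X-\mu$ has the form $R\cdot(\text{first coordinate of }\Sigma^{1/2}U)$. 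One then needs the fact (Kelker's lemma, essentially a Schoenberg-type statement) that an elliptical random vector whose one-dimensional marginals are Gaussian must have its generator equal to $\phi(t)=e^{-t/2}$: concretely, the characteristic function of $X_1-\mu_1$ is $s\mapsto\phi(\sigma_{11}s^2)$, and requiring this to equal $e^{-\sigma_{11}s^2/2}$ for all $s$ immediately gives $\phi(u)=e^{-u/2}$ on $[0,\infty)$, which is exactly the Gaussian generator; hence the characteristic function of $X$ is $\bs t\mapsto \exp(i\mu^T\bs t)e^{-\bs t^T\Sigma\bs t/2}$, i.e. $X\sim N(\mu,\Sigma)$.

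The main obstacle is the conditional part, because one must be careful that conditioning preserves ellipticity with a controllable generator. The delicate point is that in \eqref{eq:stoch2} the scalar $\tau$ is generally \emph{not} independent of $Z$, so ``$X_I\mid X_J$ Gaussian'' is not merely a statement about a fixed $\tau$; one needs Proposition~\ref{prop:margcond}(ii), which guarantees that the conditional law is genuinely elliptical (with generator obtained from $\phi$ by the standard integral transform depending on $\|x_J-\mu_J\|_{\Sigma_{JJ}}$). Once that structural fact is in hand, applying the marginal case to a single coordinate of $X_I\mid X_J$ shows the conditional generator is Gaussian, and then a short argument (e.g. evaluating at $x_J=\mu_J$, or using that the family of generators arising from $\phi$ by this transform is Gaussian only if $\phi$ is) upgrades this back to $X$ being Gaussian. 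I would cite \cite[Lemmas~4 and 8]{kelker1970distribution} for the precise form of these reductions rather than reproving Schoenberg's theorem from scratch.
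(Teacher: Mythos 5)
The paper offers no proof of Lemma~\ref{lem:kelker4} at all --- it is quoted from \cite{kelker1970distribution} (Lemmas 4 and 8) with no argument supplied --- so there is nothing in-paper to compare against line by line. Your treatment of the marginal half is the standard and correct argument, and it is essentially self-contained: setting $\bs t=(\bs t_I,0)$ in Definition~\ref{def:elliptical_distrib} shows the characteristic function of $X_I-\mu_I$ is $\bs t_I\mapsto \phi(\bs t_I^T\Sigma_{II}\bs t_I)$, so Gaussianity of even one coordinate forces $\phi(u)=e^{-cu}$ for some $c>0$, and since $\phi$ generates the \emph{whole} vector, the characteristic function of $X$ is $e^{i\mu^T\bs t}e^{-c\,\bs t^T\Sigma\bs t}$, i.e.\ $X\sim N(\mu,2c\Sigma)$. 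Two small repairs: you should allow a general exponent $c$ rather than matching to $e^{-\sigma_{11}s^2/2}$, since the marginal variance is $\tfrac{\E\xi^2}{d}\sigma_{11}$ rather than $\sigma_{11}$; and the opening reduction to bivariate marginals with the claim that ``each such pair pins down the mixing variable $\tau$'' is superfluous (and, read on its own, misleading --- pairwise Gaussianity implies joint Gaussianity here only because the generator is shared), so the one-coordinate computation should simply be the whole proof of the first assertion.

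The conditional half is where the proposal has a soft spot. As you note in your third paragraph, the conditional law $X_I\mid X_J=x_J$ is elliptical but its generator is \emph{not} ``the same $\phi$ up to reparametrization'': it is a family of generators indexed by $w=(x_J-\mu_J)^T\Sigma_{JJ}^{-1}(x_J-\mu_J)$, e.g.\ $\varphi_{|I|}(t;w)\propto\varphi_d(t+w)$ in the density case. Your first suggested fix, ``evaluating at $x_J=\mu_J$,'' does not work: Gaussianity of the conditional generator at a single value of $w$ constrains $\varphi_d$ only on one slice and says nothing about its global form. Your second suggestion is the right one --- the hypothesis holds for $x_J$ on a set of positive measure, which yields the functional equation $\varphi_d(t+w)=c(w)e^{-\gamma t}$ and hence $\varphi_d(u)\propto e^{-\gamma u}$, i.e.\ Gaussianity of $X$. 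Since you explicitly defer to \cite{kelker1970distribution} for this step, exactly as the paper does, the plan is acceptable, but the single-point evaluation should be struck and the positive-measure functional-equation argument made the stated route.
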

      
      We noted earlier that conditional correlations do not depend on the conditioning variable. For conditional covariances this is only true in the Gaussian case. 
\begin{lem}[Theorem~7 in \cite{kelker1970distribution}]\label{lem:condcov}
	Let $X=(X_I,X_J)\sim E(\mu,\Sigma)$. The conditional covariance of $X_I$ given $X_J$ is independent of $X_J$ if and only if $X$ is Gaussian.
\end{lem}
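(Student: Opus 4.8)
The converse is immediate: by Proposition~\ref{prop:margcond}(ii), if $X$ is Gaussian then $X_I\mid X_J=x_J\sim N(\mu_{I|J},\Sigma_{II\cdot J})$, whose covariance $\Sigma_{II\cdot J}$ does not involve $x_J$. For the forward direction I would first normalize: replacing $X_I$ by $\Sigma_{II\cdot J}^{-1/2}(X_I-\Sigma_{IJ}\Sigma_{JJ}^{-1}X_J)$ and $X_J$ by $\Sigma_{JJ}^{-1/2}X_J$ changes $X_I$ only by an invertible linear map composed with a function of $X_J$, so it affects neither the hypothesis nor the conclusion and makes the shape matrix the identity; hence we may assume $\mu=0$ and $X=(X_I,X_J)\sim E(0,I_d)$ with $X_I\in\R^p$, $X_J\in\R^q$ and $p,q\ge 1$. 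I would also assume that $X$ has a density $x\mapsto g(\|x\|^2)$ for some density generator $g\ge 0$ (the general case follows along the same lines, with the law of $\xi^2$ in place of $g(w)\,dw$; cf.\ \cite{kelker1970distribution}).

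The core of the argument is to turn the hypothesis into a functional equation for $g$. Integrating out $x_I$ in polar coordinates shows that $X_J$ has density $x_J\mapsto\tfrac{\omega_{p-1}}{2}\,\psi(\|x_J\|^2)$ with $\psi(t)=\int_0^\infty s^{p/2-1}g(s+t)\,ds$, so $\psi$ is, up to a positive constant, the density generator of the margin $X_J$; and that, writing $t=\|x_J\|^2$,
\[
\E\big(\|X_I\|^2\,\big|\,X_J=x_J\big)=\frac{\Psi(t)}{\psi(t)},\qquad \Psi(t)=\int_0^\infty s^{p/2}g(s+t)\,ds.
\]
By Proposition~\ref{prop:margcond}(ii) the conditional law of $X_I$ given $X_J=x_J$ is spherical in $\R^p$, so its covariance matrix equals $\tfrac1p\,\tfrac{\Psi(t)}{\psi(t)}\,I_p$. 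Thus the hypothesis is equivalent to $\Psi(t)=c\,\psi(t)$ for a constant $c>0$ and all $t$ in the interior $(0,T)$ of the support of $\|X_J\|^2$, where $T=\sup(\Supp g)\le\infty$.

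Now I would solve this relation. Writing $\Psi(t)=\int_t^\infty(u-t)^{p/2}g(u)\,du$ and differentiating in $t$ (justified on compact subintervals of $(0,T)$, with no boundary term since $p\ge 1$) gives the decisive identity $\Psi'(t)=-\tfrac p2\,\psi(t)$. Combined with $\Psi=c\psi$ this forces $\psi=\Psi/c\in C^1((0,T))$ and $\psi'(t)=-\tfrac p{2c}\,\psi(t)$, hence $\psi(t)=A\,e^{-\beta t}$ on $(0,T)$ with $\beta=\tfrac p{2c}>0$. Moreover $T=\infty$: if $T<\infty$ then $g$ vanishes on $(T,\infty)$, so $\Psi(t)=\int_t^T(u-t)(u-t)^{p/2-1}g(u)\,du\le(T-t)\,\psi(t)$ and therefore $c\le T-t\to 0$ as $t\uparrow T$, which is impossible. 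Hence the density generator of $X_J$ is proportional to $e^{-\beta\|x_J\|^2}$ on all of $\R^q$, i.e.\ $X_J\sim N(0,\tfrac1{2\beta}I_q)$; by Lemma~\ref{lem:kelker4} this already makes $X$ Gaussian, and undoing the initial affine reduction completes the proof.

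The only step that is not routine bookkeeping is recognizing that ``the conditional covariance of $X_I$ given $X_J$ does not depend on $X_J$'' amounts to the clean relation $\Psi=c\psi$ for the radial profile of $g$ --- once this is in hand one is left with a first-order linear ODE. The ingredients behind it (the polar-coordinate computation, which also identifies $\psi$ with the marginal generator of $X_J$; differentiation under the integral; and the exclusion of bounded support) are each elementary.
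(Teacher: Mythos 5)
The paper does not prove this lemma at all --- it simply cites Theorem~7 of \cite{kelker1970distribution} --- so your argument is a self-contained substitute rather than a variant of the paper's proof, and it is essentially correct. The affine reduction to $E(0,I_d)$ is legitimate: the block map you describe sends $\Sigma$ to the identity, the new $X_J$ generates the same $\sigma$-field as the old one, and the new conditional covariance is $\Sigma_{II\cdot J}^{-1/2}\cov(X_I\mid X_J)\Sigma_{II\cdot J}^{-1/2}$, so both hypothesis and conclusion are preserved. The polar-coordinate identities for $\psi$ and $\Psi$, the sphericity of the conditional law giving $\cov(X_I\mid X_J)=\tfrac1p\tfrac{\Psi(t)}{\psi(t)}I_p$, the identity $\Psi'=-\tfrac p2\psi$, the resulting ODE forcing $\psi(t)=Ae^{-\beta t}$, and the exclusion of $T<\infty$ via $c\le T-t$ are all sound; invoking Lemma~\ref{lem:kelker4} to upgrade Gaussianity of the margin $X_J$ to Gaussianity of $X$ is exactly the right closing move. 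The one genuine restriction is your standing assumption that $X$ has a Lebesgue density: the lemma as stated (and as used in the paper, where densities are only assumed from Section~4.3 onward) covers elliptical laws whose radial variable $\xi$ may be singular, and your parenthetical that ``the general case follows along the same lines'' does require real work (the disintegration defining $\cov(X_I\mid X_J)$, the replacement of $g(u)\,du$ by the radial law, and integrability of $(u-t)^{p/2-1}$ against a possibly atomic measure when $p=1$ all need checking). You flag this, and the remaining gaps are routine (differentiation under the integral, ``a.e.'' versus ``everywhere'' in the functional equation $\Psi=c\psi$), so I would accept the proof in the absolutely continuous case and defer to Kelker for the general one, which is what the paper itself does.
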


The standard definition of graphical models uses density factorizations that link to conditional independence through the Hammersley-Clifford theorem \citep{lau96}. However, it is not possible to define conditional independence in the elliptical family outside of the Gaussian case. The next two characterizations are the most consequential for this article.

\begin{lem}[Lemma~5 in \cite{kelker1970distribution}]\label{lem:kelker5}
	Let $X\sim E(\mu,\Sigma)$. If $\Sigma$ is a diagonal matrix, then the components of $X$ are independent if and only if $X$ has a normal distribution.
\end{lem}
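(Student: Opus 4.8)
The easy direction is the implication from normality: if $X\sim N(\mu,\Sigma)$ with $\Sigma$ diagonal, the density factors as a product over the coordinates, so the $X_i$ are independent. The content is the converse, which I would prove as follows.

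First I would reduce to the spherical case. Since $\Sigma=\operatorname{diag}(\sigma_1^2,\dots,\sigma_d^2)$ is positive definite, replacing $X$ by the affine image $\big((X_i-\mu_i)/\sigma_i\big)_{i=1}^d$ preserves ellipticity (the shape matrix becoming the identity) and preserves independence of the coordinates, and $X$ is normal if and only if this image is; so we may assume $X\sim E(0,I)$ with independent coordinates and generator $\phi$, i.e.\ characteristic function $\bs t\mapsto\phi(\|\bs t\|^2)$. Evaluating this at $\bs t=t\,e_i$ shows each $X_i$ has characteristic function $t\mapsto\phi(t^2)$, so independence gives, for all $\bs t\in\R^d$,
\[
\phi\big(t_1^2+\cdots+t_d^2\big)\;=\;\prod_{i=1}^d\phi(t_i^2).
\]
Setting $t_3=\cdots=t_d=0$ (this is where $d\ge2$ is used), using $\phi(0)=1$, and writing $u=t_1^2$, $v=t_2^2$, we obtain the multiplicative Cauchy equation $\phi(u+v)=\phi(u)\phi(v)$ for all $u,v\ge0$.

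Now I would solve this functional equation. The function $\phi$ is continuous (it is a value of a characteristic function), $\phi(0)=1$, and $|\phi(s)|=|\phi(\|\bs t\|^2)|\le1$. From $\phi(s)=\phi(s/2)^2$ we get $\phi\ge0$, and if $\phi$ vanished at some $s_0$ it would vanish at $s_0/2^n\to0$, contradicting $\phi(0)=1$; hence $\phi>0$, so $\log\phi$ solves the additive Cauchy equation and is continuous, therefore linear: $\phi(s)=e^{-cs}$ for some $c$, with $c\ge0$ since $\phi\le1$ and $c>0$ since $\cov(X)=\tfrac{\E\xi^2}{d}I\succ0$. Thus each $X_i$ has characteristic function $e^{-ct^2}$, i.e.\ $X_i\sim N(0,2c)$, and being independent Gaussians the vector $X$ is Gaussian. (Alternatively, this last step is precisely the Herschel--Maxwell characterization of the Gaussian among rotation-invariant vectors with independent coordinates; or, having shown a single $X_i$ is Gaussian, one may invoke Lemma~\ref{lem:kelker4}.)

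The only genuinely delicate point is the functional-equation step: one must verify that $\phi$ does not vanish and fix the sign of the exponent from the bound $|\phi|\le1$; the reduction to $\Sigma=I$ and the passage from marginal normality to joint normality are routine.
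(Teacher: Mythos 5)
The paper does not prove this lemma; it is quoted verbatim from Kelker (1970, Lemma~5) with only a citation. Your argument is correct and is essentially the classical proof behind that citation: reduce to $E(0,I)$, observe that independence forces the characteristic generator to satisfy $\phi(u+v)=\phi(u)\phi(v)$ on $[0,\infty)$, and solve the Cauchy equation using continuity, $|\phi|\le 1$, and non-vanishing of $\phi$ near $0$ to get $\phi(s)=e^{-cs}$ (the Herschel--Maxwell characterization). The delicate points you flag --- non-vanishing of $\phi$, the sign of $c$, and excluding the degenerate case $c=0$ via the paper's standing assumption $0<\E\xi^2<\infty$ --- are exactly the ones that need care, and you handle them correctly.
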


\begin{prop}[Theorem~3 in \cite{baba2004partial}]\label{prop:nocondind}
	Suppose that $X\sim E(\mu,\Sigma)$ and $X_i\indep X_j|X_C$ for some $i,j\in V$ and $C\subseteq \{1,\ldots,d\}\setminus \{i,j\}$. Then $X$ is Gaussian.
\end{prop}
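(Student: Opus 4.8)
The plan is to study the conditional distribution of $(X_i,X_j)$ given $X_C$, which by Proposition~\ref{prop:margcond}(ii) is again elliptical, and to exploit the fact that within the elliptical family a \emph{vanishing covariance} already forces the conditional scale matrix to be diagonal, after which Kelker's rigidity results take over. First I would reduce to the case $V=\{i,j\}\cup C$: by Proposition~\ref{prop:margcond}(i) the marginal $X_{\{i,j\}\cup C}$ is elliptical and still satisfies $X_i\indep X_j\mid X_C$, and by the first part of Lemma~\ref{lem:kelker4} it suffices to prove that this marginal is Gaussian.

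In more detail: since $0<\E\xi^2<\infty$, the tower property gives $\E(X_i^2+X_j^2\mid X_C=x_C)<\infty$ for Lebesgue-almost every $x_C$, so fix such an $x_C$. By Proposition~\ref{prop:margcond}(ii) we have $(X_i,X_j)\mid X_C=x_C\sim E(\mu_{\{i,j\}|C},\,S)$ with $S:=\Sigma_{\{i,j\}\{i,j\}\cdot C}$ (equal to $K_{\{i,j\}\{i,j\}}^{-1}$ by Proposition~\ref{prop:margcond}(ii)) positive definite and, crucially, not depending on the value $x_C$. The covariance of any elliptical law $E(m,S)$ with finite second moments is a positive multiple of $S$; since $X_i\indep X_j\mid X_C$ implies $\cov(X_i,X_j\mid X_C=x_C)=0$, we conclude $S_{ij}=0$, that is, $S$ is diagonal. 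Consequently, for almost every $x_C$ the conditional law of $(X_i,X_j)$ is an elliptical distribution with diagonal scale matrix $S$ and with independent coordinates, and Lemma~\ref{lem:kelker5} forces this bivariate law to be Gaussian. Thus $X_{\{i,j\}}$ given $X_C$ is Gaussian, and the conditional part of Lemma~\ref{lem:kelker4} (applied with $I=\{i,j\}$ and $J=C$) yields that $X$ is Gaussian.

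The two load-bearing steps are Kelker's lemmas: Lemma~\ref{lem:kelker5} supplies the rigidity that an elliptical law with diagonal scale and independent coordinates must be Gaussian, and Lemma~\ref{lem:kelker4} lifts Gaussianity of a marginal or a conditional law to the whole vector; by contrast the passage from conditional independence to the diagonality of $S$ is elementary and is precisely where finiteness of second moments (assumed throughout) enters. I expect the only delicate point is the bookkeeping around the qualifier ``almost every $x_C$'': one must verify that the conditional law genuinely satisfies the hypotheses of Lemmas~\ref{lem:kelker5} and~\ref{lem:kelker4} and that discarding a null set of conditioning values does no harm. No hard computation is involved.
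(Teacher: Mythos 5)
The paper gives no internal proof of this proposition: it is imported wholesale as Theorem~3 of \cite{baba2004partial}, so there is nothing in the text to compare against. Your proposal is therefore a genuinely self-contained alternative, assembled entirely from results the paper does state (Proposition~\ref{prop:margcond} and Kelker's Lemmas~\ref{lem:kelker4} and~\ref{lem:kelker5}), and it is correct. The chain of reasoning --- condition on $X_C$, use that the conditional scale matrix $S=\Sigma_{\{i,j\}\{i,j\}\cdot C}$ is the same for every $x_C$ while the conditional covariance is a scalar multiple $c(x_C)\,S$ of it, deduce that $S$ is diagonal from $\cov(X_i,X_j\mid X_C)=0$, invoke Lemma~\ref{lem:kelker5} on the (bivariate, diagonal-scale, independent-coordinate) conditional law, and then lift with the conditional part of Lemma~\ref{lem:kelker4} --- is sound, and your reduction to $V=\{i,j\}\cup C$ is harmless. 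One point worth tightening: the multiple $c(x_C)$ is only \emph{nonnegative} a priori (it vanishes exactly when the conditional law degenerates to a point mass), so to conclude $S_{ij}=0$ you need $c(x_C)>0$ on a set of positive measure. This follows from the law of total covariance: since $\E(X_{\{i,j\}}\mid X_C)$ is affine in $X_C$, one gets $\E\left[c(X_C)\right]S=\tfrac{\E\xi^2}{d}\,S$ with $S$ positive definite, so $\E\left[c(X_C)\right]>0$. Relatedly, Lemma~\ref{lem:condcov} warns that $c(x_C)$ genuinely varies with $x_C$ outside the Gaussian case; your argument correctly leans only on the constancy of $S$, not of the conditional covariance itself. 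With that measure-theoretic bookkeeping made explicit, the proof stands and usefully replaces an external citation by a derivation from the paper's own toolkit.
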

{In the special case of the scale mixture of normals, }the fact that Gaussianity is needed to conclude independence {in} Lemma~\ref{lem:kelker5} or conditional independence in Proposition~\ref{prop:nocondind} is also seen from (\ref{eq:corrsq}) and (\ref{eq:corrsq_partial}) respectively. For example, if $\tau$ is not constant then ${\rm corr}(X_i^2,X_j^2)>0$  proving that $(X_i,X_j)$ cannot be independent. 
%This gives an alternative way of proving these two results {for scale mixture of normals}.

\section{Graphs for (trans)elliptical distributions}\label{sec:graphical}

\subsection{Partial correlation graph and dependence}

By Proposition \ref{prop:nocondind} it is not possible to do structural learning in (non-normal) elliptical graphical models, under the conditional independence definition. It is then natural to look for relaxations that may be useful from the modelling point of view. A common strategy is to {define graphs based on zeroes} in the inverse covariance matrix, mimicking the Gaussian case; see \cite{vogel2011elliptical}. 

\begin{defn}
	The partial correlation graph (PG) is the graph $G=G(K)$ over vertex set $V=\{1,\ldots,d\}$ with an edge between $i\neq j$ if and only if $K_{ij}\neq 0$.
\end{defn}

Equivalently, $K_{ij}=0$ if and only if the {partial correlation} $\rho_{ij \cdot V \setminus \{i,j\}}=0$.
In general, $\rho_{ij \cdot V \setminus \{i,j\}}=0$ does not imply conditional   independence but only linear independence. The aim of this section is to understand what additional information does the PG carry in elliptical distributions. Proposition \ref{prop:margcond} and standard matrix algebra give
\begin{equation}\label{eq:meaninK}
\E(X_i|X_{(i)})\;= %\;\mu_i-\Sigma_{i,-i}\Sigma_{-i,-i}^{-1}(X_{(i)}-\mu_{(i)})\;=\;
\mu_i-\tfrac{1}{K_{ii}}K_{i,(i)}(X_{(i)}-\mu_{(i)}),	
\end{equation}
hence $K_{ij}=0$ if and only if $\E(X_i|X_{(i)})$ does not depend on $X_j$. This immediately gives the following standard result.
\begin{prop}\label{prop:condcorr}
Let $X\sim E(\mu,K^{-1})$. Then $K_{ij}=0$ if and only if $\cov(X_i,X_j|X_{(ij)})=0$, or equivalently, $\E(X_i|X_{(i)})$ and $\E(X_j|X_{(j)})$ depend on $X_{(ij)}$ only.
\end{prop}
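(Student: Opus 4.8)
The plan is to treat the two halves of the statement separately: the equivalence of $K_{ij}=0$ with the conditional-mean condition is essentially immediate from (\ref{eq:meaninK}) and is the easy half, whereas the conditional-covariance characterisation is the substantive one, which I would derive by conditioning down to a bivariate elliptical distribution with Proposition~\ref{prop:margcond} and then reading off the off-diagonal entry of a $2\times 2$ inverse.

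For the conditional-mean half, I would observe from (\ref{eq:meaninK}) that $\E(X_i\mid X_{(i)})$ is an affine function of $X_{(i)}$ whose coefficient on $X_j$ equals $-K_{ij}/K_{ii}$, with $K_{ii}>0$ since $K$ is positive definite; this coefficient is uniquely determined because $\cov(X_{(i)})$ is non-singular, so it vanishes --- equivalently $\E(X_i\mid X_{(i)})$ is a function of $X_{(ij)}$ alone --- if and only if $K_{ij}=0$. Running the same argument with $i$ and $j$ interchanged (using $K_{ij}=K_{ji}$) handles $\E(X_j\mid X_{(j)})$, so $K_{ij}=0$ is equivalent to each of the two conditional-mean conditions, hence to their conjunction.

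For the conditional covariance I would set $I=\{i,j\}$, $J=V\setminus\{i,j\}$, and first note that $0<\E\xi^2<\infty$ makes $X$ square-integrable, so by the tower property the conditional second moments $\E(X_i^2\mid X_J)$, $\E(X_j^2\mid X_J)$ are finite a.s. and $\cov(X_i,X_j\mid X_J=x_J)$ exists for a.e. $x_J$. By Proposition~\ref{prop:margcond}(ii) the conditional law of $X_I$ given $X_J=x_J$ is elliptical with scatter $\Sigma_{II\cdot J}=(K_{II})^{-1}$, where $K_{II}$ is the positive-definite $2\times 2$ block of $K$ indexed by $I$; from the stochastic representation (\ref{eq:stoch}) its covariance matrix equals $\tfrac{1}{2}\E(\xi_{I|J}^2)\,(K_{II})^{-1}$ for the conditional radial variable $\xi_{I|J}$, a scalar that is positive (the law is non-degenerate since $(K_{II})^{-1}$ is positive definite) and finite (again by the tower property), and which in general depends on $x_J$, as it must by Lemma~\ref{lem:condcov}. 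Writing $c(x_J)>0$ for this scalar, I would conclude
$$\cov(X_i,X_j\mid X_J=x_J)\;=\;c(x_J)\,\big[(K_{II})^{-1}\big]_{ij}\;=\;-\,\frac{c(x_J)\,K_{ij}}{K_{ii}K_{jj}-K_{ij}^2},$$
and since $K_{ii}K_{jj}-K_{ij}^2=\det K_{II}>0$ and $c(x_J)>0$, this vanishes for one (hence every) $x_J$ exactly when $K_{ij}=0$, giving $K_{ij}=0\iff\cov(X_i,X_j\mid X_{(ij)})=0$.

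The main obstacle is not in the algebra but in the measure-theoretic bookkeeping: I need the conditional covariance to exist and the conditional radial variable $\xi_{I|J}$ to have a finite second moment, and I must be sure that the scalar $c(x_J)$, which genuinely varies with the conditioning value outside the Gaussian case by Lemma~\ref{lem:condcov}, is nevertheless strictly positive for every $x_J$ so that the vanishing behaviour of $\cov(X_i,X_j\mid X_J=x_J)$ is governed solely by $K_{ij}$. All of this follows from $0<\E\xi^2<\infty$ together with positive-definiteness of $\Sigma$, but it is the part I would state with care.
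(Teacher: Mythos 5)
Your argument is correct and takes essentially the same route the paper (implicitly) does: the paper treats the result as immediate from \eqref{eq:meaninK} for the conditional-mean half and from Proposition~\ref{prop:margcond}(ii) for the covariance half, i.e.\ reducing to the bivariate conditional elliptical law and reading off the off-diagonal entry of $(K_{II})^{-1}$. The only claim stated more strongly than needed is that $c(x_J)>0$ for \emph{every} $x_J$ (positive definiteness of the scatter alone does not rule out a degenerate conditional radial part); since $\cov(X_i,X_j\mid X_{(ij)})=0$ is an almost-sure statement, it suffices that $\E[c(X_J)]=\E(\xi^2)/d>0$, which forces $c(x_J)>0$ on a set of positive probability.
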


%We now show that Proposition~\ref{prop:condcorr} gives a much stronger interpretation for missing edges in elliptical PGs. Recall that for any two variables $X_1,X_2$ it holds that $X_1\indep X_2$ are independent if and only if for all $L^2(\R)$ functions $g,h$ we have $\cov(g(X_1),h(X_2))=0$. If $(X_1,X_2)\sim E(\mu,\Sigma)$ then this condition cannot be satisfied for every $g,h$ (unless $(X_1,X_2)$ is Gaussian) with (\ref{eq:corrsq}) being an important example. 

Our first main results offer a stronger characterization for elliptical distributions. Lemma~\ref{lem:anyf} relates to marginal covariances, and immediately gives Theorem~\ref{th:anyf} on conditional covariances.
\begin{lem}\label{lem:anyf}
  Let $X \sim E(\mu,\Sigma)$ and $I,J \subseteq V$. Then $\Sigma_{IJ}=0$ if and only if $\cov(g(X_I),X_J)=0$ for any function $g$ for which the covariance exists. 
  %EARLIER STATEMENT
  %If $(X_1,X_2)$ is elliptically distributed and the scale matrix $\Sigma$ is diagonal then $\cov(f(X_1),X_2)=\cov(X_1,g(X_2))=0$ for all functions $f,g$ for which these covariances exist. 
\end{lem}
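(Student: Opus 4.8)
The plan is to read everything off the conditional‑mean formula of Proposition~\ref{prop:margcond} together with the tower property; no appeal to the stochastic representation~\eqref{eq:stoch} is needed. First I would dispose of the trivialities and reductions. If $I\cap J\neq\emptyset$ both sides of the equivalence are false: $\Sigma$ is positive definite so $\Sigma_{IJ}\neq 0$, and taking $g$ the projection onto a coordinate $i\in I\cap J$ gives $\cov(g(X_I),X_J)=\var(X_i)>0$. So assume $I\cap J=\emptyset$ and, replacing $X$ by the margin $X_{I\cup J}$ (elliptical by Proposition~\ref{prop:margcond}(i) and carrying the same $\Sigma_{IJ}$ block), assume $I\cup J=V$, so that $X=(X_I,X_J)$ is a genuine split. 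For the ``if'' direction, if $\cov(g(X_I),X_J)=0$ for every admissible $g$, then in particular $\cov(X_i,X_j)=0$ for all $i\in I$, $j\in J$ (these exist since $\E\xi^2<\infty$); since $\cov(X)=\tfrac{\E(\xi^2)}{d}\Sigma$ with $\E(\xi^2)>0$, this forces $\Sigma_{IJ}=0$.

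For the ``only if'' direction, assume $\Sigma_{IJ}=0$, equivalently $\Sigma_{JI}=0$. By Proposition~\ref{prop:margcond}(ii), with the roles of $I$ and $J$ exchanged, the conditional law $X_J\mid X_I=x_I$ is $E(\mu_{J|I},\Sigma_{JJ\cdot I})$ with $\mu_{J|I}=\mu_J+\Sigma_{JI}\Sigma_{II}^{-1}(x_I-\mu_I)=\mu_J$; since $0<\E\xi^2<\infty$ the first moment exists and, by symmetry of the elliptical law about its location parameter, equals it, so $\E(X_J\mid X_I)=\mu_J$ almost surely, constant in $X_I$. Now fix any $g$ for which $\cov(g(X_I),X_J)$ exists, which (together with $\E|X_j|<\infty$) gives $\E|g(X_I)|<\infty$ and $\E|g(X_I)X_j|<\infty$ for each $j\in J$, so that the $\sigma(X_I)$‑measurable factor $g(X_I)$ may be pulled through the conditional expectation:
\[
\E[g(X_I)X_J]=\E[\,g(X_I)\,\E(X_J\mid X_I)\,]=\E[g(X_I)]\,\mu_J=\E[g(X_I)]\,\E[X_J],
\]
whence $\cov(g(X_I),X_J)=0$.

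The write‑up is short and I do not expect a genuine obstacle. The only points requiring care are the integrability bookkeeping that licenses pulling $g(X_I)$ out of $\E(\,\cdot\mid X_I)$, and the assertion that the mean of $E(\mu_{J|I},\Sigma_{JJ\cdot I})$ is $\mu_{J|I}$ — both covered by the standing assumption $0<\E\xi^2<\infty$ and the discussion following Proposition~\ref{prop:margcond}. The one idea in the proof is the observation that $\Sigma_{IJ}=0$ makes $\E(X_J\mid X_I)$ not merely uncorrelated with, but functionally independent of, $X_I$, and this is exactly what annihilates $\cov(g(X_I),X_J)$ for every $g$ simultaneously; Theorem~\ref{th:anyf} then follows by applying the lemma to the conditional law $X\mid X_{(ij)}$, which is again elliptical.
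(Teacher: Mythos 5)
Your proof is correct and follows essentially the same route as the paper's: the key step in both is that $\Sigma_{JI}=0$ forces $\E(X_J\mid X_I)-\mu_J=\Sigma_{JI}\Sigma_{II}^{-1}(X_I-\mu_I)$ to vanish identically (via Proposition~\ref{prop:margcond}), after which the law of total expectation kills $\cov(g(X_I),X_J)$ for every admissible $g$. Your extra bookkeeping --- the reduction to $I\cap J=\emptyset$, the explicit treatment of the trivial converse via coordinate projections, and the integrability remarks --- is sound but not a different argument; the paper simply leaves these points implicit.
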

%Note that $g$ in Lemma \ref{lem:anyf} can map to an arbitrary real vector space $\R^k$. 
\begin{proof}
Without loss of generality assume $J=\{j\}$. By  the law of  total expectation
$$
\cov(g(X_I),X_j)=\E_{X_I}\Big[(g(X_I)-\E g(X_I))\E(X_i-\mu_j|X_I)\Big],
$$
where the expectation  $E_{X_I}$ is computed with respect to the marginal distribution of $X_I$. The expression inside the  expectation  is almost  surely zero with respect to this distribution. Indeed, by Proposition~\ref{prop:margcond},
$$
\E(X_j-\mu_j|X_I)\;=\;\E(X_j|X_I)-\mu_j\;=\;\Sigma_{jI}\Sigma_{II}^{-1}(X_I-\mu_I),
$$
which is zero because $\Sigma_{jI}=0$.
\end{proof}

\begin{thm}\label{th:anyf}
Let $X\sim E(\mu,K^{-1})$. Then $K_{ij}=0$ if and only if $\cov(g(X_i),X_j|X_{(ij)})=0$ for \emph{any}  function $g$ for which this covariance exists. 	
%Equivalently, for every strictly monotone $g$ the regression function $\E(X_j|g(X_i),X_{(ij)})$ depends only on $X_{(ij)}$.
\end{thm}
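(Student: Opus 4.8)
The plan is to reduce Theorem~\ref{th:anyf} to Lemma~\ref{lem:anyf} by applying the latter to the conditional distribution of $X$ given $X_{(ij)}$. By Proposition~\ref{prop:margcond}(ii) the conditional law of $(X_i,X_j)$ given $X_{(ij)}=x_{(ij)}$ is again elliptical, namely $E(\mu_{\{i,j\}|(ij)},\Sigma_{\{i,j\}\{i,j\}\cdot (ij)})$, and the $2\times 2$ conditional covariance block is $\Sigma_{\{i,j\}\{i,j\}\cdot(ij)}=K_{\{i,j\}\{i,j\}}^{-1}$. Crucially, the off-diagonal entry of $K_{\{i,j\}\{i,j\}}^{-1}$ vanishes exactly when $K_{ij}=0$, since for a $2\times 2$ matrix inversion swaps the roles of the diagonal and negates the off-diagonal up to the determinant factor. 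So $K_{ij}=0$ is equivalent to the conditional elliptical distribution having a diagonal dispersion matrix in the $(i,j)$ coordinates.

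First I would state this equivalence carefully: $K_{ij}=0 \iff (\Sigma_{\{i,j\}\{i,j\}\cdot(ij)})_{ij}=0$. Then, conditionally on $X_{(ij)}=x_{(ij)}$, apply Lemma~\ref{lem:anyf} with the roles of $I$ and $J$ being the singletons $\{i\}$ and $\{j\}$ and with the conditional elliptical distribution in place of the unconditional one: this yields that $(\Sigma_{\{i,j\}\{i,j\}\cdot(ij)})_{ij}=0$ if and only if $\cov(g(X_i),X_j\mid X_{(ij)}=x_{(ij)})=0$ for every $g$ for which this covariance exists. Since, by Lemma~\ref{lem:condcov}-type reasoning (or directly from Proposition~\ref{prop:margcond}(ii)), the conditional dispersion matrix $\Sigma_{\{i,j\}\{i,j\}\cdot(ij)}$ does not depend on the value $x_{(ij)}$, the vanishing either holds for all values of the conditioning variable or for none, so the statement becomes the clean $\cov(g(X_i),X_j\mid X_{(ij)})=0$ almost surely. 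Chaining the two equivalences gives the theorem.

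One point needing a little care is the quantifier ``for any function $g$ for which this covariance exists'': I would note that the existence of $\cov(g(X_i),X_j\mid X_{(ij)})$ and of $\cov(g(X_i),X_j)$ are handled by the same finiteness conditions used in Lemma~\ref{lem:anyf} (namely $g(X_i)$ and $X_j$ having finite conditional second moments, which holds for a.e.\ value of $X_{(ij)}$ whenever it holds marginally, because conditioning an elliptical distribution on elliptical margins preserves the moment structure up to the bounded conditional-dispersion factor). The forward direction ($K_{ij}=0 \Rightarrow$ the covariance vanishes) needs only that the conditional dispersion is diagonal in $(i,j)$, exactly as in the proof of Lemma~\ref{lem:anyf}: $\E(X_j-\mu_{j|(ij)}\mid X_i,X_{(ij)})$ is an affine function of $X_i$ whose $X_i$-coefficient is the relevant off-diagonal entry, hence zero. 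The converse follows by taking $g(X_i)=X_i$, which recovers ordinary conditional covariance and forces the off-diagonal entry, hence $K_{ij}$, to vanish.

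The main obstacle, such as it is, is bookkeeping rather than conceptual: one must make sure the ``for every $g$'' in the conclusion is genuinely uniform over the conditioning value $x_{(ij)}$, and that the exceptional null set of $x_{(ij)}$ on which moments might misbehave can be absorbed. Both are resolved by invoking the fact (Proposition~\ref{prop:margcond}(ii), and the discussion after it) that the conditional elliptical law has dispersion matrix independent of $X_{(ij)}$ and generator inherited in a controlled way, so there is really one conditional elliptical family indexed by a shifting mean but fixed shape. Given that, the argument is a two-line reduction to the already-proved Lemma~\ref{lem:anyf}.
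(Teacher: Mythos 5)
Your proposal is correct and follows essentially the same route as the paper: reduce to Lemma~\ref{lem:anyf} applied to the conditional distribution of $(X_i,X_j)$ given $X_{(ij)}$, which by Proposition~\ref{prop:margcond}(ii) is elliptical with dispersion block $K_{\{i,j\}\{i,j\}}^{-1}$, whose off-diagonal entry vanishes exactly when $K_{ij}=0$. The extra bookkeeping you add about uniformity over the conditioning value and moment existence is sound (and correctly anchored on the fact that the conditional dispersion parameter does not depend on $x_{(ij)}$), just more explicit than the paper's two-line argument.
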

\begin{proof}
%\footnote{DR. Silly one, in Thm 3.4 we can also state the slightly more general result for $\cov(g(X_I),X_j \mid X_{(ij)})$ for $g:\mathbb{R}^{|I|} \rightarrow \mathbb{R}$, right?}
{The matrix $\Sigma_{ij,ij|(ij)}=\Sigma_{ij,ij}-\Sigma_{ij,(ij)}\Sigma_{(ij),(ij)}^{-1}\Sigma_{(ij),ij}$ is diagonal if and only if $K_{ij}=0$}. The equivalence now follows by applying Lemma~\ref{lem:anyf} to the conditional distribution of $(X_i,X_j)$ given $X_{(ij)}$. {By  Proposition~\ref{prop:margcond}(ii) this distribution is equal to the elliptical distribution $E(\mu_{ij|(ij)},\Sigma_{ij,ij|(ij)})$}.	 
%For the second equivalence we use the fact that $\E(X_j|g(X_i),X_{(ij)})=\E(X_j|X_i,X_{(ij)})$ because $g$ is one-to-one.
\end{proof}

Lemma~\ref{lem:anyf} and Theorem~\ref{th:anyf} are if and only if statements, that is, they characterize the presence of zero marginal and partial correlations (respectively). In particular, Theorem~\ref{th:anyf} characterizes the meaning of elliptical PGs: if $(X_i,X_j)$ are conditionally uncorrelated then so are $X_j$ and any function of $X_i$.
For instance, there is no linear association between $X_j$ and higher-order moments associated to $X_i^2$, $X_i^3$, etc. 

{The most commonly used elliptical distributions in applications of partial correlation graphs are scale mixture of normals. Our interest in general elliptical distributions is motivated by semi-parametric techniques related to {copula models given by} transelliptical distributions, which we now discuss in more detail.}

\subsection{Transelliptical distributions}

Recall that {$Y$} has a transelliptical distribution, denoted $Y \sim \TE(\mu, \Sigma)$, if and only if $f(Y)=(f_1(Y_1),\ldots,f_d(Y_d)) \sim E(\mu,\Sigma)$ for strictly increasing {deterministic functions} $f_i$. If $f(Y)$ is Gaussian (nonparanormal sub-family) the PG gives conditional independence on $Y$  and so it is highly interpretable \citep{liu2012high}.
More generally, a missing edge in the PG means that $\cov(f_i(Y_i),f_j(Y_j)| f_{(ij)}(Y))=0$, but this interpretation is not very interesting given that $f$ is unknown and simply refers to linear {conditional } independence between the latent $(f_i(Y_i),f_j(Y_j))$. The focus should be directly on the dependence structure of $Y$.

Our second main result shows that a weaker version of Theorem~\ref{th:anyf} holds for transelliptical distributions.
\begin{thm}\label{th:somef}
	Suppose $Y \sim \TE(\mu,K^{-1})$. Then $K_{ij}=0$ if and only if the conditional covariance $\cov(f_i(Y_i),g(Y_j)| Y_{(ij)})$ is zero for every function  $g$ for which the covariance exists. 
%	{Equivalently, $\E(f_{i}(X_i)|X_{(i)})$ does not depend on $X_{j}$.}
\end{thm}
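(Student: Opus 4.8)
The plan is to reduce Theorem~\ref{th:somef} to Theorem~\ref{th:anyf} by pushing everything through the (unknown but strictly increasing) transformations $f_1,\ldots,f_d$ and using that $f(Y)\sim E(\mu,\Sigma)$ is genuinely elliptical. The key observation is that conditioning on $Y_{(ij)}$ is the same $\sigma$-algebra as conditioning on $f_{(ij)}(Y_{(ij)})$, since each $f_k$ is strictly increasing, hence a bijection between their ranges; so conditional expectations/covariances given $Y_{(ij)}$ and given $f_{(ij)}(Y)$ coincide. Likewise, for the random variable $g(Y_j)$, if we set $\tilde g := g\circ f_j^{-1}$ then $g(Y_j) = \tilde g(f_j(Y_j))$, and as $g$ ranges over all functions of $Y_j$ for which the relevant covariance exists, $\tilde g$ ranges over all functions of $f_j(Y_j)$ with the same property. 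The first coordinate is already in the ``right'' form: $f_i(Y_i)$ is precisely the $i$-th coordinate of the elliptical vector $f(Y)$, so the identity function plays the role of the distinguished coordinate.

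Concretely, I would first record the bijection lemma on $\sigma$-algebras ($\sigma(Y_{(ij)}) = \sigma(f_{(ij)}(Y))$) and the substitution $g(Y_j)=\tilde g(X_j)$ where $X:=f(Y)$. Then the quantity $\cov(f_i(Y_i),g(Y_j)\mid Y_{(ij)})$ equals $\cov(X_i, \tilde g(X_j)\mid X_{(ij)})$. Now apply Theorem~\ref{th:anyf} to the elliptical vector $X\sim E(\mu,\Sigma)$: since $K_{ij}=0$ is the same condition whether phrased via $\Sigma^{-1}$ for $X$ or (by definition of $\TE$) for $Y$, Theorem~\ref{th:anyf} gives $\cov(\tilde g(X_j),X_i\mid X_{(ij)})=0$ for all such $\tilde g$ iff $K_{ij}=0$ — note I am applying Theorem~\ref{th:anyf} with the roles of $i$ and $j$ swapped, which is legitimate since $K$ is symmetric. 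Translating back via the substitution gives exactly $\cov(f_i(Y_i),g(Y_j)\mid Y_{(ij)})=0$ for all admissible $g$, which is the claim. For the converse direction, it suffices to take $g$ to be the identity (assuming the covariance exists) or, more safely, run the biconditional of Theorem~\ref{th:anyf} in the reverse direction after the same translation.

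I expect the only genuine subtlety — not a deep one, but the place where care is needed — to be the bookkeeping around existence of covariances and the measure-theoretic identification of conditional expectations under the change of variables $Y\mapsto f(Y)$: one must check that $f_j^{-1}$ is well-defined on the support of $f_j(Y_j)$ (immediate from strict monotonicity) and that ``for every $g$ for which the covariance exists'' on the $Y$ side matches ``for every $\tilde g$ for which the covariance exists'' on the $X$ side, which follows because $g\leftrightarrow g\circ f_j^{-1}$ is a bijection preserving the integrability condition. Everything else is a direct appeal to Theorem~\ref{th:anyf}; there is no new analytic content, only the semiparametric translation. It may also be worth remarking, as the paper does, that the symmetric statement with $f_j(Y_j)$ fixed and $g(Y_i)$ arbitrary holds by the same argument, but a \emph{fully} symmetric ``$\cov(g(Y_i),h(Y_j)\mid Y_{(ij)})=0$ for all $g,h$'' is \emph{not} available in general — one of the two transformations must be the canonical $f_i$ — and Theorem~\ref{th:anyf} is exactly what makes even the one-sided strengthening possible.
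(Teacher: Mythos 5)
Your argument is correct and is essentially the paper's own proof: both reduce to Theorem~\ref{th:anyf} via the substitution $\tilde g = g\circ f_j^{-1}$ together with the observation that $X_{(ij)}=f_{(ij)}(Y)$ and $Y_{(ij)}$ generate the same $\sigma$-field. The only cosmetic difference is in the converse direction, where the paper takes $g=f_j$ (so that $\tilde g$ is the identity and $K_{ij}=0$ follows from vanishing of $\cov(X_i,X_j\mid X_{(ij)})$), which is the cleaner version of your ``run the biconditional in reverse'' option; your first suggestion of taking $g$ to be the identity alone would not suffice.
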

\begin{proof}
	Let $X= f(Y) \sim E(\mu,K^{-1})$. Suppose that $\cov(f_i(Y_i),g(Y_j)| Y_{(ij)})=0$ for all $g$, taking $g=f_j$ gives $\cov(f_i(Y_i),f_j(Y_j) \mid Y_{(ij)})=\cov(X_i,X_j \mid Y_{(ij)})=0$. {Since each $f_i$ is a strictly  monotone {fixed function}, it is bijective. It follows that $X_{(ij)}$ and $Y_{(ij)}$ generate the same $\sigma$-field giving that $\cov(X_i,X_j \mid Y_{(ij)})=\cov(X_i,X_j \mid X_{(ij)})$. Vanishing of this conditional covariance is equivalent to $K_{ij}=0$.} To prove the reverse implication, note that 
\begin{align}
  \cov(f_i(Y_i),g(Y_j)| Y_{(ij)})= \cov(X_i, g( f_j^{-1}( X_j)) | X_{(ij)})= 0
  \nonumber
\end{align}
where the second equality follows from Theorem~\ref{th:anyf}. 
%{The last statement follows by a similar argument: $\E(Y_i|X_{(i)})=\E(Y_i|Y_{(i)})$ and the latter does not depend on $Y_j$ by Theorem~\ref{th:anyf} applied to the elliptical vector $Y$ and $h$ the identity function.}
\end{proof}

Theorem~\ref{th:somef} helps interpret the PG as follows. If $K_{ij}=0$ then $f_i(Y_i)$ is conditionally uncorrelated with any function of $Y_j$. Hence learning a single element $f_i$ within $f$ (rather than the whole $f$) describes (local) aspects of conditional dependence of $Y_i$ on $Y_{(i)}$ (and functions thereof). Taking $g$ to be the identity function in Theorem~\ref{th:somef} we get the following result.
\begin{cor}\label{cor:somef}
Suppose $Y \sim \TE(\mu,K^{-1})$. If $K_{ij}=0$ then $\cov(g(Y_i),Y_j| Y_{(ij)})=0$ for \emph{some} strictly increasing function $g$. 
\end{cor}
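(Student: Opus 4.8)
The plan is to read this off directly from Theorem~\ref{th:somef}, which is the substantive result; the corollary is just the special case obtained by choosing a particular test function. So the first thing I would do is invoke Theorem~\ref{th:somef}: since $Y \sim \TE(\mu,K^{-1})$ and $K_{ij}=0$, we know that $\cov(f_i(Y_i),g(Y_j)\mid Y_{(ij)})=0$ for every function $g$ for which the covariance exists, where $f_i$ is the (fixed, strictly increasing) function from Definition~\ref{def:transelliptical_distrib} defining the transelliptical distribution.

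Next I would specialize: take $g$ to be the identity function on the $j$-th coordinate, which is admissible as long as the relevant covariance exists (this is the only regularity caveat, and it is inherited verbatim from the hypothesis of Theorem~\ref{th:somef}). This yields $\cov(f_i(Y_i),Y_j\mid Y_{(ij)})=0$. Now the key observation is simply that $f_i$ itself is a strictly increasing function of a real variable, so setting $g := f_i$ gives a strictly increasing $g$ with $\cov(g(Y_i),Y_j\mid Y_{(ij)})=0$, which is exactly the claim.

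I do not expect any real obstacle here — the entire content has already been absorbed into Theorem~\ref{th:somef}, and what remains is the trivial but conceptually useful remark that the distinguished function $f_i$ plays the role of the unknown transformation $g$. If I wanted to make the write-up fully self-contained rather than a one-line deduction, the only slightly delicate point would be worth a sentence: that because each $f_i$ is a strictly monotone fixed bijection, conditioning on $Y_{(ij)}$ and conditioning on $f_{(ij)}(Y)=X_{(ij)}$ generate the same $\sigma$-field, so the conditional covariance in the statement is genuinely the same object appearing in Theorem~\ref{th:somef}. But since that identification was already established inside the proof of Theorem~\ref{th:somef}, I would simply cite it.
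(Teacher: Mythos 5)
Your proof is correct and matches the paper's own argument exactly: the paper likewise obtains the corollary by taking $g$ to be the identity in Theorem~\ref{th:somef} and then observing that the resulting function $f_i$ is the strictly increasing $g$ asserted in the statement. No gaps.
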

The function $g$ in this corollary is precisely the function $f_i$ in Theorem~\ref{th:somef}. 
Corollary~\ref{cor:somef} gives the following interpretation. If $K_{ij}=0$ then $Y_i$ is conditionally uncorrelated with some strictly increasing transformation of $Y_j$ and also $Y_j$ is conditionally uncorrelated with some strictly increasing transformation of $Y_i$.

\subsection{Rank correlations}
\label{ssec:rankcorrel}

{Theorem~\ref{th:somef} is an if and only if statement, hence it fully characterizes the meaning of PG in transelliptical distributions}
%Theorem~\ref{th:somef} characterizes PGs 
using covariances between any function of $Y_j$ and latent $f_i(Y_i)$. Kendall's tau gives an interesting alternative characterization that can be interpreted without any reference to $f$.

Let {$W=(W_1,\ldots,W_d)$} be a continuous random vector and {$W'=(W_1',\ldots,W_d')$} be an independent copy. Kendall's tau for $(W_i,W_j)$ is 
	$$
\tau({W_i,W_j})\;:=\;{\rm corr}\big({\rm sign}{(W_i - W_i')},{\rm sign}{(W_j - W_j')}\big).
	$$
In {elliptically-distributed $X \sim E(\mu,\Sigma)$} the following beautiful result relates Pearson correlations $\rho(X_i,X_j)= {\rm corr}(X_i,X_j)= \Sigma_{ij}/\sqrt{\Sigma_{ii} \Sigma_{jj}}$ with Kendall's tau.
\begin{lem}[\cite{lindskog2003kendall}]\label{lem:tau}
	If $X\sim E(\mu,\Sigma)$ then 
	$$
	\tau(X_i,X_j)\;=\;\frac{2}{\pi}\arcsin(\rho(X_i,X_j)).
	$$ 
      \end{lem}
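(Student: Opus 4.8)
The plan is to reduce the statement to the bivariate Gaussian case, where the identity is the classical orthant–probability (Sheppard) formula.

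First I would pass to the bivariate margin. By Proposition~\ref{prop:margcond}(i) we have $(X_i,X_j)\sim E(\mu_{ij},\Sigma_{ij,ij})$, and $\rho(X_i,X_j)$ is exactly the off–diagonal correlation of $\Sigma_{ij,ij}$, so it suffices to treat $d=2$; moreover Kendall's tau depends on $X$ only through $X-X'$, so I may assume $\mu=0$. Since $X$ is continuous, $X_i-X_i'$ is continuous and symmetric about $0$ (it has the same law as $X_i'-X_i$), whence ${\rm sign}(X_i-X_i')$ equals $\pm1$ with probability $\tfrac12$ each; thus it has mean $0$, variance $1$, and the correlation collapses to an expectation:
$$
\tau(X_i,X_j)\;=\;\E\big[{\rm sign}\big((X_i-X_i')(X_j-X_j')\big)\big]\;=\;4\,\Pr(X_i>X_i',\,X_j>X_j')-1,
$$
the last step using that $X-X'$ is symmetric about the origin, so its four sign orthants pair up.

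The key step is to identify the law of $V:=X-X'$. Using the stochastic representation $X=\Sigma_{ij,ij}^{1/2}(\xi U)$ and an independent copy $X'=\Sigma_{ij,ij}^{1/2}(\xi'U')$, one has $V=\Sigma_{ij,ij}^{1/2}(\xi U-\xi'U')$; since $\xi U$ and $\xi'U'$ are each rotationally invariant in $\R^2$, so is their difference, hence $\xi U-\xi'U'=R\,\tilde U$ with $\tilde U$ uniform on the unit circle, $R\ge 0$ independent of $\tilde U$, and $R>0$ a.s.\ (equivalently $\Pr(X=X')=0$, which holds since $X$ is continuous). Thus $V\sim E(0,\Sigma_{ij,ij})$: the difference is again elliptical with the same shape matrix. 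In particular ${\rm sign}(V_iV_j)$ has the same law as ${\rm sign}\big((\Sigma_{ij,ij}^{1/2}\tilde U)_i(\Sigma_{ij,ij}^{1/2}\tilde U)_j\big)$, which is precisely what the Gaussian representation $Z=\sqrt{D^2}\,\Sigma_{ij,ij}^{1/2}U$ yields for $Z\sim N(0,\Sigma_{ij,ij})$; so the orthant probability above coincides with that of the bivariate normal, and one could simply invoke Sheppard's formula $\Pr(Z_i>0,Z_j>0)=\tfrac14+\tfrac1{2\pi}\arcsin\rho$ at this point.

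To keep things self-contained I would instead evaluate $\Pr\big((\Sigma_{ij,ij}^{1/2}\tilde U)_i>0,(\Sigma_{ij,ij}^{1/2}\tilde U)_j>0\big)$ directly. Let $a,b\in\R^2$ be the rows of $\Sigma_{ij,ij}^{1/2}$ indexed by $i$ and $j$. Each of $\{a^T\tilde U>0\}$ and $\{b^T\tilde U>0\}$ is a semicircle, and the arc on which both hold has length $\pi-\theta$, where $\theta\in[0,\pi]$ is the angle between $a$ and $b$; since $a^Tb=(\Sigma_{ij,ij}^{1/2}\Sigma_{ij,ij}^{1/2})_{ij}=\Sigma_{ij}$, $\|a\|^2=\Sigma_{ii}$ and $\|b\|^2=\Sigma_{jj}$, we get $\cos\theta=\rho(X_i,X_j)$. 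Hence
$$
\tau(X_i,X_j)\;=\;4\cdot\frac{\pi-\arccos\rho(X_i,X_j)}{2\pi}-1\;=\;1-\frac{2}{\pi}\arccos\rho(X_i,X_j)\;=\;\frac{2}{\pi}\arcsin\rho(X_i,X_j),
$$
using $\arccos t+\arcsin t=\tfrac\pi2$. The only delicate point is the third paragraph: verifying that the difference of two independent elliptical vectors with shape matrix $\Sigma_{ij,ij}$ is again elliptical with the same shape, and that its radial part is a.s.\ positive so that the signs are well defined; everything else is routine bookkeeping.
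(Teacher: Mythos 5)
Your argument is correct. Note, however, that the paper does not prove this lemma at all: it is stated with a citation to \cite{lindskog2003kendall} and used as a black box, so there is no ``paper proof'' to compare against. What you have written is essentially the classical argument from that reference: reduce to the bivariate margin, rewrite $\tau$ as $4\Pr(V_i>0,V_j>0)-1$ for $V=X-X'$, observe that $V$ is again elliptical with the same shape matrix $\Sigma_{ij,ij}$ (your rotational-invariance argument for $\xi U-\xi'U'$ is the right one; the characteristic-function computation $|\phi(t^T\Sigma t)|^2$ gives the same conclusion in one line), and then compute the orthant probability of a uniform direction on the circle, which yields Sheppard's formula without any Gaussian detour. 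All steps check out: the two semicircles $\{a^T\tilde U>0\}$ and $\{b^T\tilde U>0\}$ do intersect in an arc of length $\pi-\theta$ with $\cos\theta=\rho(X_i,X_j)$, and the boundary events have measure zero. The one hypothesis you correctly flag --- that $R>0$ a.s., equivalently $\Pr(X_i=X_i')=0$ --- is exactly where the continuity assumption in the paper's definition of Kendall's tau enters; without it the identity needs a correction term (as in the general statement of \cite{lindskog2003kendall}), so it is worth keeping that remark explicit.
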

Let $Y \sim \TE(\mu, \Sigma)$, so that $X=f(Y) \sim E(\mu, \Sigma)$. Since Kendall's tau is invariant under strictly increasing transformations,
$$
\tau(Y_i,Y_j)\;=\;\tau(X_i,X_j)\;=\;\frac{2}{\pi}\arcsin(\rho(X_i,X_j)).
$$
Thus $\rho(X_i,X_j)\;=\;\sin(\tfrac{\pi}{2}\tau(Y_i,Y_j))$ and one can learn the correlation matrix {of $X=f(Y)$ from the observations of $Y$} without learning $f$ {simply by estimating $\tau(Y_i,Y_j)$ and using the above formula that deterministically relates $\rho(X_i,X_j)$ with $\tau(Y_i,Y_j)$}.

Below is another basic corollary of Lemma~\ref{lem:tau} {and the  fact that Kendall's tau  is invariant under monotone transformations}.
\begin{prop}
	If $Y\sim {\rm TE}(0,\Sigma)$ then $\Sigma_{ij}=0$ if and only if $\tau(g(Y_i),h(Y_j))=0$ for all strictly increasing functions $g,h:\R\to \R$. Moreover, $\Sigma_{ij}\geq 0$ if and only if $\tau(g(Y_i),h(Y_j))\geq 0$ for all strictly increasing $g,h:\R\to \R$.
\end{prop}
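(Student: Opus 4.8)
The plan is to reduce both assertions to the corresponding statements about the latent elliptical vector $X=f(Y)$, where $f_1,\ldots,f_d$ are the (strictly increasing) functions defining the transelliptical distribution, and then invoke the characterizations already established. Since Kendall's tau is invariant under componentwise strictly increasing transformations, for any strictly increasing $g,h:\R\to\R$ we have $\tau(g(Y_i),h(Y_j))=\tau\big((g\circ f_i^{-1})(X_i),(h\circ f_j^{-1})(X_j)\big)$, and $g\circ f_i^{-1}$, $h\circ f_j^{-1}$ range over all strictly increasing functions as $g,h$ do. So it suffices to prove the proposition in the elliptical case $X\sim E(0,\Sigma)$: $\Sigma_{ij}=0$ iff $\tau(\tilde g(X_i),\tilde h(X_j))=0$ for all strictly increasing $\tilde g,\tilde h$, and $\Sigma_{ij}\ge 0$ iff $\tau(\tilde g(X_i),\tilde h(X_j))\ge 0$ for all such $\tilde g,\tilde h$.

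For the first equivalence, the ``only if'' direction is immediate: if $\Sigma_{ij}=0$ then by Proposition~\ref{prop:margcond}(i) the pair $(X_i,X_j)$ is bivariate elliptical with diagonal dispersion matrix, so by Lemma~\ref{lem:tau} $\tau(X_i,X_j)=\tfrac{2}{\pi}\arcsin(0)=0$; but $(\tilde g(X_i),\tilde h(X_j))$ has the same Kendall's tau by invariance. The ``if'' direction is even easier since one may simply take $\tilde g,\tilde h$ to be the identity: $\tau(X_i,X_j)=0$ forces $\arcsin(\rho(X_i,X_j))=0$, hence $\rho(X_i,X_j)=0$, i.e.\ $\Sigma_{ij}=0$. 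For the second equivalence the reasoning is parallel: $\arcsin$ (and hence $\sin$) is strictly increasing through the origin on $[-1,1]$, so by Lemma~\ref{lem:tau} the sign of $\tau(X_i,X_j)$ equals the sign of $\rho(X_i,X_j)=\Sigma_{ij}/\sqrt{\Sigma_{ii}\Sigma_{jj}}$, which equals the sign of $\Sigma_{ij}$; invariance of Kendall's tau under strictly increasing transformations then gives both directions at once (take identities for ``if'', use invariance for ``only if'').

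The only genuinely delicate point is the interchange between ``for all strictly increasing $g,h$ applied to $Y$'' and ``for all strictly increasing functions applied to $X$''. This rests on two facts: each $f_i$ is a fixed strictly increasing bijection of $\R$ (or of the relevant interval), so $f_i^{-1}$ exists and composition with it is a bijection on the set of strictly increasing functions; and Kendall's tau of a pair depends only on the joint law and is unchanged by componentwise strictly increasing reparametrization — a standard copula fact, already used in Section~\ref{ssec:rankcorrel}. Given these, no computation beyond the elliptical case is needed. I expect this bookkeeping about the range of $g\circ f_i^{-1}$ to be the main (minor) obstacle; everything else follows directly from Lemma~\ref{lem:tau}.
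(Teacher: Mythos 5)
Your proposal is correct and follows the same route the paper intends: the result is stated there as an immediate corollary of Lemma~\ref{lem:tau} combined with the invariance of Kendall's tau under strictly increasing marginal transformations, which is exactly the reduction you carry out. The only stylistic difference is that you track the composition $g\circ f_i^{-1}$ explicitly, whereas one can note more directly that invariance makes $\tau(g(Y_i),h(Y_j))=\tau(Y_i,Y_j)=\tau(X_i,X_j)$ for every admissible $g,h$, collapsing the universal quantifier at once.
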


Define conditional Kendall's correlation as
\begin{equation}\label{eq:kendcond}
\tau(X_i, X_j \mid X_{(ij)})= {\rm corr}(\mbox{sign}(X_i - X_i') ,\mbox{sign}(X_j - X_j') \mid X_{(ij)}),	
\end{equation}
where $(X_i',X_j')$ is an independent copy of $(X_i,X_j)$ from the conditional distribution given $X_{(ij)}$. If $X\sim E(\mu,K^{-1})$ then, by Lemma~\ref{lem:tau} applied to the conditional distribution of $(X_i,X_j)$ given $X_{(ij)}$, we have that
\begin{equation}\label{eq:condt}
\tau(X_i, X_j \mid X_{(ij)})\;=\;\frac{2}{\pi}\arcsin\left(-\frac{K_{ij}}{\sqrt{K_{ii}K_{jj}}}\right).	
\end{equation}
%This suggests an obvious plug-in estimator for conditional Kendall's correlations.

\begin{prop}
	Let $Y \sim \TE(0, K^{-1})$. Then $K_{ij}=0$ if and only if $\tau(g(Y_i), h(Y_j) \mid Y_{(ij)})= 0$ for all strictly increasing $g,h: \mathbb{R} \rightarrow \mathbb{R}$, or equivalently, $\tau(Y_i, Y_j \mid Y_{(ij)})= 0$.
\label{prop:partialcorr_condKendall}
\end{prop}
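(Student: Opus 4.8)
The plan is to transfer the statement to the elliptical vector $X=f(Y)\sim E(0,K^{-1})$ and then quote the conditional Kendall's tau identity (\ref{eq:condt}). The first observation is that each $f_k$ is a strictly increasing (hence Borel-measurable) bijection of $\R$, so $X_{(ij)}$ and $Y_{(ij)}$ generate the same $\sigma$-field; consequently conditioning on $Y_{(ij)}$ is the same as conditioning on $X_{(ij)}$, and by Proposition~\ref{prop:margcond}(ii) the conditional law of $(X_i,X_j)$ given $X_{(ij)}=x$ is the elliptical distribution $E(\mu_{ij|(ij)},\Sigma_{ij,ij|(ij)})$ for each $x$.

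Next I would use the invariance of Kendall's tau under coordinatewise strictly increasing transformations, applied to this conditional distribution. For strictly increasing $g,h:\R\to\R$ write $g(Y_i)=(g\circ f_i^{-1})(X_i)$ and $h(Y_j)=(h\circ f_j^{-1})(X_j)$; since $g\circ f_i^{-1}$ and $h\circ f_j^{-1}$ are again strictly increasing, conditional invariance gives
\[
\tau\big(g(Y_i),h(Y_j)\mid Y_{(ij)}\big)=\tau\big(X_i,X_j\mid X_{(ij)}\big),
\]
and by Lemma~\ref{lem:tau} applied conditionally (equation (\ref{eq:condt})) the right-hand side equals $\tfrac{2}{\pi}\arcsin\!\big(-K_{ij}/\sqrt{K_{ii}K_{jj}}\big)$. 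In particular this value depends neither on $g,h$ nor on the conditioning value, and running the same computation with $g,h$ the identity shows it also equals $\tau(Y_i,Y_j\mid Y_{(ij)})$; hence the ``for all strictly increasing $g,h$'' condition is equivalent to the single condition $\tau(Y_i,Y_j\mid Y_{(ij)})=0$.

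Finally I would read off the equivalences. Since $K_{ii},K_{jj}>0$ and $\arcsin t=0$ only for $t=0$, the common value above vanishes if and only if $K_{ij}=0$. Thus $K_{ij}=0$ implies $\tau(g(Y_i),h(Y_j)\mid Y_{(ij)})=0$ for every strictly increasing $g,h$, and conversely if this holds for all such $g,h$ (equivalently, just for $g=h$ the identity) then $\arcsin\!\big(-K_{ij}/\sqrt{K_{ii}K_{jj}}\big)=0$, forcing $K_{ij}=0$. The only step that requires a moment's care is the conditional version of Kendall's tau invariance, but since the conditional distribution given $X_{(ij)}=x$ is an honest probability law for each $x$, the unconditional invariance applies verbatim to it; everything else is bookkeeping with the bijections $f_k$, so I do not anticipate a genuine obstacle.
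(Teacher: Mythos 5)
Your proposal is correct and follows essentially the same route as the paper: transfer to $X=f(Y)$, invoke invariance of Kendall's tau under strictly increasing marginal transformations applied to the conditional distribution of $(X_i,X_j)$ given $X_{(ij)}$ (which is elliptical by Proposition~\ref{prop:margcond}), and conclude via Lemma~\ref{lem:tau} in the form of (\ref{eq:condt}). Your write-up is if anything slightly more careful than the paper's, making explicit the $\sigma$-field identification of $Y_{(ij)}$ with $X_{(ij)}$ and the compositions $g\circ f_i^{-1}$, $h\circ f_j^{-1}$.
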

\begin{proof}
	The last equivalence follows from invariance of $\tau$ under strictly monotone transformations. Let $X= f(Y) \sim E(\mu, \Sigma)$. 
Suppose $\tau(g(Y_i), h(Y_j) \mid Y_{(ij)})=0$ for all strictly increasing $g,h$, then
$0=\tau(Y_i, Y_j \mid Y_{(ij)})= \tau(X_i, X_j \mid Y_{(ij)})$,
since Kendall's tau is invariant to monotone transformations, which by Lemma~\ref{lem:tau} applied to the conditional distribution of $(X_i,X_j)$ given $X_{(ij)}$ implies that $\rho_{ij\cdot V\setminus \{i,j\}}=0$ (c.f. Proposition~\ref{prop:margcond}), or equivalently, $K_{ij}=0$.
To prove the reverse implication, suppose that $K_{ij}=0$, then, by Lemma~\ref{lem:tau}, $\tau(X_i, X_j \mid Y_{(ij)})=0$
and hence $\tau(g(Y_i), h(Y_j) \mid Y_{(ij)})=0$ for all strictly increasing $g,h$.
\end{proof}

\section{Positive dependence in elliptical distributions}\label{sec:TP}

In this section we study PGs in elliptical distributions when one imposes positive dependence.  We begin by recalling two important notions of multivariate positive dependence. We show that {neither} of them is meaningful to learn structure in elliptical PGs. This leads to relaxations given by elliptical distributions whose partial correlations are all nonnegative, which we refer to as positive partial correlation graph (PPG). We then complement the interpretation of PPGs offered by the characterizations in Section~\ref{sec:graphical} by studying positive dependence properties embedded within PPGs.

\subsection{Positive dependence}\label{sec:posdep}

Let {$W$} be a $d$-variate continuous random vector with density function $f$. 
%\begin{defn}
%$W$ is associated (A) if and only if $\cov(g(W),h(W))\geq 0$ for any two coordinate non-decreasing functions $g,h:\R^d\to \R$ for which this covariance exists.
% \end{defn}

\begin{defn}
A random vector {$W$} (or its density function {$p$}) is multivariate totally positive of order two ($\mtp$) if and only if
\begin{equation}\label{eq:mtp2}
	{p(w) p(w')}\;\leq\; {p(\min(w,w'))p(\max(w,w'))} \qquad\mbox{for all } w,w' \in \R^d,
\end{equation}
where {$\min(w,w')=(\min(w_1,w_1'),\ldots, \min(w_d,w_d'))$} is the coordinatewise minimum and {$\max(w,w')=(\max(w_1,w_1'),\ldots, \max(w_d,w_d'))$} the coordinatewise maximum of {$w$ and $w'$}. 
\end{defn}

\begin{defn}  ${W}$ is \emph{conditionally increasing} (CI) if for \emph{every} $i\in V$ and $C\subseteq V\setminus \{i\}$ the conditional expectation $\E(h({W}_i)|{W}_C)$ is an increasing function of ${W}_C$ for every increasing function $h:\R\to \R$. 
\end{defn}

 Good general references are \cite{karlin1980classes} for $\mtp$, \cite{muller2001stochastic} for CI.

\begin{prop}\label{prop:mtpbasic}
	If ${W}$ is $\mtp$/CI then each marginal distribution is $\mtp$/CI. If ${W}$ is $\mtp$/CI then each conditional distribution is $\mtp$/CI.
\end{prop}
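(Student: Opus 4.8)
The plan is to treat the two positivity notions ($\mtp$ and CI) and the two closure operations (marginalization and conditioning) largely separately, since the arguments are of quite different flavours. For $\mtp$, the natural route is to exploit the functional form of \eqref{eq:mtp2} directly at the level of densities. For CI, the route is instead via the defining property about conditional expectations of monotone functions. I would organize the proof into four short steps: $\mtp$ under conditioning, $\mtp$ under marginalization, CI under conditioning, CI under marginalization.

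First, for $\mtp$ under conditioning: if $W=(W_A,W_B)$ and we fix $W_B=w_B$, the conditional density of $W_A$ is $p(w_A\mid w_B)=p(w_A,w_B)/p(w_B)$, which as a function of $w_A$ is proportional (with a constant independent of $w_A$) to $p(w_A,w_B)$. Plugging pairs $(w_A,w_B),(w_A',w_B)$ into \eqref{eq:mtp2} and noting that $\min$ and $\max$ act coordinatewise, so the $B$-coordinates of $\min$ and $\max$ of these two points are both just $w_B$, the $\mtp$ inequality for $p(\cdot\mid w_B)$ follows immediately from the $\mtp$ inequality for $p$, the $p(w_B)^2$ factors cancelling on both sides. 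Second, for $\mtp$ under marginalization: this is the substantive classical fact that integrating out a variable preserves $\mtp$; it is exactly Ahlswede--Daykin / the FKG-type ``preservation under integration'' statement (see \cite{karlin1980classes}). I would invoke it by reducing to integrating out one coordinate at a time and citing the standard result, since reproving the Ahlswede--Daykin inequality here would be out of scope.

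Third, for CI under conditioning: suppose $W$ is CI and fix a subset $B\subseteq V$ and a value $w_B$; we must show the conditional law of $W_{V\setminus B}$ given $W_B=w_B$ is CI. For $i\in V\setminus B$ and $C\subseteq (V\setminus B)\setminus\{i\}$ and increasing $h$, the relevant conditional expectation is $\E(h(W_i)\mid W_C, W_B=w_B)$; by the CI hypothesis applied to $W$ with conditioning set $C\cup B$, the map $(w_C,w_B)\mapsto \E(h(W_i)\mid W_C=w_C,W_B=w_B)$ is increasing in the joint argument, hence in particular increasing in $w_C$ for $w_B$ held fixed, which is exactly what CI of the conditional law requires. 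Fourth, CI under marginalization: let $W$ be CI and let $A\subseteq V$; to see $W_A$ is CI, take $i\in A$, $C\subseteq A\setminus\{i\}$, increasing $h$, and observe $\E(h(W_i)\mid W_C)$ is the same quantity whether computed in $W$ or in the marginal $W_A$, and it is increasing in $W_C$ by the CI property of $W$ applied with conditioning set $C$ (which is allowed since $C\subseteq A\setminus\{i\}\subseteq V\setminus\{i\}$).

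The only genuine obstacle is the $\mtp$-under-marginalization step, which is not a one-line manipulation but the deep combinatorial lattice inequality; everything else is bookkeeping about how $\min$, $\max$, and conditioning interact with the definitions. I would therefore state that step by citing \cite{karlin1980classes} (or the Ahlswede--Daykin inequality) and spend the written proof on the three easy steps, which are fully self-contained given Proposition \ref{prop:margcond}-style manipulations of conditional densities.
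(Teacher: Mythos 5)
Your proposal is correct and follows essentially the same route as the paper: the marginal and conditional CI statements and the conditional $\mtp$ statement are handled directly from the definitions (your density cancellation and conditioning-set arguments are exactly the "follows from the definitions" steps the paper leaves implicit), while the marginal $\mtp$ statement is delegated to the combinatorial preservation-under-integration result cited from \cite{karlin1980classes}. Your write-up simply fills in the details the paper's one-line proof omits.
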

The proof for the marginal distribution for CI follows from the definition. For the $\mtp$ property it relies on smart combinatorial arguments; see \cite{karlin1980classes}. The statement for conditional distributions follows from the definitions. 

Another well-known result is that these positivity notions are strictly related, and closed under monotone transforms; see Theorem~3.3 and Proposition~3.5 in \cite{muller2001stochastic} as well as Proposition 3.1 in \cite{fallat:etal:17}.
\begin{thm}\label{th:mtpcia} If a random vector is $\mtp$ then it is conditionally increasing.
%	$$
%	\mtp \quad \Longrightarrow\quad CI \quad \Longrightarrow\quad A
%	$$
\end{thm}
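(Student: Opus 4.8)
The plan is to prove that $\mtp$ implies conditionally increasing (CI) by exploiting the well-known fact that the $\mtp$ property is inherited by conditional distributions (Proposition~\ref{prop:mtpbasic}) and the elementary connection between total positivity of order two in two variables and monotone likelihood ratio ordering. First I would reduce the problem: to check that $W$ is CI I must show, for every $i\in V$ and every $C\subseteq V\setminus\{i\}$, that $\E(h(W_i)\mid W_C=w_C)$ is increasing in $w_C$ for each increasing $h$. Fixing $i$ and $C$, consider the joint density $p(w_i,w_C)$ of $(W_i,W_C)$; by Proposition~\ref{prop:mtpbasic} (marginal statement) this density is again $\mtp$, since $(W_i,W_C)$ is a sub-vector of $W$.

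The key step is to show that an $\mtp$ density $p(w_i,w_C)$ in the variables $(w_i,w_C)$ is, in particular, $\mathrm{TP}_2$ jointly in $w_i$ versus the block $w_C$ in the sense of the monotone likelihood ratio: for $w_C\le w_C'$ (coordinatewise) the conditional densities $p(w_i\mid w_C)$ and $p(w_i\mid w_C')$ satisfy $p(w_i\mid w_C)\,p(w_i'\mid w_C')\ge p(w_i'\mid w_C)\,p(w_i\mid w_C')$ whenever $w_i\le w_i'$. This follows directly from the defining inequality \eqref{eq:mtp2} applied to the pair of points $(w_i,w_C)$ and $(w_i',w_C')$: the coordinatewise min is $(w_i,w_C)$ and the coordinatewise max is $(w_i',w_C')$ since $w_i\le w_i'$ and $w_C\le w_C'$, and the normalizing denominators (the marginals of $W_C$) cancel from both sides. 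Thus $w_C\mapsto (\text{law of }W_i\mid W_C=w_C)$ is increasing in the monotone likelihood ratio order, hence a fortiori in the usual stochastic order. By the standard characterization of stochastic order, this means $\E(h(W_i)\mid W_C=w_C)$ is increasing in $w_C$ for every increasing $h$, which is exactly the CI condition for the pair $(i,C)$.

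Since $i$ and $C$ were arbitrary, this establishes that $W$ is CI. The main obstacle — or rather the only subtlety — is the passage from the coordinatewise min/max inequality \eqref{eq:mtp2} in the full set of variables to a clean $\mathrm{TP}_2$ statement in the single variable $w_i$ against the vector $w_C$; one must be careful that comparing two configurations of $w_C$ that are \emph{not} coordinatewise ordered gives no information, so the monotone likelihood ratio ordering of the conditionals is only asserted along a coordinatewise-monotone path in $w_C$. However, increasingness of $w_C\mapsto\E(h(W_i)\mid W_C=w_C)$ is itself a coordinatewise statement (increasing in each coordinate with the others fixed, equivalently monotone along coordinatewise-ordered pairs), so this is exactly what is needed. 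A secondary point is handling densities that vanish on part of the space, where conditional densities must be defined carefully; this is routine and is typically dispatched by noting that $\mtp$ densities have a product-of-intervals (lattice) support or by a limiting argument, and I would simply cite \cite{karlin1980classes} or \cite{muller2001stochastic} for this technical point rather than belabor it.
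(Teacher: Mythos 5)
Your argument is essentially correct, and it is worth noting that the paper itself gives no proof of this statement: it simply cites Theorem~3.3 and Proposition~3.5 of \cite{muller2001stochastic} (see also Proposition~3.1 of \cite{fallat:etal:17}). What you have written out is the standard argument underlying those references: reduce to the marginal $(W_i,W_C)$ via the (non-trivial, Karlin--Rinott) fact that $\mtp$ is preserved under marginalization, observe that $\mtp$ of $(W_i,W_C)$ forces the conditional laws $p(\cdot\mid w_C)$ to be increasing in the likelihood-ratio order along coordinatewise-ordered $w_C\le w_C'$, and conclude via likelihood-ratio order $\Rightarrow$ stochastic order $\Rightarrow$ monotonicity of $\E(h(W_i)\mid W_C=w_C)$ for increasing $h$. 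One local slip to fix: the inequality
$p(w_i,w_C)\,p(w_i',w_C')\ge p(w_i',w_C)\,p(w_i,w_C')$ does \emph{not} come from applying \eqref{eq:mtp2} to the already-ordered pair $(w_i,w_C)$ and $(w_i',w_C')$ --- for that pair the min and max coincide with the points themselves and \eqref{eq:mtp2} is a tautology. You must apply \eqref{eq:mtp2} to the \emph{crossed} pair $w=(w_i',w_C)$ and $w'=(w_i,w_C')$, whose coordinatewise min is $(w_i,w_C)$ and max is $(w_i',w_C')$; this yields exactly the displayed inequality, and the rest of your argument (cancelling the marginals of $W_C$, handling supports by the lattice-support property of $\mtp$ densities) goes through. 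With that correction your proof is a complete and self-contained justification of a result the paper only quotes.
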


\begin{prop}\label{prop:mtpcia_monotonetransforms}
{Let $g(W)=(g_1(W_1),\ldots,g_d(W_d))$ where $g_1,\ldots,g_d$ are strictly increasing.
Then $W$ is CI $\Leftrightarrow g(W)$ is CI.
Also $W$ is $\mtp$ $\Leftrightarrow g(W)$ is $\mtp$.
}
%Let $Y=(Y_1,\ldots,Y_d)$ and $X=(X_1,\ldots,X_d)=(f_1(Y_1),\ldots,f_d(Y_d))$ where $f_i$ are strictly monotone.
%%Then $Y$ is A $\Leftrightarrow X$ is A.
%Then $Y$ is CI $\Leftrightarrow X$ is CI.
%Also $Y$ is $\mtp$ $\Leftrightarrow X$ is $\mtp$.
\end{prop}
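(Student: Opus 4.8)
The plan is to prove the two equivalences in Proposition~\ref{prop:mtpcia_monotonetransforms} separately, although both hinge on the same basic fact: a strictly increasing coordinatewise map $g=(g_1,\ldots,g_d)$ is an order isomorphism of $\R^d$ (with the coordinatewise partial order) onto its image, and in particular it commutes with coordinatewise $\min$ and $\max$. Since the statements are stated as $\Leftrightarrow$ and $g$ has a strictly increasing inverse on its range, it suffices in each case to prove one direction; the converse follows by applying the same argument to $g^{-1}$.

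For the CI part, I would argue directly from the definition. Fix $i\in V$, $C\subseteq V\setminus\{i\}$, and an increasing $h:\R\to\R$. Using that $g_i$ is a bijection onto its range, write $W_i = g_i^{-1}(g_i(W_i))$, so that $\E(h(W_i)\mid W_C) = \E\big((h\circ g_i^{-1})(g_i(W_i))\mid W_C\big)$. Now $h\circ g_i^{-1}$ is increasing, and since each $g_k$ for $k\in C$ is a strictly increasing bijection, the $\sigma$-fields generated by $W_C$ and by $g_C(W_C)=(g_k(W_k))_{k\in C}$ coincide, so this equals $\E\big((h\circ g_i^{-1})(g_i(W_i))\mid g_C(W_C)\big)$. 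By the CI property of $g(W)$ this is an increasing function of $g_C(W_C)$; composing with the increasing map $g_C^{-1}$ shows it is an increasing function of $W_C$. Hence $W$ is CI. The converse is symmetric via $g^{-1}$.

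For the $\mtp$ part, the cleanest route is to reduce to densities and use that $g$ commutes with $\min$ and $\max$. Assuming $W$ has density $p_W$ and each $g_k$ is, say, absolutely continuous with positive derivative (or more generally invoking the change-of-variables identity that $g(W)$ has density $p_{g(W)}(u)=p_W(g^{-1}(u))\prod_k (g_k^{-1})'(u_k)$ on the image), the key observation is that the Jacobian factor $\prod_k(g_k^{-1})'(u_k)$ is a product of one-variable functions, hence it is itself $\mtp$ with equality in \eqref{eq:mtp2} (for a product of univariate positive functions, both sides equal $\prod_k a_k(u_k)a_k(u_k')$). Therefore, writing $u=g(w)$, $u'=g(w')$ and using $\min(u,u')=g(\min(w,w'))$, $\max(u,u')=g(\max(w,w'))$, the $\mtp$ inequality \eqref{eq:mtp2} for $p_{g(W)}$ at $(u,u')$ is exactly the $\mtp$ inequality for $p_W$ at $(w,w')$ multiplied by the (equality) Jacobian relation. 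Since $w,w'$ range over all of $\R^d$ as $u,u'$ range over the image, $g(W)$ is $\mtp$ iff $W$ is.

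The main obstacle I anticipate is purely technical rather than conceptual: handling regularity of the $g_k$ so that densities transform correctly (or avoiding densities altogether by working with the definition of $\mtp$ via the lattice inequality for the distribution, which is the formulation that behaves functorially under order isomorphisms). If one wants to stay fully general — $g_k$ merely strictly increasing, not necessarily differentiable or even continuous — the density argument needs care, and it is probably cleanest to cite Proposition~3.5 in \cite{muller2001stochastic} and Proposition~3.1 in \cite{fallat:etal:17}, which is exactly what the paragraph preceding the statement already does; the proof sketch above is then the self-contained alternative for the absolutely continuous case that the paper's running conventions ($p$ denotes densities) make natural.
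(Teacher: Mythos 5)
Your argument is correct, and it is worth noting that the paper itself gives no proof of this proposition at all: it simply points to Theorem~3.3 and Proposition~3.5 of \cite{muller2001stochastic} and Proposition~3.1 of \cite{fallat:etal:17} in the preceding paragraph. So you are supplying a self-contained argument where the authors rely on citation, and both halves of your argument are the standard ones. For the CI part your reasoning is sound; two small points of hygiene: (i) $h\circ g_i^{-1}$ is a priori defined only on the range of $g_i$, so you should extend it monotonically to all of $\R$ to match the definition of CI literally (this is harmless since $g_i(W_i)$ lives in that range almost surely); (ii) at the last step you want to compose the increasing function of $g_C(W_C)$ with $g_C$ (not $g_C^{-1}$) to read it as a function of $W_C$ --- the conclusion is unchanged since both maps are increasing, but the direction of the substitution as written is off. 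For the $\mtp$ part, your key observation --- that the Jacobian factor $\prod_k (g_k^{-1})'(u_k)$ is a product of univariate functions and hence satisfies \eqref{eq:mtp2} with equality, while $g$ commutes with coordinatewise $\min$ and $\max$ --- is exactly the right mechanism, and your handling of points outside the image of $g$ (where the transformed density vanishes, making the inequality trivial) closes the argument in the absolutely continuous, differentiable case. You correctly flag that full generality (merely strictly increasing $g_k$) requires either more care or an appeal to the lattice formulation in the cited references; since the paper restricts attention to continuous vectors with densities and in practice invokes the literature for this fact, your sketch is an acceptable and indeed more informative substitute for what the paper provides.
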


In the Gaussian case both condition (\ref{eq:mtp2}) and CI simplify to an explicit constraint on the inverse covariance $K$. %, whereas association is a strictly weaker property.
A symmetric positive definite matrix $K$ is called an M-matrix if $K_{ij}\leq 0$ for all $i\neq j$. Denote the set of inverses of M-matrices by $\im$. Directly from (\ref{eq:partialcor}), $\Sigma\in \im$ if and only if all partial correlations $\rho_{ij\cdot V\setminus\{i,j\}}$ are nonnegative.
\begin{prop}[Proposition~3.6 in \cite{muller2001stochastic}]\label{prop:gaussequiv}
	Suppose $X$ is a Gaussian vector with covariance $\Sigma$ then
$$
\Sigma\in \im\quad\Longleftrightarrow\quad X \mbox{ is }\mtp \quad\Longleftrightarrow\quad X\mbox{ is }CI.
$$	
\end{prop}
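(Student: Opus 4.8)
The plan is to establish the cycle of implications
\[
\Sigma\in\im\;\Longrightarrow\; X\text{ is }\mtp\;\Longrightarrow\; X\text{ is CI}\;\Longrightarrow\;\Sigma\in\im .
\]
The middle implication is precisely Theorem~\ref{th:mtpcia}, so only the first and the last require argument, and both are short once one uses the Gaussian form of the density and the conditional mean formula (\ref{eq:meaninK}).

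For $\Sigma\in\im\Rightarrow\mtp$ I would work with the log-density. Since $p(x)\propto\exp(-\tfrac12(x-\mu)^TK(x-\mu))$, writing $a=x-\mu$ and $b=x'-\mu$ and noting that $\min(x,x')-\mu$ and $\max(x,x')-\mu$ are the coordinatewise minimum $a\wedge b$ and maximum $a\vee b$ of $a$ and $b$, condition (\ref{eq:mtp2}) is equivalent to
\[
a^TKa+b^TKb\;\ge\;(a\wedge b)^TK(a\wedge b)+(a\vee b)^TK(a\vee b).
\]
Expanding, and using that for each coordinate $i$ the multiset $\{(a\wedge b)_i,(a\vee b)_i\}$ equals $\{a_i,b_i\}$, the diagonal terms cancel and the difference of the two sides equals $\sum_{i\neq j}K_{ij}E_{ij}$, where $E_{ij}=a_ia_j+b_ib_j-(a\wedge b)_i(a\wedge b)_j-(a\vee b)_i(a\vee b)_j$. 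A short case check on the orderings of $a_i$ versus $b_i$ and of $a_j$ versus $b_j$ shows $E_{ij}=\min\{0,(a_i-b_i)(a_j-b_j)\}\le 0$: it vanishes unless $a_i-b_i$ and $a_j-b_j$ have strictly opposite signs, in which case $E_{ij}=(a_i-b_i)(a_j-b_j)<0$. Since $\Sigma\in\im$ means $K_{ij}\le 0$ for all $i\neq j$, every term $K_{ij}E_{ij}$ is nonnegative, so the displayed inequality holds and $X$ is $\mtp$.

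For CI $\Rightarrow\Sigma\in\im$ I would invoke the CI property with $C=V\setminus\{i\}$ and the increasing function $h(t)=t$: then $\E(X_i\mid X_{(i)})$ must be coordinatewise increasing in $X_{(i)}$. By (\ref{eq:meaninK}), $\E(X_i\mid X_{(i)})=\mu_i-\tfrac1{K_{ii}}K_{i,(i)}(X_{(i)}-\mu_{(i)})$, whose coefficient on $X_j$ equals $-K_{ij}/K_{ii}$; monotonicity together with $K_{ii}>0$ forces $K_{ij}\le 0$ for every $j\neq i$. Hence $K$ is an M-matrix and $\Sigma\in\im$, which closes the cycle and yields all three equivalences.

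The only genuinely non-routine point is the pointwise inequality $E_{ij}\le 0$; the rest is bookkeeping (cancellation of the diagonal terms, the reduction to the log-density, and reading off a coefficient from (\ref{eq:meaninK})). One could instead try to prove $\Sigma\in\im\Rightarrow$ CI directly, but that would require $\Sigma_{iC}\Sigma_{CC}^{-1}\ge 0$ entrywise for an arbitrary conditioning set $C$ — true for inverse M-matrices but less transparent — so I would keep the cyclic structure above.
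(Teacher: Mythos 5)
Your argument is correct. Note that the paper itself gives no proof of this proposition---it is quoted from M\"uller and Scarsini (Proposition~3.6 of \cite{muller2001stochastic})---so there is no in-paper argument to compare against; your cyclic scheme $\Sigma\in\im\Rightarrow\mtp\Rightarrow\text{CI}\Rightarrow\Sigma\in\im$ is the standard self-contained route, with the middle arrow supplied by Theorem~\ref{th:mtpcia} exactly as you say. I checked the two nontrivial steps: the identity $E_{ij}=\min\{0,(a_i-b_i)(a_j-b_j)\}$ and the cancellation of diagonal terms are right (both follow from $\{(a\wedge b)_i,(a\vee b)_i\}=\{a_i,b_i\}$ as multisets), and the sign of the inequality matches the $\mtp$ definition after taking logarithms of the strictly positive Gaussian density, which is legitimate since the paper assumes $\Sigma$ positive definite. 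The final step, reading off the coefficient $-K_{ij}/K_{ii}$ from \eqref{eq:meaninK} and using $K_{ii}>0$, is the same observation the paper records in Remark~\ref{rem:positivePC} (there stated for general elliptical $X$), so that piece is consistent with how the authors themselves use the CI property.
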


%\begin{prop}[\cite{pitt1982positively}]\label{prop:pitt}
%		Suppose $X$ is a zero-mean Gaussian vector with covariance $\Sigma$ then
%		$$
%		X\mbox{ is associated }\quad\Longleftrightarrow\quad \Sigma_{ij}\geq 0\;\; \mbox{ for all }\;\;i,j.
%		$$
% \end{prop}

\subsection{Positive elliptical distributions}\label{sec;impossible}

We first show in Theorem~\ref{th:notCI} that the positive dependence notions reviewed in Section~\ref{sec;impossible} are not useful in our {elliptical} setting. If $K$ has any zeroes then $X$ cannot be CI (hence neither $\mtp$, from Theorem~\ref{th:mtpcia}) {unless $X$ is Gaussian}.
The same impossibility result applies to transelliptical families (outside the nonparanormal sub-family). As a consequence, it is not possible to learn structure (remove edges) of a non-normal elliptical graphical model under these positivity constraints.
%\footnote{DR. Maybe worth emphasizing Theorem \ref{th:mtpcia} in our reply to the AE? For purposes of structural learning, this result is pretty much definitive. }

Even if one were to forsake structural learning and focus on the fully dense graph with no missing edges, it is not possible to find $\mtp$/CI transelliptical distributions, except in very restrictive cases. For example, Proposition \ref{prop:notCI} shows that there are no $\mtp$/CI t-distributions. We defer a deeper analysis to Section~\ref{sec:abdous}, where we fully characterize the elliptical $\mtp$ class and show that it is highly restrictive, particularly as $d$ grows.

We conclude the current section by defining positive transelliptical distributions to be those for which $\rho_{ij\cdot V\setminus \{i,j\}}\geq 0$ for all $i,j\in V$  (equivalently, $K$ being an M-matrix, following upon \cite{agrawal2019covariance}) and showing basic properties such as closedness under margins, conditionals and increasing transforms. We also give properties important for inference, such as positivity of partial correlations given any conditioning set, {partial} faithfulness and that Simpson's paradox cannot occur.

\begin{rem}\label{rem:positivePC}
{Suppose $X\sim E(\mu,K^{-1})$. }From (\ref{eq:meaninK}) it follows that $K$ is an M-matrix if and only if for every $i\in V$ the conditional expectation $\E(X_i|X_{(i)})$ is increasing in $X_{(i)}$. Note that, if $X$ is CI then $\E(X_i|X_{(i)})$ must be increasing in $X_{(i)}$ and so, in particular, $\rho_{ij\cdot V\setminus \{i,j\}}\geq 0$ for all $i,j\in V$. 	 This shows that nonnegativity of all partial correlations is a necessary condition for $X$ to be CI and so also for $X$ to be $\mtp$.
\end{rem}

\begin{thm}\label{th:notCI}
	Suppose that $X\sim E(\mu,K^{-1})$ and $X$ is CI. If $K$ has a zero entry then $X$ is Gaussian. 	Further, suppose that $Y \sim \TE(\mu,K^{-1})$ and $Y$ is CI. Let $X=f(Y) \sim E(\mu,K^{-1})$, if $K$ has a zero entry then $X$ is Gaussian.
\end{thm}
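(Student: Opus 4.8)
The plan is to localise the zero of $K$ by passing to the conditional distribution of the relevant pair, which reduces the statement to a purely bivariate claim about a centered elliptical law with diagonal scatter; there the symmetry of elliptical distributions, combined with the CI hypothesis fed two carefully chosen increasing indicator functions, will force outright independence, and Kelker's lemmas then close the argument.

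\emph{First}, suppose $K_{ij}=0$. By Proposition~\ref{prop:mtpbasic} the conditional law of $(X_i,X_j)$ given $X_{(ij)}$ is CI, and by Proposition~\ref{prop:margcond}(ii) it equals $E(\mu_{ij|(ij)},\Sigma_{ij,ij|(ij)})$, where $\Sigma_{ij,ij|(ij)}$ is the inverse of the $2\times2$ principal submatrix of $K$ on $\{i,j\}$; since $K_{ij}=0$ this submatrix, and hence $\Sigma_{ij,ij|(ij)}$, is diagonal. Because Proposition~\ref{prop:mtpcia_monotonetransforms} lets us apply the strictly increasing coordinatewise translation that centers this law, it suffices to prove: \emph{if $\bar X=(\bar X_1,\bar X_2)\sim E(0,\Lambda)$ with $\Lambda$ diagonal is CI, then $\bar X$ is Gaussian}. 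Granting this, the conditional law of $(X_i,X_j)$ given $X_{(ij)}$ is Gaussian for every value of $X_{(ij)}$, and Lemma~\ref{lem:kelker4} upgrades that to $X$ being Gaussian.

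\emph{Next}, for the bivariate claim: by Proposition~\ref{prop:margcond}(ii), $\bar X_1$ given $\bar X_2=x_2$ has law $E(0,\Lambda_{11})$ — the conditional mean vanishes because $\Lambda_{12}=0$ — so it is symmetric about $0$, i.e. $P(\bar X_1\le -t\mid \bar X_2=x_2)=P(\bar X_1\ge t\mid \bar X_2=x_2)$ for all $t$. Fixing $t>0$ and writing $\bar G_{x_2}(t)=P(\bar X_1\ge t\mid \bar X_2=x_2)$, the CI hypothesis applied to the nondecreasing function $\indic_{[t,\infty)}$ makes $x_2\mapsto\bar G_{x_2}(t)$ nondecreasing, while applied to the nondecreasing function $\indic_{(-t,\infty)}$ it makes $x_2\mapsto P(\bar X_1>-t\mid \bar X_2=x_2)=1-\bar G_{x_2}(t)$ nondecreasing; hence $x_2\mapsto\bar G_{x_2}(t)$ is also nonincreasing, so constant. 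Letting $t>0$ vary and using symmetry, the conditional law of $\bar X_1$ given $\bar X_2$ does not depend on $\bar X_2$, that is $\bar X_1\indep\bar X_2$; with $\Lambda$ diagonal, Lemma~\ref{lem:kelker5} then forces $\bar X$ to be Gaussian.

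\emph{Finally}, the transelliptical statement: if $Y\sim\TE(\mu,K^{-1})$ is CI then, since each $f_k$ is strictly increasing, Proposition~\ref{prop:mtpcia_monotonetransforms} gives that $X=f(Y)\sim E(\mu,K^{-1})$ is CI, so the elliptical case just proved applies. The one nonroutine ingredient is the middle step: one must notice that once we condition down to a bivariate, diagonal-scatter elliptical law, its symmetry about the (vanishing) mean makes the CI requirements for $\indic_{[t,\infty)}$ and $\indic_{(-t,\infty)}$ incompatible unless the conditional law is constant in the conditioning variable — the remaining manipulations are routine applications of the closure properties in Propositions~\ref{prop:margcond}, \ref{prop:mtpbasic}, \ref{prop:mtpcia_monotonetransforms} and the characterizations of Gaussianity in Lemmas~\ref{lem:kelker4} and \ref{lem:kelker5}.
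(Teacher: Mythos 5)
Your proof is correct, but it takes a genuinely different route from the paper's. The paper argues as follows: $K_{ij}=0$ gives $\cov(X_i,X_j\mid X_{(ij)})=0$ (Proposition~\ref{prop:condcorr}); the conditional law of $(X_i,X_j)$ given $X_{(ij)}$ is CI (Proposition~\ref{prop:mtpbasic}), hence \emph{associated}; by a result of Newman, an associated pair with zero covariance is independent, so $X_i\indep X_j\mid X_{(ij)}$; Proposition~\ref{prop:nocondind} then forces Gaussianity. You instead never invoke association or Newman's theorem: after the same conditioning step you exploit the symmetry of the (centered, diagonal-scatter) bivariate elliptical conditional law, feeding CI the two indicators $\indic_{[t,\infty)}$ and $\indic_{(-t,\infty)}$ so that $x_2\mapsto P(\bar X_1\ge t\mid \bar X_2=x_2)$ is simultaneously nondecreasing and nonincreasing, hence constant; this yields conditional independence directly, and you close with Kelker's Lemmas~\ref{lem:kelker5} and~\ref{lem:kelker4} rather than with Proposition~\ref{prop:nocondind}. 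In effect you re-prove the special (diagonal) case of Proposition~\ref{prop:nocondind} from first principles. The paper's argument is shorter given the external machinery and its covariance step works for any associated pair, not just symmetric ones; yours is more self-contained and elementary, at the cost of relying essentially on the reflection symmetry of elliptical laws (so it would not transfer to a merely associated, asymmetric setting). Two minor points worth polishing: the conditional law $E(0,\Lambda_{11})$ of $\bar X_1$ given $\bar X_2=x_2$ generally has a generator depending on $x_2$ (cf.\ Lemma~\ref{lem:condcov}) — your argument only uses its symmetry about $0$, which is unaffected, but you should say so explicitly; and the monotonicity statements for conditional expectations hold only up to null sets, so the "nondecreasing and nonincreasing, hence constant" conclusion should be read as an almost-sure statement about versions, exactly as in the paper's own use of such arguments.
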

\begin{proof}
	Let $X \sim E(\mu,K^{-1})$ be CI and suppose $K_{ij}=0$. By Proposition~\ref{prop:condcorr}, the conditional covariance $\cov(X_i,X_j|X_{(ij)})$ is zero. Since $X$ is CI, by Proposition~\ref{prop:mtpbasic}, the conditional distribution of $(X_i,X_j)$ given $X_{(ij)}$ is also CI. It is well known that CI distributions are also associated; c.f. \cite{colangelo2005some}.  By Corollary 3 in  \cite{newman1984asymptotic}  applied to this conditional distribution we get that $\cov(X_i,X_j|X_{(ij)})=0$ implies $X_i\indep X_j|X_{(ij)}$. From Proposition~\ref{prop:nocondind} we know that the latter is only possible if $X$ is Gaussian. Consider now $Y \sim \TE(\mu,K^{-1})$. From Proposition~\ref{prop:mtpcia_monotonetransforms}, $X=f(Y)$ is CI and, since $X$ is elliptical, by the first  part of the proof, $X$ must be Gaussian.
\end{proof}

Zeros in the inverse covariance matrix are not the only obstacle for the CI property.

\begin{prop}\label{prop:notCI}
	If $X$ has a multivariate t-distribution then $X$ is not CI.
\end{prop}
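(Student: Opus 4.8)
The plan is to exploit the characterization of Gaussianity within the elliptical family, specifically Lemma~\ref{lem:condcov} (conditional covariances are constant in the conditioning variable only in the Gaussian case). If $X$ has a multivariate t-distribution with $k$ degrees of freedom it is a scale mixture of normals with $\tau\sim\chi^2_k/k$ non-constant, so $X$ is \emph{not} Gaussian, and hence by Lemma~\ref{lem:condcov} there is some split $X=(X_I,X_J)$ for which the conditional covariance of $X_I$ given $X_J$ genuinely depends on $X_J$. I would reduce to a two-dimensional sub-vector: take $i\neq j$ and a conditioning set so that $\cov(X_i,X_j\mid X_J)$ (or a conditional variance, e.g. $\var(X_i\mid X_J)$) is non-constant in $X_J$. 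By Proposition~\ref{prop:margcond} the relevant marginal/conditional distributions are again (multivariate) t, so it suffices to work with a bivariate or low-dimensional t.

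The key step is then to show that a genuinely non-constant conditional second moment is incompatible with the CI property. If $X$ were CI, then by Proposition~\ref{prop:mtpbasic} every conditional distribution would be CI, and in particular (reducing as above) the relevant low-dimensional conditional slice of $X$ would be CI. Here I would argue as in the proof of Theorem~\ref{th:notCI}: CI implies association (\cite{colangelo2005some}), and for an elliptical distribution a vanishing conditional covariance together with association forces conditional independence (\cite{newman1984asymptotic}), which by Proposition~\ref{prop:nocondind} forces Gaussianity — a contradiction. The cleanest route, however, is probably to directly contradict the monotonicity in the CI definition: if $\E(h(X_i)\mid X_C)$ must be increasing in $X_C$ for all increasing $h$, then in particular (taking $h$ to range over suitable bounded increasing functions and passing to limits, or using $h(x)=x$ together with $h(x)=x^2$ on a half-line) the conditional distribution of $X_i$ given $X_C$ must be stochastically increasing in $X_C$; for the elliptical/t distribution the conditional law is $E(\mu_{i|C},\Sigma_{ii\cdot C})$ with a \emph{location} that moves with $X_C$ but a \emph{scale} $\Sigma_{ii\cdot C}$ that is constant in $X_C$ — wait, that is constant, so I must instead condition on a set that makes the scale move, i.e. use Lemma~\ref{lem:condcov} to pick $I,J$ with $\dim I\ge 1$, $\dim J\ge 1$ such that the conditional scale of some coordinate depends on the conditioning value, and then note that near the "center" of $X_J$ the conditional spread is smaller than far from it, breaking the stochastic monotonicity required by CI.

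I expect the main obstacle to be the bookkeeping in the last step: making precise that "non-constant conditional covariance in the elliptical family" translates into a violation of stochastic monotonicity (and hence of CI) for the t-distribution, since the conditional \emph{mean} does depend monotonically on the conditioning variables, so the violation has to come entirely from the conditional \emph{dispersion}. The simplest rigorous implementation is likely the association-plus-Proposition~\ref{prop:nocondind} argument already used for Theorem~\ref{th:notCI}: conditioning the t-vector on all but two coordinates gives a bivariate t (by Proposition~\ref{prop:margcond}), whose two coordinates are dependent (e.g. $\cov(X_i^2,X_j^2)>0$ from (\ref{eq:corrsq}) when $\tau$ is non-constant, even if $\rho_{ij}=0$), so they cannot be conditionally independent; but a CI bivariate distribution with zero covariance would be associated and hence, by \cite{newman1984asymptotic}, independent — contradiction — and in the case $\rho_{ij}\neq0$ one instead appeals directly to Lemma~\ref{lem:condcov} after conditioning to see the conditional covariance is non-constant, contradicting what CI (via association and the structure of the conditional elliptical family) would require. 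Either way the conclusion is that no multivariate t-distribution is CI.
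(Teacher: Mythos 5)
Your reduction to the bivariate case and your treatment of $\rho=0$ are fine and match the paper: CI and the t-family are closed under margins (Proposition~\ref{prop:mtpbasic}, Proposition~\ref{prop:margcond}), and when the off-diagonal entry vanishes the association argument of Theorem~\ref{th:notCI} (CI $\Rightarrow$ associated, plus \cite{newman1984asymptotic} and Proposition~\ref{prop:nocondind}) forces Gaussianity. The genuine gap is the case $\rho\neq 0$, which you never actually close. Your proposed shortcut --- ``appeal directly to Lemma~\ref{lem:condcov} \ldots\ contradicting what CI (via association and the structure of the conditional elliptical family) would require'' --- is not a valid deduction: no result in the paper (or in general) says that CI implies a \emph{constant} conditional covariance, and Newman's corollary only converts \emph{zero} covariance plus association into independence, so it gives nothing once the conditional covariance is nonzero. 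Lemma~\ref{lem:condcov} tells you the conditional dispersion of the t moves with the conditioning value, but by itself that is not a contradiction with CI.

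Your second sketch --- test CI with indicator functions $h=\indic_{[c,\infty)}$ and argue that the non-constant conditional scale $\sigma^*(x_2)\propto\sqrt{k+x_2^2}$ breaks the required stochastic monotonicity even though the conditional location $\rho x_2$ is increasing --- is exactly the right idea and is what the paper executes, but as written it is only an expectation that the ``bookkeeping'' will work out, not a proof. The paper makes it rigorous by taking the specific threshold $c=k$, writing $\E(h(X_1)\mid X_2=x_2)=\tfrac12 I_{\alpha(x_2)}\bigl(\tfrac{k+1}{2},\tfrac12\bigr)$ with $\alpha(x_2)=\bigl(1+\tfrac{(k-\rho x_2)^2}{(k+x_2^2)(1-\rho^2)}\bigr)^{-1}$, and checking that $\alpha'(-\rho)=0$ with $\alpha''(-\rho)>0$, so $\alpha$ (hence the conditional expectation) is strictly decreasing on an interval to the left of $x_2=-\rho$. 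Some explicit computation of this kind (or a limiting argument in the threshold $c$ showing that monotonicity of $(c-\rho x_2)/\sigma^*(x_2)$ for all $c$ forces $\sigma^{*\prime}\equiv 0$) is indispensable; without it the key case of the proposition remains unproved.
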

\begin{proof}
Since {both} the CI property {and $t$-distribution are} closed under taking margins, it is enough to show that no bivariate t-distribution is conditionally increasing. Suppose $(X_1,X_2)$ has bivariate t-distribution with $k$ degrees of freedom. Without loss of generality assume that the mean is zero and that the scale matrix $\Sigma$ satisfies $\Sigma_{11}=\Sigma_{22}=1$, $\Sigma_{12}=\rho$. By Remark~\ref{rem:positivePC}, necessarily $\rho\geq 0$. Moreover, if $\rho=0$ the statement follows from Theorem~\ref{th:notCI} so assume $\rho>0$. The conditional distribution of $X_1$ given $X_2=x_2$ is a t-Student distribution with $k^*=k+1$ degrees of freedom, $\mu^*=\rho x_2$, and scale parameter $$\sigma^*=\sqrt{\frac{1-\rho^2}{k+1}(k+x_2^2)}$$
	(c.f. Section~5 in \cite{roth2012multivariate}). {To show that $(X_1,X_2)$ is not CI we provide an increasing function $h$ for which  $\E(h(X_1)|X_2=x_2)$ is not increasing in $x_2$.} Let $h(x_1)=\indic_{[k,+\infty)}(x_1)$, {so that} $\E(h(X_1)|X_2=x_2)=1-F_{X_1|X_2}(k|x_2)$, where $F_{X_1|X_2}$ is the 
{conditional cumulative distribution function (c.d.f.).}
%c.d.f. of the conditional distribution. 
Using the formula \cite[(28.4a)]{johnson1994continuous} for the c.d.f. of the t-Student distribution, if $\mu^*\leq k$ (or equiv. $x_2\leq k/\rho$), we express $\E(h(X_1)|X_2=x_2)$ in terms of the incomplete beta function  
	$$
	\E(h(X_1)|X_2=x_2)\;=\;\frac{1}{2}\cdot I_{\alpha(x_2)}\!\left(\frac{k+1}{2},\frac{1}{2}\right)\qquad\mbox{for }\quad  x_2\leq \frac{k}{\rho},
	$$ 
	where
	$$
	\alpha(x_2)\;=\;\frac{k+1}{\left(\tfrac{k-\mu^*}{\sigma^*}\right)^2+k+1}\;=\;\frac{1}{1+\tfrac{(k-\rho x_2)^2}{(k+x_2^2)(1-\rho^2)}}\;\in\;(0,1). 
	$$
	Using the definition of the incomplete beta function in terms of the beta function we get that for a positive constant $C$ and $x_2\leq k/\rho$
	$$
	\E(h(X_1)|X_2=x_2)=C \int_{0}^{\alpha(x_2)} t^{(k-1)/2}(1-t)^{-1/2}{\rm d}t.
	$$
	Since the integral above is strictly increasing in $\alpha(x_2)$, to show that $\E(h(X_1)|X_2=x_2)$ is not increasing, it is enough to show that $\alpha(x_2)$ is not an increasing function for $x_2\leq k/\rho$. But direct calculations show 	$$
	\alpha'(-\rho)=0,\qquad \alpha''(-\rho)=\frac{2k(1-\rho^2)}{(k+1)^2(k+\rho^2)}>0
	$$
	showing that $\alpha$ is strictly decreasing for all $x_2\leq -\rho$ in some neighborhood of $-\rho$.
	\end{proof}

	Proposition~3.3, \cite{ruschendorf2017var} states that for an elliptical distribution $\Sigma\in \im$ if and only if $X$ is CI. Unfortunately, this result is not true as illustrated both by Theorem~\ref{th:notCI} and Proposition~\ref{prop:notCI}. 

%\subsection{Positive elliptical distributions}

Our results show that the CI/$\mtp$ properties are too restrictive in connection with PGs. As a natural alternative, we study the following relaxation proposed by \cite{agrawal2019covariance}.
\begin{defn}
An elliptically  distributed $X\sim E(\mu,\Sigma)$ is positive if $\rho_{ij\cdot V\setminus \{i,j\}}\geq 0$ for all $i,j\in V$ (equiv. $\Sigma\in\im$).	 A transelliptically distributed $Y\sim\TE(\mu,\Sigma)$ is positive if the distribution of $f(Y)\sim E(\mu,\Sigma)$ is positive elliptical. 
%We write $E_+$ and $\TE_+$ to denote these families of distributions.
\end{defn}

{
From Remark \ref{rem:positivePC} this notion of positivity is weaker than both $\mtp$ and CI. Similarly, positivity does not imply association: for example if $K$ is an M-matrix with a zero entry, then it is positive but not associated in general unless Gaussian. Thus, positive elliptical distributions satisfy a fairly weak notion of positive dependence, that is also simple and useful in applied modelling.
}

We first collect basic properties of this family of distributions.
\begin{prop}\label{prop:basicprop}
If $X$ has {a} positive (trans)elliptical distribution then the same is true for each marginal and each conditional distribution.  Positive transelliptical distributions are also closed under strictly increasing transformations. 
\end{prop}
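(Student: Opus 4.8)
The plan is to reduce everything to elementary facts about M‑matrices, together with the behaviour of the monotone maps defining the transelliptical family. Write $K=\Sigma^{-1}$; by definition, ``$X\sim E(\mu,\Sigma)$ positive'' means exactly that $K$ is an M‑matrix, i.e.\ symmetric positive definite with $K_{ab}\le 0$ for $a\ne b$. First I would dispatch the \emph{conditional} elliptical case, which is immediate: by Proposition~\ref{prop:margcond}(ii), $X_I\mid X_J=x_J\sim E(\mu_{I|J},\Sigma_{II\cdot J})$ with $\Sigma_{II\cdot J}=K_{II}^{-1}$, and $K_{II}$ is a principal submatrix of $K$, hence positive definite with nonpositive off‑diagonal entries, i.e.\ an M‑matrix; thus $\Sigma_{II\cdot J}\in\im$ and the conditional is positive elliptical.

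Next, for the \emph{marginal} elliptical case, Proposition~\ref{prop:margcond}(i) gives $X_I\sim E(\mu_I,\Sigma_{II})$, so it suffices to show $\Sigma_{II}\in\im$ --- equivalently, that the class of inverses of M‑matrices is closed under extracting principal submatrices. This is classical, and I would supply the short argument: applying the block‑inversion formula to $K$ yields
\[
\Sigma_{II}^{-1}\;=\;K_{II}-K_{IJ}K_{JJ}^{-1}K_{JI}.
\]
Here $K_{JJ}$ is an M‑matrix (as above), so $K_{JJ}^{-1}$ is entrywise nonnegative, while $K_{IJ}$ and $K_{JI}$ have all entries nonpositive (they sit off the diagonal of $K$, since $I\cap J=\varnothing$); hence $K_{IJ}K_{JJ}^{-1}K_{JI}$ is entrywise nonnegative. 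Therefore, for $a\ne b$ in $I$, $(\Sigma_{II}^{-1})_{ab}=(K_{II})_{ab}-(K_{IJ}K_{JJ}^{-1}K_{JI})_{ab}\le 0$, and since $\Sigma_{II}^{-1}$ is positive definite it is an M‑matrix, i.e.\ $\Sigma_{II}\in\im$.

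Then I would deduce the \emph{transelliptical} statements from the elliptical ones. Let $Y\sim\TE(\mu,\Sigma)$ be positive and set $X=f(Y)\sim E(\mu,\Sigma)$, a positive elliptical vector, each $f_i$ strictly increasing (hence bijective). For a marginal, $(f_i(Y_i))_{i\in I}=X_I\sim E(\mu_I,\Sigma_{II})$ is positive elliptical by the above, so $Y_I\sim\TE(\mu_I,\Sigma_{II})$ is positive. For a conditional, bijectivity of the $f_j$ gives $\{Y_J=y_J\}=\{X_J=f_J(y_J)\}$ and, as in the proof of Theorem~\ref{th:somef}, $X_J$ and $Y_J$ generate the same $\sigma$‑field; applying $(f_i^{-1})_{i\in I}$ coordinatewise to $X_I\mid X_J=f_J(y_J)\sim E(\mu_{I|J},\Sigma_{II\cdot J})$ shows $Y_I\mid Y_J=y_J\sim\TE(\mu_{I|J},\Sigma_{II\cdot J})$, positive by the conditional case. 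Finally, for strictly increasing $g_1,\dots,g_d$, put $h_i=f_i\circ g_i^{-1}$, which is strictly increasing; then $h_i(g_i(Y_i))=f_i(Y_i)=X_i$, so $h(g(Y))=X\sim E(\mu,\Sigma)$ is positive elliptical, i.e.\ $g(Y)\sim\TE(\mu,\Sigma)$ is positive --- the scale matrix $\Sigma$, and hence the condition $\Sigma\in\im$, being unchanged.

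The only step that is not pure bookkeeping is the marginal elliptical case, i.e.\ the stability of $\im$ under principal submatrices; the Schur‑complement sign chase above is the crux, and I expect it to be the main (minor) obstacle, everything else following from Proposition~\ref{prop:margcond} and the bijectivity of the $f_i$.
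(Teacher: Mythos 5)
Your proof is correct and follows essentially the same route as the paper's: reduce everything via Proposition~\ref{prop:margcond} to the closure of $\im$ under principal submatrices and Schur complements, then transfer to the transelliptical case through the bijectivity of the $f_i$. The only difference is cosmetic but welcome: where the paper cites Corollaries 2.3.1--2.3.2 of Johnson's inverse M-matrix results, you supply the short self-contained arguments (the block-inversion sign chase for $\Sigma_{II}^{-1}$, and the observation that $\Sigma_{II\cdot J}=K_{II}^{-1}$ with $K_{II}$ trivially an M-matrix), which makes the proof self-contained at no real cost.
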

\begin{proof}
	If $X\sim E(\mu,\Sigma)$ then, by Proposition~\ref{prop:margcond}, for every $I\subset V$, $X_I\sim E(\mu_I,\Sigma_{II})$. 	If $\Sigma\in \im$ then  $\Sigma_{II}\in \im$ by \cite{johnson2011inverse}, Corollary 2.3.2. Similarly, $\Sigma\in \im$ then  $\Sigma_{II}-\Sigma_{IJ}\Sigma^{-1}_{JJ}\Sigma_{JI}\in \im$ by \cite{johnson2011inverse}, Corollary 2.3.1, proving that the conditional distribution of $X_I$ given $X_J$ is a positive elliptical distribution. The same argument after replacing $X$ with $f(Y)$ works for transelliptical distributions. The last statement follows directly from the definition of transelliptical distributions.
\end{proof}	
	
	The next proposition shows that positive elliptical distributions retain some strong properties of $\mtp$ Gaussian distributions. 

\begin{prop}\label{prop:tpell}
If $X$ has a positive elliptical distribution then for all $i\in V$ and $C\subseteq V\setminus \{i\}$ the conditional mean $\E(X_i|X_C)$ is an increasing function of $X_C$. Moreover, for any two $i,j\in V$ and $C\subseteq V\setminus \{i,j\}$ it holds that 
$${\rm corr}(X_i,X_j|X_C)\geq 0$$
and
$$
{\rm corr}(X_i,X_j|X_C)=0\quad\Longrightarrow\qquad {\rm corr}(X_i,X_j|X_D)=0\quad \mbox{ for all } D\supseteq C.
$$

\end{prop}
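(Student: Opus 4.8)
The plan is to reduce each of the three claims to a statement about a principal submatrix of $\Sigma$, exploiting that positivity is inherited by margins and conditionals (Proposition~\ref{prop:basicprop}) together with Remark~\ref{rem:positivePC}. For the conditional mean I would argue as follows: by Proposition~\ref{prop:margcond}(i) the subvector $X_{\{i\}\cup C}$ is elliptical with covariance $\Sigma_{(\{i\}\cup C)(\{i\}\cup C)}$, and by Proposition~\ref{prop:basicprop} this marginal is positive, so its inverse $K'$ is an M-matrix; then (\ref{eq:meaninK}) applied to $X_{\{i\}\cup C}$ gives $\E(X_i\mid X_C)=\mu_i-\tfrac{1}{K'_{ii}}K'_{i,C}(X_C-\mu_C)$, and since $K'_{ii}>0$ and $K'_{i,C}\leq 0$ entrywise the coefficient vector is entrywise nonnegative, i.e.\ $\E(X_i\mid X_C)$ is increasing in $X_C$ (equivalently, this is Remark~\ref{rem:positivePC} read for the subvector). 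For nonnegativity of ${\rm corr}(X_i,X_j\mid X_C)$ I would apply the same reduction to $X_{\{i,j\}\cup C}$, which is positive elliptical, and recall (from Proposition~\ref{prop:margcond}(ii) and the discussion following it, together with (\ref{eq:partialcor})) that ${\rm corr}(X_i,X_j\mid X_C)=-M'_{ij}/\sqrt{M'_{ii}M'_{jj}}$ with $M':=\bigl(\Sigma_{(\{i,j\}\cup C)(\{i,j\}\cup C)}\bigr)^{-1}$; as $M'$ is an M-matrix, $M'_{ij}\leq 0$, so the conditional correlation is $\geq 0$.

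For the last implication I would enlarge the conditioning set from $C$ to $D$ one element at a time, in an arbitrary order. Fix an intermediate set $C'$ with $C\subseteq C'\subseteq D\setminus\{k\}$ for some $k\in D\setminus C$, and write $r_{ab}:={\rm corr}(X_a,X_b\mid X_{C'})$. Since these conditional correlations are normalized entries of inverses of principal submatrices of $\Sigma$ (Proposition~\ref{prop:margcond}), they obey the usual partial-correlation recursion
\[
{\rm corr}(X_i,X_j\mid X_{C'\cup\{k\}})=\frac{r_{ij}-r_{ik}r_{jk}}{\sqrt{(1-r_{ik}^2)(1-r_{jk}^2)}},
\]
whose denominator is strictly positive because $\Sigma$ is positive definite. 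I would then induct on $|C'\setminus C|$: in the base case $C'=C$ one has $r_{ij}={\rm corr}(X_i,X_j\mid X_C)=0$ by hypothesis, and whenever $r_{ij}=0$ the numerator above equals $-r_{ik}r_{jk}\leq 0$ by the nonnegativity established in the previous paragraph, while that same nonnegativity forces ${\rm corr}(X_i,X_j\mid X_{C'\cup\{k\}})\geq 0$; hence ${\rm corr}(X_i,X_j\mid X_{C'\cup\{k\}})=0$. After exhausting $D\setminus C$ this yields ${\rm corr}(X_i,X_j\mid X_D)=0$.

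The first two assertions are essentially immediate once Proposition~\ref{prop:basicprop} and Remark~\ref{rem:positivePC} are in hand, so the only genuine content lies in the last part. There the point I expect to need the most care is justifying the partial-correlation recursion in the non-Gaussian elliptical setting: it is a purely algebraic identity relating normalized entries of inverses of principal submatrices of $\Sigma$, and Proposition~\ref{prop:margcond} is exactly what identifies those algebraic quantities with the conditional correlations ${\rm corr}(X_a,X_b\mid X_{C'})$, so the identity carries over verbatim; after that, the induction is routine bookkeeping.
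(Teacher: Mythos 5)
Your proof is correct, and for the first two claims it follows essentially the same route as the paper: both reduce to the fact that inverses of M-matrices are closed under taking principal submatrices (via Proposition~\ref{prop:basicprop}/\cite{johnson2011inverse}), so that $\bigl(\Sigma_{(\{i,j\}\cup C)(\{i,j\}\cup C)}\bigr)^{-1}_{ij}\leq 0$ and the regression coefficients in (\ref{eq:meaninK}) are nonnegative. Where you genuinely diverge is the last implication. The paper translates it into a determinantal statement about almost-principal minors of an inverse M-matrix --- if $\det\Sigma_{C\cup\{i\},C\cup\{j\}}=0$ then $\det\Sigma_{D\cup\{i\},D\cup\{j\}}=0$ for all $D\supseteq C$ --- and cites this as Theorem~3.3 of \cite{johnson2011inverse}. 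You instead give a self-contained induction using the one-variable partial-correlation recursion: if $r_{ij}=0$ then the updated correlation equals $-r_{ik}r_{jk}$ divided by a positive quantity, which is $\leq 0$ by the sign constraints from the first part, yet also $\geq 0$ by the same nonnegativity applied to the enlarged conditioning set, hence zero. This squeeze argument is valid (Proposition~\ref{prop:margcond} identifies the conditional correlations with the algebraic partial correlations of $\Sigma$, and positive definiteness of $\Sigma$ keeps the denominator strictly positive), and it buys you an elementary, citation-free proof that makes the mechanism of zero-propagation transparent; the paper's version is shorter but leans on a nontrivial external result about inverse M-matrices.
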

\begin{proof}
These results are well known for Gaussian $\mtp$ distributions; c.f. \cite{fallat:etal:17}. It is convenient to translate them to equivalent statements in terms of $\Sigma$; c.f. \cite[Proposition 3.1.13]{drton2008lectures}. The statement about the conditional mean and the first statement about conditional correlations follow from the fact that $\im$-matrices are closed under taking principal submatrices; c.f. \cite{johnson2011inverse}, Corollary 2.3.2. In consequence, for all $i,j\in V$ and $C\subseteq V\setminus \{i,j\}$ it holds that $(\Sigma_{C\cup\{i,j\},C\cup\{i,j\}})^{-1}_{ij}\leq 0$. The last part states that if $\det\Sigma_{C\cup\{i\},C\cup\{j\}}=0$ for some $C\subseteq V\setminus \{i,j\}$ then $\det\Sigma_{D\cup\{i\},D\cup\{j\}}=0$ for every $D\supseteq C$. This statement is given in  \cite[Theorem~3.3]{johnson2011inverse}.
\end{proof}

These properties are pivotal in the interpretation and application of the classical positive dependence measures. Briefly, the first part says that for positive elliptical distributions conditional correlations are {non-negative}, regardless of what subset of variables one conditions upon. The second part says that if a covariance conditional on $X_C$ is 0, then it remains 0 when conditioning upon larger sets. In particular, zero marginal correlation implies zero partial correlation, hence Simpson's paradox cannot occur.

The following result offers an extension of Theorem~\ref{th:anyf} to the positive case.

\begin{prop}\label{prop:ellip_positive_partialcorr}	If $X \sim E(\mu,\Sigma)$ where $\Sigma \in \im$
then ${\rm corr}(g(X_i),X_j|X_C)\geq 0$ for every $i,j\in V$, any increasing function $g:\R\to \R$, and any conditioning set $C\subseteq V\setminus \{i,j\}$. 
\end{prop}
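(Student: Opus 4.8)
The plan is to reduce the claim to a one-dimensional covariance inequality. First I would fix an arbitrary value $x_C$ of the conditioning vector. By Proposition~\ref{prop:margcond}(ii) the conditional distribution of $(X_i,X_j)$ given $X_C=x_C$ is the bivariate elliptical $E(\tilde{\mu},\tilde{\Sigma})$, where $\tilde{\Sigma}=\Sigma_{\{i,j\},\{i,j\}\cdot C}$ does not depend on $x_C$. Since $\Sigma\in\im$, Proposition~\ref{prop:basicprop} (closedness of $\im$ under conditioning; equivalently Proposition~\ref{prop:tpell}) gives $\tilde{\Sigma}\in\im$, and for a $2\times 2$ positive-definite matrix membership in $\im$ is simply the condition $\tilde{\Sigma}_{ij}\ge 0$. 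Thus it suffices to show that for a bivariate elliptical vector with scale matrix $\tilde{\Sigma}$ satisfying $\tilde{\Sigma}_{ij}\ge 0$, and for any increasing $g$ with $\E(g(X_i)^2\mid X_C=x_C)<\infty$, the conditional covariance $\cov(g(X_i),X_j\mid X_C=x_C)$ is nonnegative; the correlation then has the same sign whenever it is defined.

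Second, within this bivariate conditional model the conditional mean is affine: by Proposition~\ref{prop:margcond}(ii) applied inside it, $\E(X_j\mid X_i,X_C=x_C)=\tilde{\mu}_j+\tfrac{\tilde{\Sigma}_{ij}}{\tilde{\Sigma}_{ii}}(X_i-\tilde{\mu}_i)$. Hence, by the tower property (conditioning additionally on $X_i$, which kills the within-$X_i$ covariance term),
$$
\cov(g(X_i),X_j\mid X_C=x_C)=\cov\!\big(g(X_i),\,\E(X_j\mid X_i,X_C=x_C)\;\big|\;X_C=x_C\big)=\frac{\tilde{\Sigma}_{ij}}{\tilde{\Sigma}_{ii}}\,\cov(g(X_i),X_i\mid X_C=x_C).
$$
Since $\tilde{\Sigma}_{ii}>0$ and $\tilde{\Sigma}_{ij}\ge 0$, it remains only to check that $\cov(g(X_i),X_i\mid X_C=x_C)\ge 0$.

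Third, this last inequality is the classical fact that two increasing functions of one random variable are nonnegatively correlated: if $X_i'$ denotes an independent copy of $X_i$ from the conditional law given $X_C=x_C$, then $2\cov(g(X_i),X_i\mid X_C=x_C)=\E\big[(g(X_i)-g(X_i'))(X_i-X_i')\big]\ge 0$, because $g$ increasing forces the two factors to have the same sign. Combining the three steps yields $\cov(g(X_i),X_j\mid X_C=x_C)\ge 0$ for every $x_C$, and dividing by the (positive) conditional standard deviations of $g(X_i)$ and $X_j$ given $X_C$ gives ${\rm corr}(g(X_i),X_j\mid X_C)\ge 0$.

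I do not expect a genuine obstacle here; the only points needing care are (a) invoking Proposition~\ref{prop:basicprop}/\ref{prop:tpell} so as to keep the M-matrix property after conditioning and then reading off the sign of the single off-diagonal entry of the $2\times 2$ block, and (b) the implicit finiteness hypothesis --- exactly as in Lemma~\ref{lem:anyf} and Theorem~\ref{th:anyf}, the statement is to be read for every increasing $g$ for which the covariance exists. Note that none of the density-generator analysis or impossibility results of Section~\ref{sec:TP} is needed, only the elementary monotone-correlation inequality together with closedness of $\im$ under conditioning and taking principal submatrices.
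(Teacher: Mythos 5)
Your proof is correct and follows essentially the same route as the paper's: both reduce, via the tower property, to the covariance of $g(X_i)$ with the conditional mean of $X_j$ given $(X_i,X_C)$, identify that conditional mean as increasing in $X_i$ using the closure of inverse M-matrices under marginalization/conditioning, and conclude with the classical nonnegativity of the covariance of two increasing functions of a single random variable. The only cosmetic difference is that you condition on $X_C$ first and read off the single regression slope $\tilde{\Sigma}_{ij}/\tilde{\Sigma}_{ii}\ge 0$ from the $2\times 2$ Schur complement, whereas the paper works with the full coefficient vector $\Sigma_{j,A}\Sigma_A^{-1}$ for $A=C\cup\{i\}$.
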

\begin{proof}
	Let $A=C\cup \{i\}$. By the law of total expectation
	\begin{align}
	\cov(g(X_i),X_j|X_C)\;=\;
{\E_{X_i \mid X_C} \left[ \E \Big((g(X_i)-\E g(X_i)) (X_j-\mu_j) \mid X_i,X_C \Big) \right]}
\nonumber \\
={\E_{X_i \mid X_C} \left[ (g(X_i)-\E g(X_i)) \E \Big( X_j-\mu_j \mid X_A \Big) \right]}
\nonumber \\
={\E_{X_i \mid X_C} \left[ (g(X_i)-\E g(X_i)) \tilde{h}(X_A) \right].}
%\nonumber \\
%=\E_{X_i}\Big((g(X_i)-\E g(X_i))\E(X_j-\mu_j|X_i,X_C)\Big|X_C\Big).
\nonumber
\end{align}
	where we denote $\tilde h(X_A)=\E(X_j-\mu_j|X_A)=\Sigma_{j,A}\Sigma_{A}^{-1}(X_{A}-\mu_A)$. Denote $\tilde K=(\Sigma_{A\cup \{j\}})^{-1}$, which is an M-matrix by \cite{johnson2011inverse}, Corollary 2.3.2. {Using that $\tilde{K}$ is an M-matrix, expression } (\ref{eq:meaninK}) gives that {all entries in the vector $\Sigma_{j,A} \Sigma_A^{-1}$ are non-negative, hence} $\tilde h(X_A)$ is a non-decreasing function of $X_A$. {Further, note that for any fixed} $X_C$ we can write $\tilde h(X_A)$ as a function of $X_i$ only. Denote this function by {$h_{X_C}(X_i)$}. We thus have
	$$
	\cov(g(X_i),X_j|X_C)\;=\;\cov(g(X_i),{h_{X_C}(X_i)}|X_C),
	$$
	where now both $g$ and $h$ are nondecreasing functions of $X_i$. By Property~3 in \cite{esary1967association} it follows that $\cov(g(X_i),{h_{X_C}}(X_i)|X_C)\geq 0$. 

\end{proof}

\begin{rem}
Consider a multivariate function $g(X_{(j)}): \R^{d-1} \rightarrow \R$. Given $X_{(ij)}$ then $g(X_{(j)})$ is an increasing function $\tilde{g}$ of only $X_i$, hence we may write 
$$\cov(g(X_{(j)}),X_j \mid X_{(ij)})=
\cov(\tilde{g}(X_i),X_j \mid X_{(ij)}) \geq 0,
$$
the right-hand side following from Proposition \ref{prop:ellip_positive_partialcorr}.	
\end{rem}

Many constraint-based structure learning algorithms, like the PC algorithm \citep{spirtes2000causation}, rely on the assumption that the dependence structure in the data-generating distribution reflects faithfully the graph. We say that the distribution of $X$ is faithful to a graph $G$ if we have that $X_i\indep X_j|X_C$ if and only if the subset of vertices $C$ separates vertices $i$ and $j$ in $G$, for any $C \subseteq V \setminus \{i,j\}$. In words, any independence obtained by conditioning on subsets $C$ is reflected in the graph. Under faithfulness one can consistently learn the underlying graph from data by conditioning on potentially smaller subsets than the full set of vertices, and benefit from simpler computation. %by learning the underlying conditional independence statements. 
One may extend this definition to partial correlation graphs \citep{spirtes2000causation}: the distribution of $X$ is {\it linearly faithful} to an undirected graph $G$ if we have that ${\rm corr}(X_i,X_j|X_C)=0$ if and only if $C$ separates $i$ and $j$ in $G$.  \cite{buhlmann2010variable} proposed a related convenient notion {unrelated to any particular graph}: the distribution of $X$ is {\it partially faithful} if we have that ${\rm corr}(X_i,X_j|X_C)=0$ for any $C\subset V\setminus \{i,j\}$ implies that ${\rm corr}(X_i,X_j|X_{(ij)})=0$. Using partial faithfulness \cite{buhlmann2010variable} developed a simplified version of the PC algorithm that is computationally feasible even with thousands of variables and was reported to be competitive to standard penalty-based approaches. An important property of positive elliptical distributions is given by the following result.
\begin{prop}\label{th:faitful}
	Every positive elliptical distribution is  partially faithful. 
\end{prop}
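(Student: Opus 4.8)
The plan is to derive this as an essentially immediate consequence of the second part of Proposition~\ref{prop:tpell}. Recall that a distribution is partially faithful (in the sense defined just above) precisely when, for every pair $i,j\in V$ and every $C\subset V\setminus\{i,j\}$, the vanishing ${\rm corr}(X_i,X_j\mid X_C)=0$ forces ${\rm corr}(X_i,X_j\mid X_{(ij)})=0$, where $X_{(ij)}$ denotes conditioning on the \emph{full} set $V\setminus\{i,j\}$. But Proposition~\ref{prop:tpell} already tells us that, for a positive elliptical distribution, ${\rm corr}(X_i,X_j\mid X_C)=0$ implies ${\rm corr}(X_i,X_j\mid X_D)=0$ for every $D\supseteq C$ inside $V\setminus\{i,j\}$; in other words, the family of conditioning sets that make the conditional correlation vanish is upward closed.

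Concretely, I would let $X\sim E(\mu,\Sigma)$ with $\Sigma\in\im$, fix $i,j\in V$, and suppose ${\rm corr}(X_i,X_j\mid X_C)=0$ for some $C\subset V\setminus\{i,j\}$. Since $C\subseteq (ij):=V\setminus\{i,j\}$, applying the last implication in Proposition~\ref{prop:tpell} with $D=(ij)$ yields ${\rm corr}(X_i,X_j\mid X_{(ij)})=0$, which is exactly the partial faithfulness condition. One small point worth recording in the write-up is that all the conditional correlations here are well defined and equal to the normalized entries of the inverse Schur complements $\Sigma_{II\cdot J}^{-1}$ by Proposition~\ref{prop:margcond}(ii), so that the monotone-persistence statement of Proposition~\ref{prop:tpell} genuinely applies verbatim.

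There is no real obstacle here: the substantive work has already been absorbed into Proposition~\ref{prop:tpell}, whose persistence part in turn rests on the M-matrix fact that a vanishing minor $\det\Sigma_{C\cup\{i\},C\cup\{j\}}=0$ propagates to $\det\Sigma_{D\cup\{i\},D\cup\{j\}}=0$ for all $D\supseteq C$ (\cite[Theorem~3.3]{johnson2011inverse}). Thus the only thing left for this proof is to spell out the translation between "partially faithful" and "zero conditional correlation is upward closed in the conditioning set", and to note that $V\setminus\{i,j\}$ is the top element of the poset of conditioning sets. If anything, the mild care required is purely bookkeeping: one should make clear that the hypothesis "${\rm corr}(X_i,X_j\mid X_C)=0$ for some $C$" (rather than for all $C$) already suffices, precisely because of upward closure.
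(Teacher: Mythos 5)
Your proposal is correct and matches the paper's argument exactly: the paper also derives partial faithfulness as an immediate consequence of the persistence statement in Proposition~\ref{prop:tpell}, applied with $D=V\setminus\{i,j\}$. Your write-up simply makes explicit the bookkeeping that the paper leaves implicit.
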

\begin{proof}
 The proof  follows from Proposition~\ref{prop:tpell}.
\end{proof}

\subsection{Characterisation of $\mtp$ elliptical distributions}\label{sec:abdous}

We finish our discussion of positive dependence for elliptical distributions with a complete characterization of $\mtp$ distributions. {In this section we assume that $X$ admits a density with respect to the Lebesgue measure. In this case the density necessarily takes the form
\begin{equation}\label{eq:density}
	p(x)\;\;=\;\;|\Sigma|^{-1/2}\,\varphi_d\Big((x-\mu)^T\Sigma^{-1}(x-\mu)\Big),
\end{equation}
where $\varphi_d$ is a nonnegative function (that may depend on $d$) called the density generator. }

Proposition 1.2 in \cite{abdous2005dependence} gives a necessary and sufficient condition for bivariate elliptical distributions to be $\mtp$. With a bit of matrix algebra their proof generalizes.

%Recall from Remark~\ref{rem:density} that the density of $X$ is uniquely given by the density generator $\varphi_d$. {For notational simplicity we drop the dependence on $d$ in the result below. }

\begin{thm}\label{th:abdous}
	Suppose $X$ has a $d$-dimensional elliptical distribution with partial correlations $\rho_{ij \cdot V \setminus \{i,j\}} \geq 0$ {and suppose $X$ admits a twice differentiable density function with the density generator $\varphi_d(t)$ for $t\geq 0$}.  Let $\rho_*={\min_{i,j}} \rho_{ij \cdot V \setminus \{i,j\}}$ and let $\phi(t)=\log \varphi_d(t)$. Then $X$ is $\mtp$ if and only if $\phi'(t)\leq 0$ {for} all $t\geq 0$; $\phi'(t)=0$ implies $\phi''(t)=0$ {for all $t>0$}; and 
\begin{equation}\label{eq:abdous}
-\frac{\rho_{*}}{1+\rho_{*}}\;\;\leq\;\;\frac{ t \phi''(t)}{\phi'(t)}\;\;\leq\;\;\frac{\rho_{*}}{1-\rho_{*}}.	
\end{equation}
	for all $t\in \T=\{t:\phi'(t)< 0\}$. In particular, $\inf_{t\in \T} \tfrac{t\phi''(t)}{\phi'(t)}> -\tfrac{1}{2}$.
\end{thm}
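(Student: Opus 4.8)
The plan is to carry out the $d$-dimensional version of the argument behind Proposition~1.2 in \cite{abdous2005dependence}, using the classical log-supermodularity criterion for $\mtp$. Since writing $\phi=\log\varphi_d$ presupposes $\varphi_d>0$ on $[0,\infty)$, the density $p$ in (\ref{eq:density}) is strictly positive on $\R^d$, and a strictly positive, twice continuously differentiable density is $\mtp$ if and only if $\partial^2\log p/\partial x_i\partial x_j\ge 0$ for all $i\ne j$ and all $x\in\R^d$ (see \cite{karlin1980classes}). First I would substitute (\ref{eq:density}), write $y=x-\mu$ and $q=y^TKy$ with $K=\Sigma^{-1}$, and compute
\[
\frac{\partial^2\log p}{\partial x_i\partial x_j}(x)\;=\;4\,\phi''(q)\,(Ky)_i(Ky)_j\;+\;2\,\phi'(q)\,K_{ij},
\]
so that the $\mtp$ condition becomes $2\phi''(q)(Ky)_i(Ky)_j+\phi'(q)K_{ij}\ge 0$ for all $i\ne j$ and all $y\in\R^d$.

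I would then change variables to $u=Ky$ (a bijection of $\R^d$), under which $q=u^T\Sigma u$, so the condition reads: for all $i\ne j$ and all $u\in\R^d$, $2\phi''(u^T\Sigma u)u_iu_j+\phi'(u^T\Sigma u)K_{ij}\ge 0$. Fixing $i\ne j$ and a level $t=u^T\Sigma u\ge 0$, the left-hand side is affine in $s:=u_iu_j$, so everything reduces to the range of $u_iu_j$ over the ellipsoid $\{u:u^T\Sigma u=t\}$. By homogeneity this range equals $t$ times the range over $\{u^T\Sigma u=1\}$, which (for $d\ge 2$, where the ellipsoid is compact and connected) is the closed interval $[m^-_{ij},m^+_{ij}]$ whose endpoints are the extreme eigenvalues of $\Sigma^{-1/2}M\Sigma^{-1/2}$, where $M$ is the symmetric matrix with $u^TMu=u_iu_j$ (only nonzero entries $M_{ij}=M_{ji}=\tfrac12$). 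Exploiting that $\Sigma^{-1/2}M\Sigma^{-1/2}=\tfrac12(ab^T+ba^T)$ with $a=\Sigma^{-1/2}e_i$, $b=\Sigma^{-1/2}e_j$ ($e_i$ the $i$-th standard basis vector), so its nonzero eigenvalues are $\tfrac12(a^Tb\pm\|a\|\,\|b\|)$ with $a^Tb=K_{ij}$, $\|a\|^2=K_{ii}$, $\|b\|^2=K_{jj}$, one obtains $m^\pm_{ij}=\tfrac12\big(K_{ij}\pm\sqrt{K_{ii}K_{jj}}\big)$; positive definiteness of $K$ (i.e.\ $K_{ij}^2<K_{ii}K_{jj}$) gives $m^+_{ij}>0>m^-_{ij}$.

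An affine function is nonnegative on $[tm^-_{ij},tm^+_{ij}]$ iff it is nonnegative at both endpoints. Substituting $s=tm^\pm_{ij}$ and $K_{ij}=-\rho_{ij}\sqrt{K_{ii}K_{jj}}$, with $\rho_{ij}:=\rho_{ij\cdot V\setminus\{i,j\}}\ge 0$, and dividing by $\sqrt{K_{ii}K_{jj}}>0$, the two endpoint inequalities become, for every $i\ne j$ and $t\ge 0$,
\[
t\phi''(t)(1-\rho_{ij})\;\ge\;\rho_{ij}\phi'(t),\qquad\quad t\phi''(t)(1+\rho_{ij})\;\le\;-\rho_{ij}\phi'(t).
\]
From here: if $\phi'(t)>0$ at some $t>0$, the first forces $\phi''(t)>0$ and the second $\phi''(t)<0$, impossible, so $\phi'\le 0$ on $[0,\infty)$ by continuity (the degenerate all-$\rho_{ij}=0$ case instead forces $\phi''\equiv 0$ and is handled directly via integrability of $p$); if $\phi'(t)=0$ at some $t>0$, the two inequalities give $\phi''(t)\ge 0$ and $\phi''(t)\le 0$, so $\phi''(t)=0$; and on $\T=\{\phi'<0\}$, dividing by $\phi'(t)<0$ (which reverses both inequalities) gives $-\tfrac{\rho_{ij}}{1+\rho_{ij}}\le\tfrac{t\phi''(t)}{\phi'(t)}\le\tfrac{\rho_{ij}}{1-\rho_{ij}}$, and since $\rho\mapsto\tfrac{\rho}{1-\rho}$ and $\rho\mapsto\tfrac{\rho}{1+\rho}$ are increasing on $[0,1)$, the binding pair is $\rho_{ij}=\rho_*$, which is (\ref{eq:abdous}). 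The converse runs this chain backwards: the three stated conditions yield the two displayed inequalities for every pair (using $\phi'\le 0$ and ``$\phi'=0\Rightarrow\phi''=0$'' to handle $t=0$ and $t\notin\T$, and the monotonicity above to pass from $\rho_*$ to any $\rho_{ij}$), hence the affine inequality at both endpoints and so on the whole interval, hence $\partial^2\log p/\partial x_i\partial x_j\ge 0$, hence $\mtp$. The last clause is then immediate: positive definiteness of $K$ forces $\rho_*<1$, so $\inf_{t\in\T}\tfrac{t\phi''(t)}{\phi'(t)}\ge-\tfrac{\rho_*}{1+\rho_*}>-\tfrac12$.

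The step I expect to be the main obstacle is the ellipsoid optimization: getting right that $\max/\min$ of $u_iu_j$ over $\{u^T\Sigma u=1\}$ is $\tfrac12\big(K_{ij}\pm\sqrt{K_{ii}K_{jj}}\big)$ via the rank-two eigenvalue computation, and confirming that the full closed interval between these extremes is attained, which is what makes checking the affine inequality at only the two endpoints both necessary and sufficient. The remainder is careful bookkeeping at the degenerate values ($t=0$, and $t$ with $\phi'(t)=0$) and invoking the $\mtp\Leftrightarrow$ log-supermodularity equivalence, which relies on the standing assumption $\varphi_d>0$.
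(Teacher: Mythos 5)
Your proposal is correct and follows essentially the same route as the paper's proof: the log-supermodularity criterion for $\mtp$, the identical second-derivative computation, reduction to an affine inequality in $u_iu_j$ over the ellipsoid $\{u^T\Sigma u=t\}$, and the same endpoint/sign analysis yielding the three conditions and their converse. The only (minor) difference is that you find the extrema of $u_iu_j$ via the eigenvalues of the rank-two matrix $\Sigma^{-1/2}M\Sigma^{-1/2}$, whereas the paper uses Lagrange multipliers; both give the endpoints $\tfrac{t}{2}\bigl(K_{ij}\pm\sqrt{K_{ii}K_{jj}}\bigr)$, and your explicit treatment of the degenerate all-zero-$\rho_{ij}$ case is, if anything, slightly more careful than the paper's.
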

\begin{proof}
Without loss of generality assume $X$ has mean zero and $K=\Sigma^{-1}$ satisfies $K_{11}=\cdots=K_{dd}=1$. In this case $K_{ij}=-\rho_{ij \cdot V \setminus \{i,j\}}$ for all $i\neq j$. If $X$ admits a strictly positive {and twice differentiable} density function $f(x)$ then $X$ is $\mtp$ if and only if for every $1\leq i< j\leq d$
$$
\frac{\partial^2 }{\partial x_i\partial x_j} \log f(x)\;\geq\; 0\qquad \mbox{for all }x\in \R^d.
$$	
This result, found for example in \cite{bach2019submodular} can be proved by elementary means, for example, by applying a second-order mean value theorem (Theorem~9.40 in \cite{rudin1964principles}). In our case $f(x)=|\Sigma|^{-1/2}\varphi_d(x^T K x)$ so $f$ is $\mtp$ if and only if for every $1\leq i< j\leq d$
\begin{equation}\label{eq:mtp2cond}
\frac{\partial^2 }{\partial x_i\partial x_j} \phi(x^T K x)\;\geq\; 0\qquad \mbox{for all }x\in \R^d.	
\end{equation}
Basic calculus gives $\nabla \phi(x^T K x)\;=\;2\,\phi'(x^T K x) K x$ and 
$$
\nabla^2 \phi(x^T K x)\;=\;2\,\phi'(x^T K x)K +4\,\phi''(x^T K x)K x x^T K.
$$
{If $x\neq 0$ we perform a change of coordinates to $t$ and $v=\tfrac{1}{\sqrt{t}}Kx$ constrained by $t>0$ and $v^T\Sigma v=1$. We extend it to the whole $\R^d$ by mapping $x=0$ to $t=0$ and $v=0$. In the new coordinate system, condition (\ref{eq:mtp2cond}) holds if and only  if for all $1\leq i<j\leq d$} 
\begin{equation}\label{ineq:mtp2aux}
2\phi'(t) K_{ij} + 4\phi''(t)\, t \,v_i v_j\;\;\geq \;\; 0\qquad\mbox{for all }t,v.	
\end{equation}

Taking $v$ such that $v_i=0$  {(which includes the $x=0$ case)}, (\ref{ineq:mtp2aux}) implies that $\phi'(t)\leq 0$ for all $t\geq 0$, which gives the first condition. If $\phi'(t)=0$ {and $t>0$}, (\ref{ineq:mtp2aux}) cannot hold for all $v$ unless $\phi''(t)=0$, which gives the second condition in the theorem. Now suppose $t$ is such that $\phi'(t)<0$ then (\ref{ineq:mtp2aux})  becomes
\begin{equation}\label{ineq:mtp2aux2}
2v_iv_j\frac{ t \phi''(t)}{\phi'(t)}\;\;\leq\;\; -K_{ij}.
\end{equation}
To study the bounds on $2v_iv_j$ subject to $v^T\Sigma v=1$ we define the Lagrangian
$$
L(v,\lambda)\;=\;2v_iv_j-\lambda(v^T \Sigma v-1).
$$
Denote $A=\{i,j\}$ and $B=V\setminus A$. The Lagrangian condition {$\nabla_v L(v,\lambda)=0$} can be then reduced to $v_B=-\Sigma^{-1}_{BB}\Sigma_{BA}v_A$ and 
$$
\begin{bmatrix}
	v_j\\
	v_i
\end{bmatrix}\;=\;\lambda(\Sigma_{AA}-\Sigma_{AB}\Sigma_{BB}^{-1}\Sigma_{BA})\begin{bmatrix}
	v_i\\
	v_j
\end{bmatrix}\;=\;\lambda K_{AA}^{-1}\begin{bmatrix}
	v_i\\
	v_j
\end{bmatrix}.
$$
Multiplying both sides by $K_{AA}$ we get
$$
\begin{bmatrix}
	1 & K_{ij} \\
	K_{ij} & 1
\end{bmatrix}\begin{bmatrix}
	v_j\\
	v_i
\end{bmatrix}\;=\;\lambda \begin{bmatrix}
	v_i\\
	v_j
\end{bmatrix}.
$$
All stationary points must then satisfy $v_i^2=v_j^2$. The maximal value of $2v_i v_j$ subject to $v^T\Sigma v=1$ is $2\alpha^2$ obtained at a point where $v_i=v_j=\alpha$. The value of $\alpha$ can be found by noting that 
$$
v^T\Sigma v\;=\;\alpha^2\bs 1^T(\Sigma_{AA}- \Sigma_{AB}\Sigma_{BB}^{-1}\Sigma_{BA})\bs 1 \;=\;\frac{2\alpha^2}{1+K_{ij}},
$$
where $\bs 1$ is the vector of ones. Since $v^T\Sigma v=1$, $2\alpha^2=1-\rho_{ij \cdot V \setminus \{i,j\}}$. In a similar way we show that the minimal value of $2v_iv_j$ is $-(1+\rho_{ij \cdot V \setminus \{i,j\}})$. This gives that (\ref{ineq:mtp2aux2}) is equivalent to
$$
-\frac{\rho_{ij \cdot V \setminus \{i,j\}}}{1+\rho_{ij \cdot V \setminus \{i,j\}}}\;\;\leq\;\;\frac{ t \phi''(t)}{\phi'(t)}\;\;\leq\;\;\frac{\rho_{ij \cdot V \setminus \{i,j\}}}{1-\rho_{ij \cdot V \setminus \{i,j\}}}.
$$
This inequality must be satisfied for every $i\neq j$. However the functions $\rho/(1+\rho)$ and $\rho/(1-\rho)$ are increasing for $\rho\in [0,1)$ and so $\min_{ij}\rho_{ij \cdot V \setminus \{i,j\}}/(1-\rho_{ij \cdot V \setminus \{i,j\}})=\rho_*/(1-\rho_*)$ and $\min_{ij}\rho_{ij \cdot V \setminus \{i,j\}}/(1+\rho_{ij \cdot V \setminus \{i,j\}})=\rho_*/(1+\rho_*)$. Thus we arrive at (\ref{eq:abdous}). 

Now suppose that $\phi$ is such that $\phi'(t)\leq 0$; $\phi'(t)=0$ implies that $\phi''(t)=0$; and    (\ref{eq:abdous}) holds for all $t\in \T$. By reversing the argument above we conclude that (\ref{ineq:mtp2aux}) holds for all $t\in \T$. For all the remaining $t$ this inequality also holds because then both sides are equal to zero. However, as we argued before (\ref{ineq:mtp2aux}) holds for all $t$ if and only if $X$ is $\mtp$. This concludes our proof.
\end{proof}

We illustrate Theorem~\ref{th:abdous} with two examples.
\begin{ex}
By Proposition~\ref{prop:notCI}, if $X$ has {a} t-distribution then $X$ is not conditionally increasing and so, in particular, it is not $\mtp$.	Theorem~\ref{th:abdous} provides an easy way to see this. For the d-dimensional t-distribution with $k$ degrees of freedom $\phi(t)=-\tfrac{k+d}{2}\log(1+\tfrac{t}{k})$. Since $\phi'(t)=-\tfrac{1}{2}\tfrac{k+d}{k+t}<0$, condition~(\ref{eq:abdous}) requires that 
	$$
	-\frac{\rho_*}{1+\rho_*}\leq -\frac{t}{k+t}\leq \frac{\rho_{*}}{1-\rho_{*}}
	$$
	must be satisfied for all $t\in \R$. Taking the limits $t\to\pm\infty$ shows that this is impossible irrespective of $\rho_*\in (-1,1)$. Similarly, in the case of a zero-mean multivariate Laplace distribution the density generator is $\varphi_d(t)=(\tfrac{t}{2})^{k/2} K_k(\sqrt{2t})$, where $k=\tfrac{2-d}{2}$ and $K_k(\cdot)$ is the modified Bessel function of the second kind. Irrespective of $d$, $\tfrac{t\phi''(t)}{\phi'(t)}\in (-1,-\tfrac{1}{2})$ and so {Laplace} distributions are never $\mtp$ {by the last part of Theorem~\ref{th:abdous}}.
\end{ex}

\begin{ex}\label{ex:alpha}
	Consider a d-dimensional $X \sim E(\mu, K^{-1})$ with generator $\varphi_d(t)=e^{-t^\alpha}$, i.e. density $f(x) \propto e^{-(x^T K x)^\alpha}$. This defines a special type of Kotz type distributions;  see \cite{fang2018symmetric} (Section~3.2) and in our case we have
	$$
	\frac{t\phi''(t)}{\phi'(t)}\;=\;\alpha-1.
	$$From Theorem~\ref{th:abdous}, if $\alpha>1$ ($X$ has thinner-than-Normal tails) then $\mtp$ holds if and only if $\rho_* > 1-1/\alpha$, and similarly if $1/2<\alpha<1$ (thicker-than-Normal tails) then $\mtp$ also holds if and only if $\rho_* \geq 1/\alpha-1$. If $\alpha \leq 1/2$ then $X$ cannot be $\mtp$. 
\end{ex}

The constraints on possible $\rho_*$ in Example~\ref{ex:alpha} did not take into account one more important aspect of the problem, namely that $K$ is a $d\times d$ positive definite matrix. To illustrate this, suppose that all off-diagonal entries of $K$ are equal, that is, $\rho_{ij \cdot V \setminus \{i,j\}}=\rho_*=-K_{ij}>0$ for all $i\neq j$. Such $K$ is positive definite if and only if $\rho_*<1/(d-1)$. In Example~\ref{ex:alpha} this gives an upper bound on $\rho_*$ that interplays with the lower bound $\rho_*\geq |1-1/\alpha|$. These two bounds define a non-empty set if and only if $|1-1/\alpha|<1/(d-1)$. If $d=2$ this holds for any $\alpha>1/2$. If $d\geq 3$ this holds if and only if $$1-\frac{1}{d} \;<\; \alpha \;<\; 1+\frac{1}{d-2}.$$
{Note that unless $\alpha=1$ (the Gaussian case), this condition cannot hold for all $d\in \N$.}

It is remarkable that this simple example generalizes and yields the following characterization of elliptical families with a fixed density generator that contain $\mtp$ distributions. 
%Recall from Remark~\ref{rem:density} that a $d$-variate elliptical distribution with density generator $\varphi_d$ admits the density function $f(x)= |\Sigma|^{-1/2} \varphi_d\big((x-\mu)^T\Sigma^{-1}(x-\mu)\big)$ and the density generator in this representation is independent of $d$.

\begin{thm}\label{thm:necessary_mtp2}
Consider the family of all elliptical distributions with density generator $\varphi_d(t)$ and let $\phi(t)=\log\varphi_d(t)$. Then, there exists a scale matrix parameter $\Sigma$ such that the density (\ref{eq:density}) is $\mtp$ if and only if $\phi'(t)\leq 0$; $\phi'(t)=0$ implies $\phi''(t)=0$; and
$$
-\frac{1}{d}\;\;<\;\;\frac{t\phi''(t)}{\phi'(t)}\;\;<\;\;\frac{1}{d-2}
$$
for all $t\in \T=\{t:\phi'(t)< 0\}$. \end{thm}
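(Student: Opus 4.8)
The plan is to deduce the statement from Theorem~\ref{th:abdous} by optimizing over the scale matrix. First a harmless reduction: if the density \eqref{eq:density} is $\mtp$ then all partial correlations are nonnegative (Remark~\ref{rem:positivePC}), so it suffices to search for $\Sigma$ with $\Sigma^{-1}\in\im$, and then Theorem~\ref{th:abdous} applies. Normalize $K=\Sigma^{-1}$ to have unit diagonal, so $-K_{ij}=\rho_{ij\cdot V\setminus\{i,j\}}$, and set $\rho_*=\min_{i\ne j}(-K_{ij})\ge 0$. Theorem~\ref{th:abdous} says the density is $\mtp$ iff $\phi'(t)\le 0$, $\phi'(t)=0$ forces $\phi''(t)=0$, and \eqref{eq:abdous} holds for all $t\in\T$; the first two conditions do not involve $\Sigma$ and reappear verbatim in the theorem to be proved, so there is nothing to do for them. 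Hence it remains to show: given a generator satisfying those two conditions, there is a positive definite $M$-matrix $K$ with unit diagonal whose $\rho_*$ makes \eqref{eq:abdous} hold on $\T$ if and only if $-\tfrac1d<\tfrac{t\phi''(t)}{\phi'(t)}<\tfrac1{d-2}$ throughout $\T$.

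The first step is to determine the exact range of attainable values of $\rho_*$. On one side, the equicorrelation matrix $K$ with every off-diagonal entry equal to $-\rho$ is positive definite precisely when $\rho<1/(d-1)$, as computed in the discussion following Example~\ref{ex:alpha}; this realizes every $\rho_*\in[0,1/(d-1))$. On the other side, if $K$ is a positive definite $M$-matrix with unit diagonal and $\min_{i\ne j}(-K_{ij})=\rho_*$, then $0<\bs 1^T K\bs 1=d+\sum_{i\ne j}K_{ij}\le d-d(d-1)\rho_*$, forcing $\rho_*<1/(d-1)$. So the attainable $\rho_*$ are exactly those in $[0,1/(d-1))$, with the endpoint $1/(d-1)$ approached but never reached.

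The second step is a short monotonicity argument. Rewrite \eqref{eq:abdous} as
$$
-\frac{\rho_*}{1+\rho_*}\;\le\;\inf_{t\in\T}\frac{t\phi''(t)}{\phi'(t)}\qquad\text{and}\qquad\sup_{t\in\T}\frac{t\phi''(t)}{\phi'(t)}\;\le\;\frac{\rho_*}{1-\rho_*}.
$$
The maps $\rho\mapsto-\rho/(1+\rho)$ and $\rho\mapsto\rho/(1-\rho)$ are continuous on $[0,1/(d-1))$, strictly decreasing and strictly increasing respectively, with limits $-1/d$ and $1/(d-2)$ as $\rho\uparrow 1/(d-1)$. Both constraints therefore become easier to satisfy as $\rho_*$ increases, and in the limit the left-hand bound decreases to $-1/d$ while the right-hand bound increases to $1/(d-2)$. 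Consequently some admissible $\rho_*<1/(d-1)$ satisfies both inequalities if and only if $\inf_{t\in\T}\tfrac{t\phi''(t)}{\phi'(t)}>-1/d$ and $\sup_{t\in\T}\tfrac{t\phi''(t)}{\phi'(t)}<1/(d-2)$, which is the pair of strict inequalities in the theorem. (For $d=2$ the upper bound is vacuous and the lower bound specializes to $-1/2$, matching the final assertion of Theorem~\ref{th:abdous}; for $d\ge 3$, Example~\ref{ex:alpha}'s reminder that positive-definiteness of $K$ further caps how large $\rho_*$ can be is exactly what produces the cutoff $1/(d-1)$ used above.)

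The main obstacle I anticipate is the bookkeeping around strict versus non-strict inequalities together with the need for a \emph{single} $\rho_*$ to serve both halves of \eqref{eq:abdous}: one must check that raising $\rho_*$ relaxes the lower and the upper constraint simultaneously, and that excluding the endpoint $\rho_*=1/(d-1)$ is precisely what forces the theorem's inequalities to be strict (that endpoint being where the two bounds would reach $-1/d$ and $1/(d-2)$). The ``for all $t\in\T$'' strict bounds are to be read through $\inf$ and $\sup$ over $\T$, which is the only mildly delicate point; the remaining ingredients --- the reduction to Theorem~\ref{th:abdous} and the equicorrelation computation --- are routine.
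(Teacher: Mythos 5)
Your proof is correct and follows essentially the same route as the paper: both reduce the problem to Theorem~\ref{th:abdous}, identify the attainable values of $\rho_*$ for a unit-diagonal positive definite M-matrix as exactly $[0,1/(d-1))$ via the equicorrelation family, and then use the monotonicity of $\rho\mapsto-\rho/(1+\rho)$ and $\rho\mapsto\rho/(1-\rho)$ with limits $-1/d$ and $1/(d-2)$ as $\rho\uparrow 1/(d-1)$. The one genuine point of divergence is the necessity of $\rho_*<1/(d-1)$: the paper gets it by shrinking the off-diagonal entries to the equicorrelation matrix $K'=(1+\rho_*)\mathbb{I}-\rho_*\bs 1\bs 1^T$ (citing Proposition~6.1 of \cite{lauritzen2019maximum}), whereas your bound $0<\bs 1^T K\bs 1\le d-d(d-1)\rho_*$ is a more elementary substitute that uses only positive definiteness and the definition of $\rho_*$, not the M-matrix structure or any external result. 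Both arguments share the same mild imprecision in reading the theorem's pointwise strict bounds as strict bounds on $\inf_{t\in\T}$ and $\sup_{t\in\T}$ of $t\phi''(t)/\phi'(t)$; you at least flag this explicitly, which the paper does not.
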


\begin{proof}
Let $\beta_*=\inf_{t\in\T}\tfrac{t\phi''(t)}{\phi'(t)}$ and $\beta^*=\sup_{t\in T}\tfrac{t\phi''(t)}{\phi'(t)}$. First suppose that for some $K=\Sigma^{-1}$ the underlying elliptical distribution is $\mtp$. By Theorem~\ref{th:abdous} it follows that $\phi'(t)\leq 0$ and $\phi''(t)=0$ whenever $\phi'(t)=0$. Assume without loss that $K$ is normalized to have ones on the diagonal so that 	$K_{ij}=-\rho_{ij\cdot V_{\setminus \{i,j\}}}\leq 0$ for $i\neq j$. The fact that partial correlations must be necessarily nonnegative follows from Remark~\ref{rem:positivePC}. Since $K$ is an M-matrix, by Proposition~6.1 in \cite{lauritzen2019maximum} we can shrink each off-diagonal entry towards zero preserving positive-definitedness. In particular, the matrix $K'=(1+\rho_*)\mathbb I-\rho_*\bs 1\bs 1^T$ obtained from $K$ by replacing each off-diagonal entry with $-\rho_*$, where $\rho_*=\min_{i,j}\rho_{ij\cdot V\setminus \{i,j\}}$, must be positive definite. This matrix is positive definite if and only if $\rho_*<1/(d-1)$, which gives an upper bound on $\rho_*$ on the top of the two upper bounds implied by Theorem~\ref{th:abdous}, namely, $\rho_*\geq \beta^*/(1+\beta^*)$ and $\rho_*\geq -\beta_*/(\beta_*+1)$ {(use the fact that $\beta_*>-1/2$ by the last part of Theorem~\ref{th:abdous})}. The intersection of these three constraints is non-empty if and only if $\tfrac{1}{d-1}> \max\{\tfrac{\beta^*}{1+\beta^*},-\tfrac{\beta_*}{\beta_*+1}\}$. In other words, $\beta^*< 1/(d-2)$ and $\beta_*>-1/d$, which finished the proof of one implication.  

Now suppose $\phi'(t)\leq 0$ and $\phi''(t)=0$ whenever $\phi'(t)=0$. If, in addition, the third condition in the theorem is satisfied then $\beta_*>-1/d$ and $\beta^*<1/(d-2)$. Let $K$ be a matrix with ones on the diagonal and $-\rho_*$ on the remaining entries. If $0\leq \rho_*<1/(d-1)$ then $K$ is positive definite. If $\rho_*\nearrow 1/(d-1)$ then $\rho_*/(1-\rho_*)\nearrow 1/(d-2)$ and $-\rho_*/(1+\rho_*)\searrow -1/d$. Since $\beta_*>-1/d$ and $\beta^*<1/(d-2)$ we can always find $\rho_*$ close enough to $1/(d-1)$ so that 
$$-\frac{\rho_*}{1+\rho_*}\;\leq\;\beta_*\;\leq\;\beta^*\;\leq\; \frac{\rho_*}{1-\rho_*}.$$ 
By Theorem~\ref{th:abdous} the corresponding distribution is $\mtp$.  
\end{proof}

To illustrate this result consider the elliptically symmetric logistic distribution as defined in \cite{fang2018symmetric}, Section~3.5. The density generator satisfies $$\varphi_d(t)=\frac{e^{-t}}{(1+e^{-t})^2},\qquad \qquad \frac{t\phi''(t)}{\phi'(t)}=\frac{2t}{e^t-e^{-t}}\in (0,1].$$
Theorem~\ref{th:abdous} gives that a bivariate logistic distribution is $\mtp$ if and only if $\rho_{12}\geq 1/2$. However, if $d\geq 3$, Theorem~\ref{thm:necessary_mtp2} implies that there are no $\mtp$ distributions of this form. 

{To summarize, our results show that although there are some non-normal elliptical distributions that are $\mtp$, the imposed constraints can be quite severe, particularly as the dimension $d$ grows. We additionally showed that popular elliptical distributions such as the t, Laplace and most Kotz-type distributions in Example \ref{ex:alpha} cannot be $\mtp$. These findings highlight the need to define alternative measures of positive association, such as the PPGs in Section \ref{sec;impossible}.}

\section{Examples}\label{sec:examples}

\begin{figure}
\begin{center}
\begin{tabular}{cc}
\includegraphics[width=0.5\textwidth]{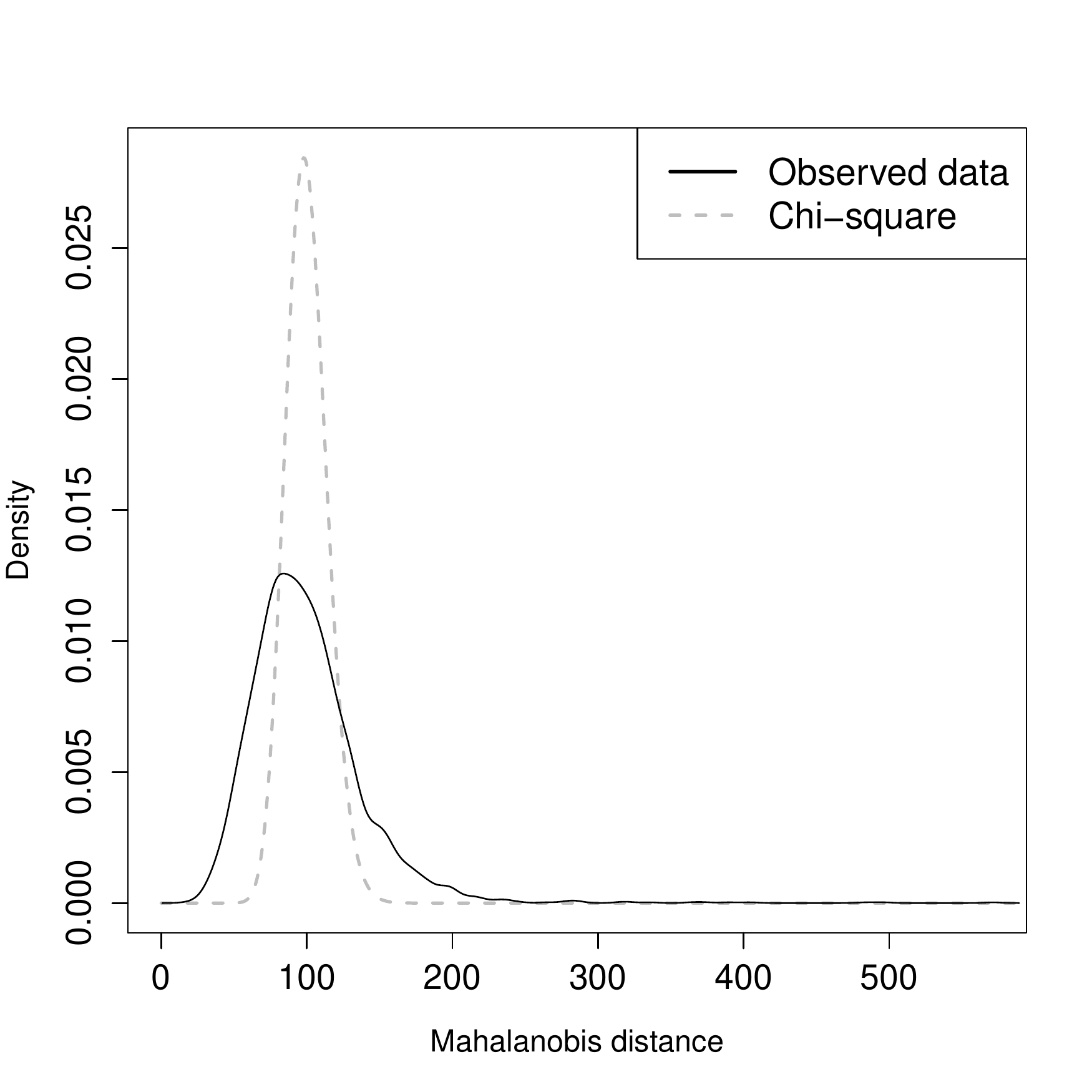} &
\includegraphics[width=0.5\textwidth]{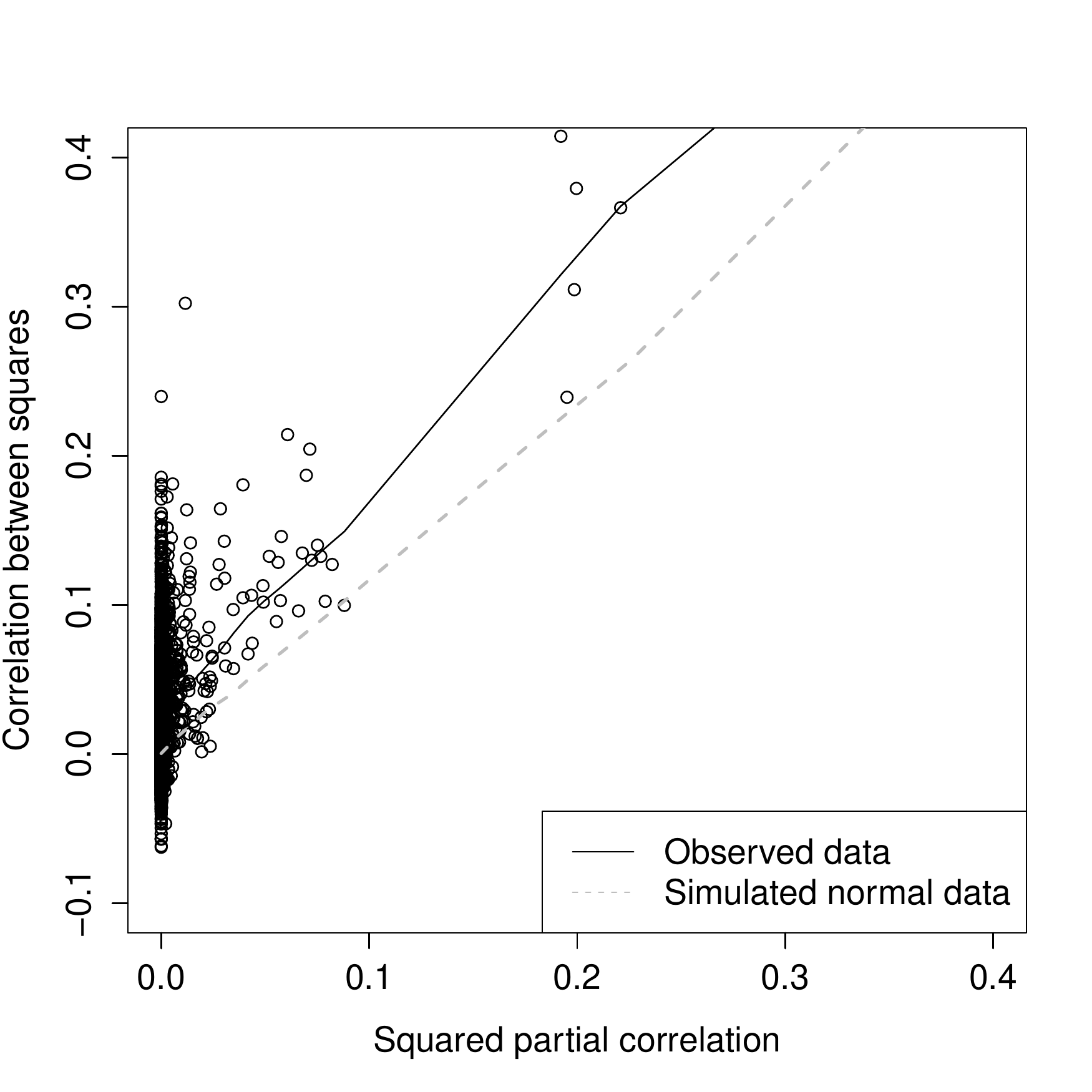} \\
\includegraphics[width=0.5\textwidth]{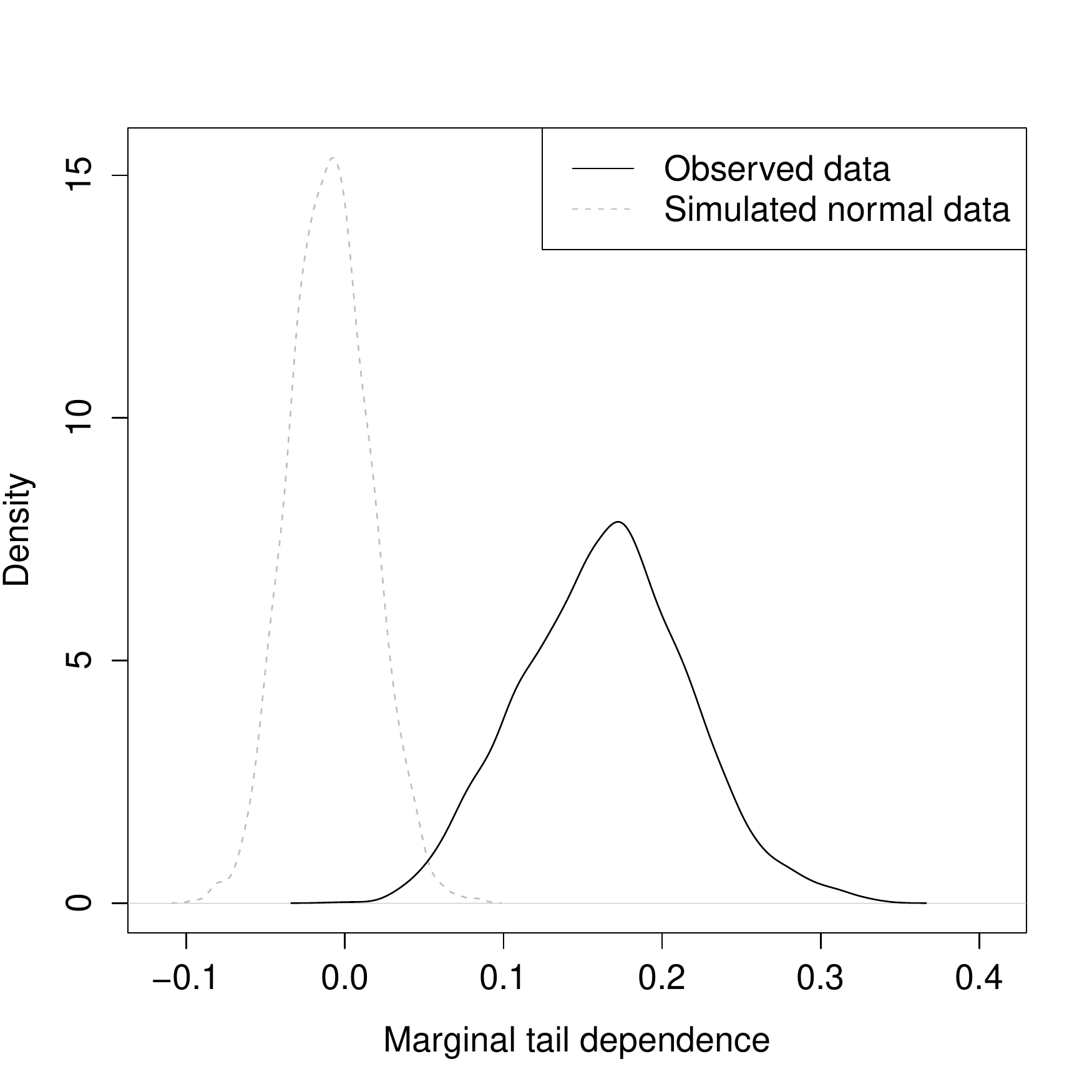} &
\includegraphics[width=0.5\textwidth]{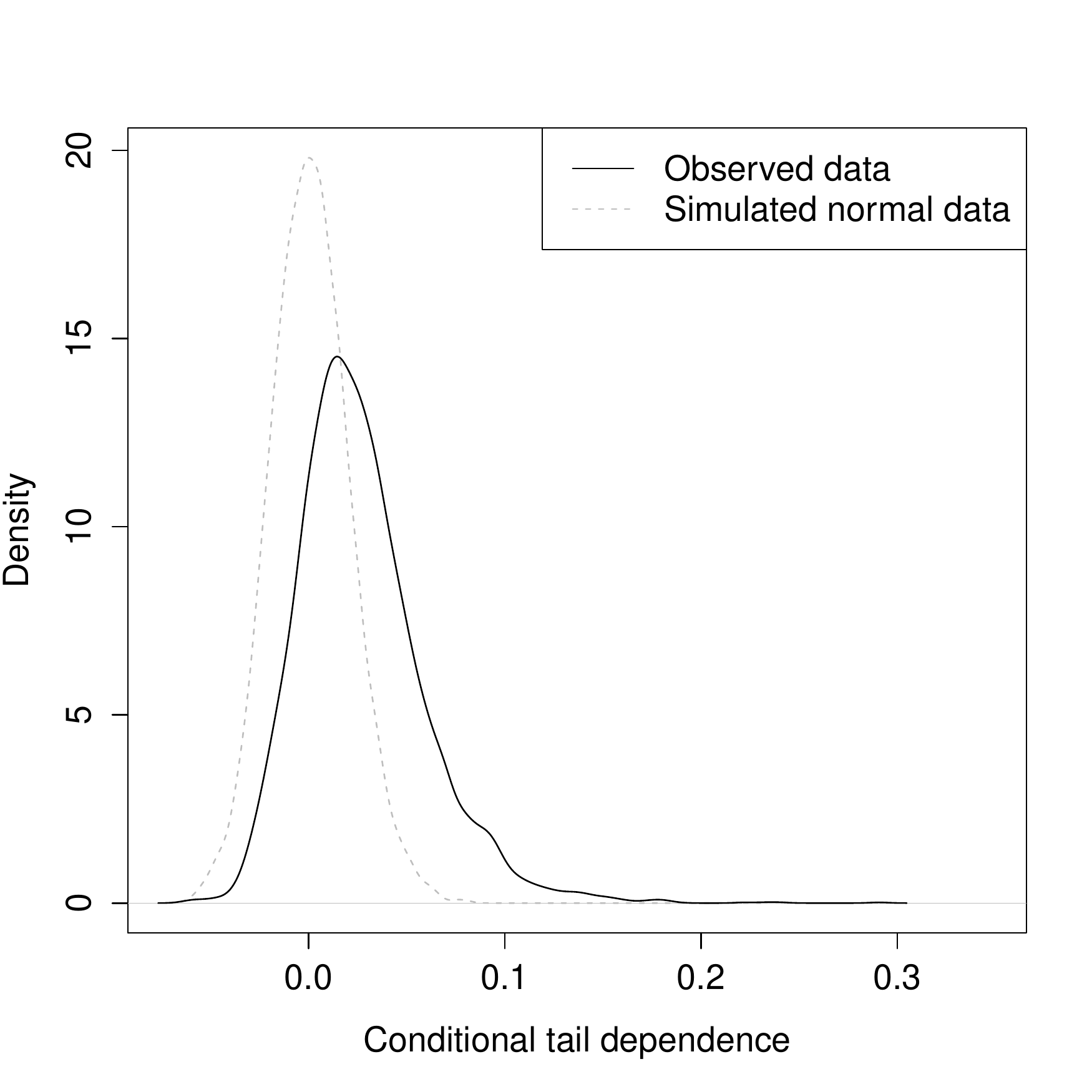} \\
\end{tabular}
\end{center}
\caption{S\&P500 data. Top left: Mahalanobis distances and $\chi^2_{100}$ density.
Top right: conditional tail dependence $\hat{\theta}_{ij \cdot V\setminus \{i,j\}}$ versus normal prediction $\hat{\rho}_{ij \cdot V \setminus \{i,j\}}^2$.
Bottom: distribution of $\hat{\theta}_{ij} - \hat{\rho}_{ij}^2$ and $\hat{\theta}_{ij \cdot V\setminus \{i,j\}} - \hat{\rho}_{ij \cdot V \setminus \{i,j\}}^2$}
\label{fig:sp500}
\end{figure}

We illustrate the application of transelliptical PG and PPG and the interpretation afforded by our characterizations with S\&P500 stock market data. The R code to reproduce our analyses is provided as supplementary material.
We downloaded the daily log-returns of S\&P500 stocks for the 10-year period ranging from 2010-04-29 to 2020-04-14 ($n=2,514$ observations). For illustration we selected the first $d=100$ stocks, hence the graphical model has 4,950 potential edges.
We used the R package huge \citep{zhao2012huge} to apply univariate transformations aimed at improving the marginal normal fit (function \texttt{huge.npn}). 
Despite these transformations, we observed departures from multivariate normality. 
Let the observed and transformed $n \times d$ data matrices be  $Y$ and $X$ (respectively), both with zero column means and unit variances. The empirical distribution of the Mahalanobis distances $(y_{i1},\ldots,y_{id}) S^{-1} (y_{i1},\ldots,y_{id})^T$, where $S$ is the sample covariance, had significantly thicker tails than the $\chi_d^2$ expected under multivariate normal data and $S=\Sigma$ (Figure \ref{fig:sp500}, top left).
{This departure from normality motivates considering other elliptical models.}

We studied the dependence structure in these data via several models.
First we fit a transelliptical model $\mbox{TE}(0,\Sigma)$ to $Y$, where $\Sigma$ is estimated by first computing Kendall's $\tau$ and then exploiting their connection to $\Sigma$ in Lemma \ref{lem:tau}. This procedure can be performed with option \texttt{npn.func = "skeptic"} in function \texttt{huge.npn}, see \cite{liu2012transelliptical} for details. 
Second, we also fit an elliptical model $E(0,\Sigma)$ to $X$.
In both models we estimated $K=\Sigma^{-1}$ via graphical LASSO \citep{friedman2008sparse}, {where the} regularization parameter was set via the EBIC (\cite{chen:2008}, function \texttt{huge.select}), in the transelliptical case using the pseudo-likelihood defined by $\hat{\tau}_{ij}$, see \cite{foygel2010extended}.
The transelliptical model is in principle more robust, in that it does not require estimating the marginal transformations. However both models provided similar results: the Spearman correlation between the estimated $\hat{K}_{ij}$ was 0.911, the selected PGs agreed in 93.0\% of the 4,950 edges, and there were no disagreements in the signs of $\hat{K}_{ij}$ for any $(i,j)$.

To illustrate the interpretation of the PG implied by $\hat{K}$, relative to a Gaussian graphical model, we focus on the elliptical model for $X$.
Figure \ref{fig:sp500} (bottom left) shows that the marginal tail dependence $\hat{\theta}_{ij}$ in \eqref{eq:corrsq} is significantly larger than the $\hat{\rho}_{ij}^2$ expected under normality. The magnitude of these departures is practically significant. For comparison the figure also displays $\hat{\theta}_{ij}$ estimated from simulated Normal data, with zero mean and sample covariance matching that of $X$.

In practical terms, $\theta_{ij}$ measures the predictability of a variable's variance (also called volatility) from that of other variables. A natural question is what predictability remains after conditioning upon other variables, i.e. what is the {\it conditional} tail dependence $\theta_{ij \cdot V \setminus \{i,j\}}$  in \eqref{eq:corrsq_partial}. 
To address this question for each variable pair $(i,j)$ we computed the non-parametric estimate
$$
\hat{\theta}_{ij \cdot V \setminus \{i,j\}}= {\rm corr}(e_i^2,e_j^2 \mid X_{(ij)})
$$
where $e_i= x_i - \hat{\mu}_{i \mid V\setminus \{i,j\}}$, $x_i$ is the $i$-{th} column in $X$ and $\hat{\mu}_{i \mid V\setminus \{i,j\}}$ the least-squares prediction given $X_{(ij)}$ (analogously for $e_j$).
These estimates were significantly larger than the $\hat{\rho}^2_{ij \cdot V\setminus \{i,j\}}$ expected under normality (Figure \ref{fig:sp500}, bottom right).
As a further check, from \eqref{eq:corrsq_partial} the elliptical model {(more specifically, the scale mixture of normals sub-family)} predicts $\theta_{ij \mid V\setminus \{i,j\}}$ to be linear in $\rho^2_{ij \mid V\setminus \{i,j\}}$. Figure \ref{fig:sp500} (top right) suggests that they are indeed roughly linearly related. Admittedly one never expects a model to describe the data perfectly, but the elliptical model appears reasonable to study volatility in these data.

The estimated partial correlation graph had 1,600 out of the 4,950 edges. Our results from Section~\ref{sec:graphical} help strengthen the interpretation of the missing edges, e.g. $\hat{K}_{ij}=0$ suggests that conditional on $X_{(ij)}$ one cannot predict the variance, asymmetry or kurtosis in $x_j$ linearly from $x_i$.
Further it also implies zero Kendall's conditional tau between increasing transforms of $x_i$ and $x_j$, e.g. if daily returns are not conditionally positively/negative correlated (according to Kendall's tau) then neither are log-returns.

A quite interesting point is that among the $1,600$ edges the estimated partial correlations were positive for $1,481$ and negative for only $119$ edges. That is, the partial correlation graph was very close to being a PPG; see \cite{agrawal2019covariance} for a discussion why this may be frequently encountered in stock data, and \cite{epskamp2018tutorial,lauritzen2019total} for examples in Psychology. 
To compare the PPG fit with our earlier graphical LASSO fit we estimated the precision matrix under the constraint that $K$ is an M-matrix, using the R package \texttt{mtp2} available at GitHub \cite[Algorithm 1]{lauritzen2019maximum}\footnote{{Package \texttt{golazo} that recently appeared on GitHub offers a more flexible and scalable way of doing this and other related computations. The relevant function is \texttt{positive.golazo(S,rho=Inf)}, where $S$ is the sample covariance matrix. See \cite{lauritzen2020locally} for more details.}}. The maximized constrained log-likelihood was substantially higher than for the graphical LASSO fit {($-266,361$ versus $-268,773$)} and the graph was sparser ($1,228$ versus $1,600$ edges), hence the EBIC (and any other $L_0$ model selection criteria) strongly favored the PPG model.
Note that, from its Lagrangian interpretation, the graphical LASSO constrains the size $|\hat{K}_{ij}|$. In contrast the M-matrix constraint allows for arbitrarily large $|\hat{K}_{ij}|$, provided $\hat{K}_{ij} \leq 0$. That is, {the graphical LASSO and the PPG} constraints induce quite different regularization and the latter appears more appropriate for these S\&P500 data, illustrating the potential value of positivity constraints in certain applications.

The selected graph being a PPG strengthens its interpretation. By Proposition~\ref{prop:tpell}, the finding suggests that all possible partial correlations are positive regardless of the conditioning set, and that Simpson's paradox does not occur in these data, i.e. stocks with zero marginal correlation also have zero partial correlation. By our earlier discussion, this implies that if $\rho_{ij}=0$  marginally then $x_i$ is uncorrelated with higher moments of $x_j$, both marginally and conditionally on $X_{(ij)}$. Further, the conditional expectation of $x_i$ can only {be} increasing as a function of other variables (or increasing transformations thereof), and missing edges indicate the lack of such association.

\section{Discussion}

When studying multivariate dependence in applications it is often convenient to strike a balance between models that equip strong theoretical properties (e.g. Gaussian, non-paranormal, $\mtp$ and CI classes) but impose potentially restrictive conditions, and models that are more flexible but do not provide such strong characterizations and/or lead to complex interpretations.
We studied a natural strategy based on the transelliptical family and partial correlation graphs, {including many copula models that are popular in applications}. We showed that the interpretation remains simple yet goes far beyond the regular linear dependence. 

%The aim of this paper was to study dependence structures in elliptical and transelliptical distributions. The main motivation was to understand better the interpretation of the partial correlation graphs for (trans)elliptical distributions. We showed that this interpretation goes far beyond the regular linear dependence. We hope that these findings will strengthen the analysis done with (trans)elliptical graphical models. 

This work is also relevant in the context of Gaussian graphical models. Although the partial correlation graph in the Gaussian case translates into conditional independence statements, it is important to understand how robust is this interpretation with respect to the Gaussianity assumption. Our analysis shows that in the elliptical case a lot of this dependence information is retained. 
We also illustrate how simple tail dependence measures, like the one in (\ref{eq:corrsq}), characterize the Gaussian distribution within the {scale mixture of normals} family and can help assess whether transelliptical class is useful to capture second-order dependence (variance) dependencies in the data.

An important part of this paper is the study of positive dependence. The notion of positivity can be quite useful in regularizing inference relative to unrestricted penalized likelihood, as we illustrated in the S\&P500 example. However, we also showed that strictly speaking some standard notions of positive dependence are meaningless for structural learning in elliptical partial correlation graphs. 
One of our main contributions is a remarkable result that characterizes $\mtp$ elliptical distributions and shows that $\mtp$ becomes very restrictive in high dimensions. It is therefore important to study relaxations such as positive elliptical distributions that impose all partial correlations to be nonnegative. We showed that this family retains strong positive dependence properties that are important from the applied point of view. 

In conclusion, we hope that our results help to motivate the study of other suitable relaxations of Gaussianity and positivity in graphical models, as well as strengthen the use of transelliptical graphical models in practice.

%\subsection{Totally positive elliptical distributions}
%
% The fact that the boundary of the set of M-matrices gives zero restrictions on $K$ was a driving idea behind using the $\mtp$ assumption to obtain sparse representations of the data in the Gaussian case \cite{lauritzen2019maximum,pavez2018learning,slawski2015estimation}. Recently \cite{wang2019learning} showed that there is a way of learning consistently the support of $K$ without introducing any regularization. 

%Compare efficiency of the Spearman vs Kendall depending on the tails.
%
%\subsection{No positivity constraints}
%
%THREE APPROACHES - we can rely on transelliptical (loosing efficiency in estimating the correlation matrix) SKEPTIC, follow the standard Gaussian procedure with better rates of leanring the correlation matrix (some guarantees?), 
%
%elliptical 
%
%
% Learning the partial correlation graph $G(K)$ from the data can be done by incorporating a graphical lasso penalty exactly in the same way as it was proposed for the t-distribution in \cite[Section~2]{finegold2011robust}. We will write more after studying more carefully \cite{barber2018rocket}.
%
%
%Here we can first briefly overview what was done in \cite{barber2018rocket} for transelliptical distributions and in \cite{bhadra2018inferring} for scale mixtures. Then we can mention nonparanormal etc In the end the main point may be doing total positivity.
%
%\subsection{Scale mixture of normals}
%
%We could learn nonparametically the distribution of $\tau$ and use form of the EM algorithm. 
%
%\subsection{Learning under positivity}
%
%
%
%\appendix

\section*{Acknowledgements}

We thank Yuhao Wang, Frank R\"{o}ttger, and Ludger R\"{u}schendorf for helpful remarks. {We also thank  the reviewer and the AE of  the first version of this manuscript for pointing to us a mistake in the original proof of Lemma~\ref{lem:anyf} and other inaccuracies}.

\bibliography{bib_mtp2}
\bibliographystyle{authordate1}

\end{document}